\tikzset{%
    symbol/.style={%
        draw=none,
        every to/.append style={%
            edge node={node [sloped, allow upside down, auto=false]{$#1$}}}
    }
}
\newcommand{\Rel}{\mbox{\bf Rel}}
\newcommand{\RelMon}{\mbox{\bf RelMon}}
\newcommand{\RelFrob}{\mbox{\bf RelFrob}}
\newcommand{\Hom}{\mbox{\rm Hom}}
\newcommand{\eSSet}{\mathrm{Set}^\epsilon_\Delta}
\newcommand{\SSet}{\mathrm{Set}_\Delta}
\newcommand{\eHorn}{\epsilon-\mathrm{Horn}}
\newcommand*\Sim{\includegraphics[scale=0.26]{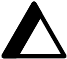}}
\newcommand{\eps}{{\bf \epsilon}}
\DeclareRobustCommand{\ELambda}{\text{\reflectbox{$\bf\Lambda$}}}
\def\dashmapsto{\mapstochar\dashrightarrow}
\tikzset{LA/.style = {
                      line width=#1, -{Straight Barb[length=3pt]}},
         LA/.default=1pt
        }
\newcommand{\keywords}[1]{\par\noindent\textbf{Keywords:} #1}
\newcommand{\msc}[1]{\par\noindent\textbf{MSC 2020:} #1}
\newtheorem{theorem}{Theorem}[section]
\newtheorem{lemma}[theorem]{Lemma}
\newtheorem{definition}[theorem]{Definition}
\newtheorem{example}[theorem]{Example}
\newtheorem{remark}[theorem]{Remark}
\newtheorem{corollary}[theorem]{Corollary}
\newtheorem{proposition}[theorem]{Proposition}
\title{Simplicial Approach to Frobenius Algebras in the Category of Relations}
\author{Dominik Lachman\thanks{Department of Algebra and Geometry,\\
Palacký University Olomouc,\\
17. Listopadu 12,\\
Czech Republic,\\
\texttt{dominik.lachman@upol.cz}}%
\space\thanks{Author acknowledges the support by the Czech Science Foundation (GAČR): project 24-14386L}} 
\begin{document}

\maketitle

\begin{abstract}
    Frobenius algebras in the category of sets and relations
($\Rel$) serve as a unifying framework for various algebraic and combinatorial structures, including groupoids, effect algebras, and abstract circles. Recently, a nerve construction of simplicial sets for Frobenius algebras in $\Rel$ has been introduced. In this work, we investigate the lifting properties of these simplicial sets, linking them to the algebraic properties of Frobenius algebras. We introduce $\epsilon$-simplicial sets -- simplicial sets with marked edges -- that enable the representation of a broader class of structures, such as test spaces from quantum logic. Our main results focus on weakly saturated classes generated by cofibrations, corresponding to specific lifting problems. Furthermore, we provide a characterization of Frobenius algebras in $\Rel$ within the framework of $\epsilon$-simplicial sets. These findings lay the groundwork for the development of a convenient model structure in future research.
\end{abstract}

\keywords{Frobenius algebras, effect algebras, pseudo effect algebras, simplicial sets}
\msc{06C15, 18G30, 18M05}

\maketitle

\section{Introduction}\label{sec1}
The category of sets and relations, denoted by $\Rel$, is studied in connection with a program to translate foundational concepts from quantum mechanics into the language of category theory. With additional structure, $\Rel$ is 
a dagger compact category (see~\cite{HV}), providing a simplified framework for studying categorical analogues of quantum mechanical principles. Within this setting, Frobenius algebras play a crucial role. A Frobenius algebra in a category is an object equipped with compatible monoid and comonoid structures that satisfy the Frobenius identity. In the case of $\Rel$, the category of Frobenius algebras, denoted $\RelFrob$, contains several important structures, in particular, \emph{groupoids} (see~\cite{CCH}) and \emph{effect algebras} (see~\cite{PS}).

Another motivation for studying Frobenius algebras in
$\Rel$ arises from their connections to \emph{2-dimensional quantum field theories} (see \cite{JK}). Within this line of research, Mehta and Zhang~\cite{MZ} associate a simplicial set with each Frobenius algebra in $\Rel$ (see also~\cite{CKM} and \cite{CMS}). The construction parallels the classical nerve construction for small categories, particularly groupoids, where simplicial sets serve as a combinatorial topological model for categorical structures. Notably, simplicial sets called \emph{quasi-categories} have a central role in modeling higher categories, supported by a rich and well-established homotopy theory~\cite{L}.
This interplay highlights the potential of using simplicial sets to explore the algebraic and combinatorial properties of potentially higher Frobenius algebras internal to categories of interest.

In contrast to groupoids, representing effect algebras as simplicial sets appears to be a novel idea, although not entirely unprecedented. 
The particular case of \emph{orthomodular posets} has been studied in relation to the category of \emph{test spaces} introduced by Foulis and Randall (see, e.g., \cite{FPR}), which are used to study phenomena such as \emph{contextuality}, \emph{locality}, and \emph{non-locality}.
As noted by Foulis in~\cite{F}, a test space is essentially a combinatorial-simplicial complex where some simplices (so-called tests) are marked. 
Moreover, it has been shown~\cite{SU, Je} that the category of effect algebras can be fully and faithfully embedded into a certain category of presheaves over finite Boolean algebras.

In the study of cohomology of effect algebras~\cite{R}, it is observed that effect algebras and the so-called \emph{abstract circles} share certain formal properties, leading to the introduction of \emph{effect algebroids} as a unifying generalization. Notably, all these structures can be organized as Frobenius algebras in $\Rel$ (Example~\ref{ex:EAlg}), suggesting that $\RelFrob$ provides a natural setting for a broad class of algebraic-combinatorial structures.

This article builds on the results of~\cite{MZ}. The primary goal of this article is to translate the essential algebraic properties of a Frobenius algebra $\mathcal{F}$ into the combinatorial framework of the simplicial set $N(\mathcal{F})$. 
For instance, we describe the \emph{lifting properties} of $N(\mathcal{F})$ corresponding to the Frobenius identity, rotation, cancellation property, and so on. Then we investigate the \emph{weakly saturated classes} generated by the lifting problems in question. We establish a foundation that can be utilized in future work to construct a suitable model structure, enabling the representation of (potentially higher) Frobenius algebras as particular fibrant objects.

Although some of our results are parallel to those in~\cite{MZ}, our approach is different in an essential way.
In~\cite{MZ}, Frobenius algebras are characterized through simplicial sets with a $\mathbb{Z}$-action on simplices, referred to as \emph{rotation}, thus forming \emph{paracyclic sets}.  In contrast, our approach considers specific simplicial sets in which certain edges are marked (corresponding to counit elements); we call these \emph{$\epsilon$-simplicial sets}. This framework offers several advantages: working with $\epsilon$-simplicial sets is arguably simpler than working with paracyclic sets. Additionally, the category of $\epsilon$-simplicial sets consists of presheaves over a Reedy category, allowing us to apply related machinery in future research.

In the case of $\epsilon$-simplicial sets, the rotation ($\mathbb{Z}$-action) on simplices is captured via lifting properties, suggesting that it can be defined up to a form of 'homotopy' equivalence. This perspective is particularly advantageous for capturing \emph{test spaces}, where each element - called an event - may have multiple rotations (called local orthocomplements in this context), all of which represent the same element within the corresponding ``homotopy'' effect algebra. Given this inherent ambiguity, it is natural to organize test spaces as $\epsilon$-simplicial sets rather than paracyclic sets.


The article is organized as follows. In Section 2, we introduce the definition of Frobenius algebras in $\Rel$ and state some of their basic properties. Section 3 provides a brief overview of simplicial sets and describes the nerve construction of a monoid in $\Rel$. 
In Section 4, we present several examples of algebraic properties that can be captured using extension properties. Section 5 introduces the concept of $\epsilon$-simplicial sets and $\epsilon$-horns, translating essential properties of Frobenius algebras into simplicial combinatorics. Section 6 is the technical core of the article. We give several theorems that focus on a weakly saturated class generated by the simplicial morphisms in question. We explore the relationships between the Frobenius identity and associativity. In Section 7, we characterize the $\epsilon$-simplicial sets that arise from the nerve functor. Finally, in the concluding section, we discuss the aims of future research.

\section{Frobenius algebras in $\Rel$}
In this article, we consider $\Rel$ as a monoidal category, where objects are sets, arrows are binary relations, and the monoidal structure is given by the cartesian product $\times$ and any one-element set $\{\bullet\}$ taken as the unit. In particular, we can use string diagrams to formulate equations. We assume basic knowledge of string diagrams. For more details, see~\cite{HV}.

In this section, we introduce the concept of Frobenius algebras in $\Rel$ and give some important examples.

\begin{definition}\label{def:monoid}
    Let $X$ be a set and $\mu\colon X\times X\dashrightarrow X$, $\eta\colon\{\bullet\}\dashrightarrow X$ be relations considered as arrows in $\Rel$. We say that $(X;\mu,\eta)$ is a monoid if it satisfies 
    \begin{enumerate}[(i)]
        \item $\mu\circ (\mu\times\mathrm{id})=\mu\circ(\mu\times\mathrm{id})$ (associativity),
        \item $\mu\circ (\eta\times \mathrm{id})=\mathrm{id}=\mu\circ(\mathrm{id}\times \eta)$ (unit).
    \end{enumerate}
    In (ii), we identify $\eta\times\mathrm{id}$ with the relation $X\cong \{\bullet\}\times X\dashrightarrow X\times X$. Similarly, in the case of $\mathrm{id}\times \eta$.
\end{definition}
There are various concepts of homomorphisms between monoids in $\Rel$. In this article, 
We call a mapping $h\colon X_1\rightarrow X_2$ a homomorphism $(X_1,\mu_1,\eta_1)\rightarrow (X_2,\mu_2,\eta_2)$ if, for each $a,b,c\in X_1$, we have
\begin{enumerate}[(i)]
    \item $a\in \eta_1 \implies h(a)\in\eta_2$,
    \item $\mu_1\colon(a,b)\dashmapsto c \implies \mu_2\colon(h(a),h(b))\dashmapsto h(c)$.
\end{enumerate}
The resulting category is denoted by $\RelMon$.

Analogously, we define a comonoid in $\Rel$ as a monoid in $\Rel^{op}$.
\begin{definition}
    Let $X$ be a set, and let $\delta\colon X\dashrightarrow X\times X$, $\epsilon\colon X\dashrightarrow\{\bullet\}$ be relations. We say a triple $(X;\delta,\epsilon)$ is a comonoid in $\Rel$ if
\begin{enumerate}[(i)]
        \item $(\delta\times\mathrm{id})\circ \delta= (\mathrm{id}\times \delta)\circ\delta$ (co-associativity),
        \item $(\epsilon\times \mathrm{id})\circ \delta=\mathrm{id}=(\mathrm{id}\times \epsilon)\circ\delta$ (co-unit).
    \end{enumerate}
\end{definition}
We will picture the algebraic data $\mu$, $\eta$, $\delta$, and $\epsilon$ using string diagrams as in Figure~\ref{fig:01}.
\begin{figure}[H]
    \centering
    \includegraphics[scale=0.6]{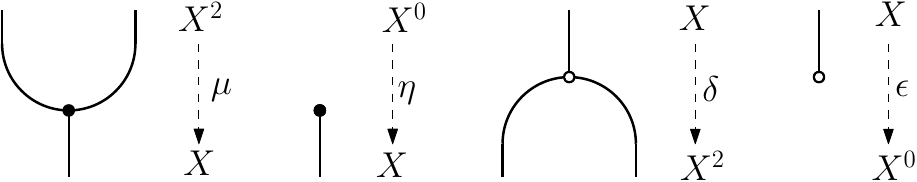}
    \caption{String diagrams for a Frobenius structure}
    \label{fig:01}
\end{figure}
In a monoid, every element has a unique left and right unit.
\begin{lemma}[\cite{MZ}, Lemma 3.15]\label{lem:sourceAndTarget}
    Let $(X;\mu,\eta)$ be a monoid in $\Rel$. Then for each $a\in X$, there are unique unit elements $s(a),t(a)\in\eta$ such that $\mu\colon (t(a),a)\dashmapsto a$ and $\mu\colon(a,s(a))\dashmapsto a$.
\end{lemma}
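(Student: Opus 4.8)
The plan is to read existence straight off the unit axiom and to extract uniqueness from associativity together with a ``degeneracy'' consequence of the unit axiom.

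First I would unfold the two halves of axiom (ii) of Definition~\ref{def:monoid} into statements about elements. The equality $\mu\circ(\eta\times\mathrm{id})=\mathrm{id}$ says that for every $a\in X$ the set $\{c\in X:\exists e\in\eta,\ \mu\colon(e,a)\dashmapsto c\}$ equals $\{a\}$; this splits into (L1) there exists $e\in\eta$ with $\mu\colon(e,a)\dashmapsto a$, and (L2) whenever $e\in\eta$ and $\mu\colon(e,a)\dashmapsto c$, then $c=a$. Symmetrically, $\mu\circ(\mathrm{id}\times\eta)=\mathrm{id}$ yields (R1) there exists $e\in\eta$ with $\mu\colon(a,e)\dashmapsto a$, and (R2) whenever $e\in\eta$ and $\mu\colon(a,e)\dashmapsto c$, then $c=a$. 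Existence of $t(a)$ and $s(a)$ is then precisely (L1) and (R1).

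For uniqueness of $t(a)$, suppose $t_1,t_2\in\eta$ both satisfy $\mu\colon(t_i,a)\dashmapsto a$. Apply associativity (axiom (i)) to the triple $(t_1,t_2,a)$. On the right-associated side, $\mu\colon(t_2,a)\dashmapsto a$ followed by $\mu\colon(t_1,a)\dashmapsto a$ shows that $a$ lies in $\bigl(\mu\circ(\mathrm{id}\times\mu)\bigr)(t_1,t_2,a)$, hence also in $\bigl(\mu\circ(\mu\times\mathrm{id})\bigr)(t_1,t_2,a)$. Thus there is $w\in X$ with $\mu\colon(t_1,t_2)\dashmapsto w$ and $\mu\colon(w,a)\dashmapsto a$. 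Since $t_2\in\eta$, clause (R2) forces $w=t_1$; since $t_1\in\eta$, clause (L2) forces $w=t_2$; hence $t_1=t_2$. The argument for $s(a)$ is the mirror image: given $s_1,s_2\in\eta$ with $\mu\colon(a,s_i)\dashmapsto a$, apply associativity to $(a,s_1,s_2)$, obtain $w$ with $\mu\colon(s_1,s_2)\dashmapsto w$, and conclude $w=s_2$ from (L2) and $w=s_1$ from (R2).

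I expect the only delicate point to be bookkeeping rather than mathematics: one must keep the order of composition in $\Rel$ straight when unfolding the axioms, and -- the single genuinely nontrivial observation -- note that $\mu\colon(t_1,t_2)$ need not be defined a priori (in the groupoid example, the product of two distinct identities is undefined), so associativity is essential precisely to produce the witness $w$ showing this product exists. Once $w$ is in hand, the two degeneracy clauses (L2) and (R2) close the argument at once.
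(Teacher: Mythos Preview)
Your argument is correct. The unfolding of the unit axiom into (L1), (L2), (R1), (R2) is accurate, and the uniqueness step---using associativity on $(t_1,t_2,a)$ to manufacture a witness $w$ with $\mu\colon(t_1,t_2)\dashmapsto w$, then pinning $w=t_1$ via (R2) and $w=t_2$ via (L2)---is clean and complete; the mirror case for $s(a)$ works the same way.

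There is nothing to compare against in the present paper: the lemma is stated with a citation to \cite{MZ} (Lemma~3.15) and no proof is reproduced here. Your proof is in fact the standard one and is essentially what appears in that reference.
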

Given an element $a$ of a monoid in $\Rel$, we will call $t(a)$ the \emph{target} of $a$ and $s(a)$ the \emph{source} of $a$.
\begin{definition}
A Frobenius algebra in $\Rel$ is an algebra $\mathcal{F}=(X;\mu,\eta,\delta,\epsilon)$ where $(X;\mu,\eta)$ is a monoid in $\Rel$, $(X;\delta,\epsilon)$ is a comonoid in $\Rel$, and the Frobenius identity holds
\begin{align}\label{eq:frobId}
    (\mu,\mathrm{id})\circ(\mathrm{id},\delta)=\delta\circ\mu=(\mathrm{id},\mu)\circ(\delta,\mathrm{id}).
\end{align}
\end{definition}
\begin{figure}[H]
    \centering
  \includegraphics[scale=0.6]{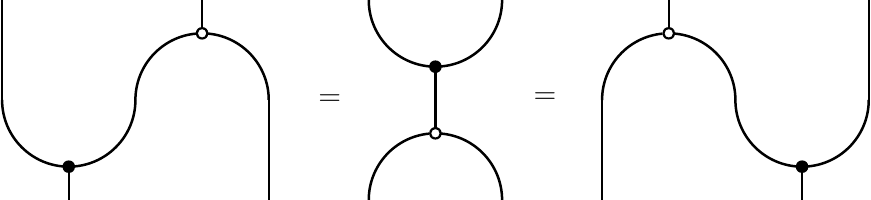}
  \caption{Frobenius identity}
\end{figure}
\begin{remark}\label{rem:Asoc}
The rules of Frobenius identity, unit, and counit together imply associativity and co-associativity (see \cite{MZ}, Lemma~3.12). Hence, we may leave the (co)associativity rule out from the definition of the Frobenius algebra.
\end{remark}
We call a mapping $h\colon\mathcal{F}_1\rightarrow \mathcal{F}_2$ between two Frobenius algebras a \emph{Frobenius algebra homomorphism} if it is a monoid and a comonoid homomorphism. We denote the resulting category by $\RelFrob$.
    \begin{example}[\cite{CCH}]
    A groupoid is a small nonempty category where every morphism is an isomorphism. Given a groupoid $\mathcal{G}$, we can organize $\mathcal{G}$ as a Frobenius algebra. Let $X$ be a set of all morphisms in $\mathcal{G}$, and for each $f,g,h\in X$, we set
    \begin{itemize}
        \item $\mu\colon (f,g)\dashmapsto h$ iff $f\circ g= h$ iff $\delta\colon h \dashmapsto (f,g)$,
        \item $f\in \eta$ iff $\mathcal{F}$ is an identity iff $f\in\epsilon$.
    \end{itemize}
\end{example}
\begin{example}[\cite{PS}]
    An effect algebra is a partial algebra $E=(E;\oplus,',0,1)$, where 
    $(E;\oplus,0)$ is a partial commutative monoid, $'$ is a unary operation, and $1$ is a constant so that:
    \begin{enumerate}[(i)]
        \item For each $a\in E$, $a'$ is a unique element such that $a\oplus a'=1$.
        \item For each $a\in E$, if $a\oplus 1$ is defined, then $a=0$.
    \end{enumerate}
We can organize an effect algebra $E$ as a Frobenius algebra as follows. For each $a,b,c\in E$, we set
\begin{itemize}
    \item $\mu\colon (a,b)\dashmapsto c$ iff $a\oplus b=c$,
    \item $a\in\eta$ iff $a=0$,
    \item $\delta\colon a\dashmapsto (b,c)$ iff $a=(b'\oplus c')'$,
    \item $a\in\epsilon$ iff $a=1$.
\end{itemize}
\end{example}
The following structure is introduced in~\cite{R} as a common generalization of effect algebras and so-called abstract circles:
\begin{example}\label{ex:EAlg}
    An effect algebroid consists of a non-empty set $P$ (of points) and for each $x,y,z\in P$ the following data: a set $\mathrm{Hom}(x,y)$ (of segments), mappings 
    $$\cup\colon\mathrm{Hom}(x,y)\times\mathrm{Hom}(y,z)\rightarrow \mathrm{Hom}(x,z), \quad(-)^{\perp}\colon \mathrm{Hom}(x,y)\rightarrow\mathrm{Hom}(y,x),$$ and $0_x,1_x\in\mathrm{Hom}(x,x)$, such that:
    \begin{itemize}
        \item All segments $\bigcup_{x,y}\mathrm{Hom}(x,y)$ together with $\cup$ and $0_x$'s naturally form a partial monoid (in $\Rel$).
        \item For each $a\in\mathrm{Hom}(x,y)$ and $b\in\mathrm{Hom}(y,x)$, we have
        $$a\cup b=1_x\Leftrightarrow a=b^\perp\Leftrightarrow b=a^\perp.$$
        \item For each $a\in\mathrm{Hom}(x,y)$, if $1_x\cup a$ or $a\cup 1_y$ is defined, then $x=y$ and $a=0_x$.
    \end{itemize}
    It is straightforward to prove that the following data lead to a Frobenius algebra (in $\Rel$) on the set of all segments: $\mu=\cup$, $\eta=\{0_x\mid x\in P\}$, $\epsilon=\{1_x\mid x\in P\}$, and $\delta\colon a\dashmapsto (b,c)$ if and only if $\mu\colon (c^\perp,b^\perp)\dashmapsto a^\perp$.
\end{example}
Given a Frobenius algebra, by $\alpha$ we denote a composition $\alpha:=\epsilon\circ \mu\colon X\times X\dashrightarrow \{\bullet\}$. Clearly, we can identify $\alpha$ with a subset of $X\times X$. In the case of a groupoid $G$, $\alpha$ contains all pairs $(f,f^{-1})$, $f\in G$. In the case of effect algebras $E$, $\alpha$ contains all pairs $(a,a')$, $a\in E$. For a general Frobenius algebra, $\alpha$ is always a graph of a bijection. Moreover, $\mu$, $\eta$, and $\alpha$ together determine $\delta$ and $\epsilon$, as is apparent from the following lemma.
\begin{lemma}\label{lem:alpha}
    Let $\mathcal{F}=(X;\mu,\eta,\delta,\epsilon)$ be a Frobenius algebra in $\Rel$. Then:
    \begin{enumerate}[(i)]
        \item For each $x\in X$, there are unique $\hat{\alpha}(x)$ and $\hat{\beta}(x)$, such that $(x,\hat{\alpha}(x))\in \alpha=\mu\circ\epsilon\subseteq X\times X$ and $(\hat{\beta}(x),x)\in \alpha=\mu\circ\epsilon\subseteq X\times X$. Moreover, the so-defined mappings $\hat{\alpha},\hat{\beta}\colon X\rightarrow X$ are bijections and $\hat{\alpha}^{-1}=\hat{\beta}$.
        \item For each $r\in\eta$, the element $\hat{\alpha}(r)$ is the unique counit element $e\in\epsilon$ such that $t(e)=r$. Similarly, the element $\hat{\beta}(r)$ is the unique counit element $e\in\epsilon$ such that $s(e)=r$. 
        \item For each triple $(x,y,z)\in X^3$, we have 
        \begin{equation}\label{eq:delta}
            \delta\colon x\dashmapsto (y,z)\Leftrightarrow \mu\colon (x,\hat{\alpha}(z))\dashmapsto y \Leftrightarrow \mu\colon (\hat{\beta}(y),x)\dashmapsto z.
        \end{equation}
    \end{enumerate}
\end{lemma}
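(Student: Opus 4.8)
The plan is to exploit the Frobenius identity together with the (co)unit laws, which is exactly the setting where string-diagram manipulation is most effective. First I would establish part (i). Recall $\alpha = \epsilon\circ\mu$, equivalently (reading the other way) the relation $\mu\circ\epsilon^{\mathrm{op}}$, i.e. the ``cap''/``cup'' of the Frobenius structure. The standard fact that in any Frobenius algebra the cap and cup are a compatible pair of units/counits for a self-duality gives, diagrammatically, the snake equations; unwinding these in $\Rel$ says precisely that for each $x$ there is exactly one $y$ with $(x,y)\in\alpha$ and exactly one $z$ with $(z,x)\in\alpha$. Concretely, I would argue: since $(X;\delta,\epsilon)$ is a comonoid, $\epsilon$ is a partial function (this follows from the counit law $(\epsilon\times\mathrm{id})\circ\delta=\mathrm{id}$, which forces $\epsilon$ to be ``deterministic and total enough''), and dually $\eta$ is co-functional for $\mu$; combining with Lemma~\ref{lem:sourceAndTarget} (existence and uniqueness of $s(a),t(a)$) and the explicit composite $\alpha=\mu\circ\epsilon^{\mathrm{op}}$ pins down $\hat\alpha(x)$ and $\hat\beta(x)$ uniquely. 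That they are mutually inverse bijections is then the snake equation: $(x,\hat\alpha(x))\in\alpha$ and $(\hat\beta(\hat\alpha(x)),\hat\alpha(x))\in\alpha$ together with uniqueness of the left partner of $\hat\alpha(x)$ forces $\hat\beta(\hat\alpha(x))=x$, and symmetrically.

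For part (ii), I would specialize $x=r\in\eta$ in the defining relation $(r,\hat\alpha(r))\in\alpha=\epsilon\circ\mu$, so there is $e$ with $\mu\colon(r,e)\dashmapsto e'$ and $e'\in\epsilon$; I then need $e\in\epsilon$ and $t(e)=r$. The key input is the compatibility forced by the Frobenius identity between the monoid unit $\eta$ and the comonoid counit $\epsilon$ — essentially that $\hat\alpha$ restricts to a bijection $\eta\to\epsilon$ matching sources and targets. I would derive $t(e)=r$ by applying the unit law to $e$ and using that $r$ is a unit together with the uniqueness clause of Lemma~\ref{lem:sourceAndTarget}; the statement about $\hat\beta$ and sources is the mirror image. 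Uniqueness of such $e$ is immediate from part (i).

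For part (iii) — the formula recovering $\delta$ from $\mu$ and $\alpha$ — I would use the characterization of $\delta$ coming from the Frobenius identity. Rewriting one half of \eqref{eq:frobId}, say $\delta\circ\mu=(\mu\times\mathrm{id})\circ(\mathrm{id}\times\delta)$, and composing suitably with $\eta$ and $\epsilon$ to collapse the free $\mu$-leg, one extracts the biconditional $\delta\colon x\dashmapsto(y,z)\Leftrightarrow \mu\colon(x,\hat\alpha(z))\dashmapsto y$; diagrammatically this is just ``bending the $z$-output wire of $\delta$ up through the cap $\alpha$ to turn it into an input of $\mu$,'' and bending it back uses the snake equation from part (i). The second equivalence, with $\hat\beta(y)$, is obtained symmetrically by bending the $y$-wire instead, using the other half of the Frobenius identity.

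The main obstacle I anticipate is part (iii): while the string-diagram intuition (bending wires along $\alpha$) makes the equivalences visually obvious, turning this into a clean relational argument requires being careful that $\alpha$ is a genuine bijection (from part (i)) so that the bent diagram is equivalent to the original rather than merely implied by it — that is, checking both directions of each ``$\Leftrightarrow$''. I would organize this by first proving the forward implication from the Frobenius identity and then the reverse implication by composing with $\hat\alpha^{-1}=\hat\beta$ and invoking the snake/counit identities, so that no step silently loses information.
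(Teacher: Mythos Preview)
Your proposal is correct and follows essentially the same route as the paper: the paper defers (i) and (ii) to \cite{MZ} and for (iii) gives exactly the string-diagram wire-bending argument you describe (insert the cap via (ii), apply the Frobenius identity, then cancel with the counit law). One small clean-up in your sketch of (ii): once you have $\mu\colon(r,e)\dashmapsto e'$ with $r\in\eta$, the unit law $\mu\circ(\eta\times\mathrm{id})=\mathrm{id}$ forces $e=e'$ immediately, so $\hat\alpha(r)=e=e'\in\epsilon$ and $t(\hat\alpha(r))=r$ drops out without further work.
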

\begin{proof}
All claims are proved in~\cite{MZ}. In detail, (i) Proposition 3.4, (ii) Proposition 3.5, and (iii) Lemma 3.7. We give here only a proof of (iii) using string diagrams:
\begin{equation*}
\centering
       \includegraphics[scale=0.6]{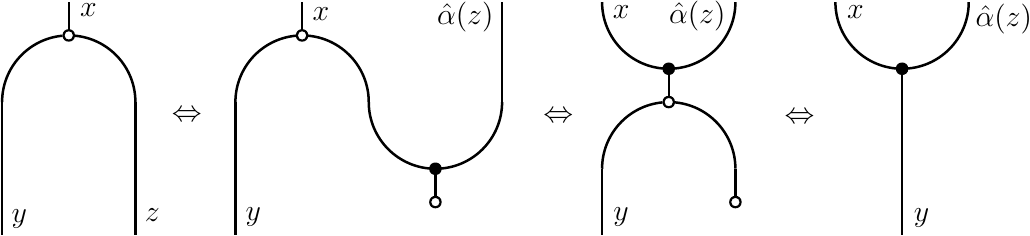} 
\end{equation*}
The first step is due to (ii). The second step follows from the Frobenius identity. The last step is by the counit rule.
\end{proof}
\begin{corollary}\label{cor:frobHom}
    A mapping $h\colon\mathcal{F}_1\rightarrow\mathcal{F}_2$ is a Frobenius algebra homomorphism if and only if it is a monoid homomorphism and preserves counit elements.
\end{corollary}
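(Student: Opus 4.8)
The plan is to treat the two implications separately. The forward direction is just unwinding definitions: a Frobenius algebra homomorphism is by definition simultaneously a monoid homomorphism and a comonoid homomorphism, and one of the two defining conditions of a comonoid homomorphism (the dual of condition (i) for monoids) says exactly that counit elements are carried to counit elements. So there is nothing to prove in that direction.

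For the converse, assume $h\colon\mathcal{F}_1\to\mathcal{F}_2$ is a monoid homomorphism with $h(\epsilon_1)\subseteq\epsilon_2$, and verify the two conditions defining a comonoid homomorphism. The first, preservation of counit elements, is exactly the hypothesis, so everything reduces to the second: $\delta_1\colon a\dashmapsto(b,c)$ should imply $\delta_2\colon h(a)\dashmapsto(h(b),h(c))$. The idea is to transport this statement along the identity \eqref{eq:delta} of Lemma~\ref{lem:alpha}(iii), which re-expresses the relation $\delta$ purely in terms of $\mu$ and the bijection $\hat\alpha$. The one nontrivial ingredient — and the step I expect to be the crux — is the compatibility of $h$ with these bijections, namely $h\circ\hat\alpha_1=\hat\alpha_2\circ h$.

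To establish that compatibility, fix $x\in X_1$. By Lemma~\ref{lem:alpha}(i) the pair $(x,\hat\alpha_1(x))$ lies in $\alpha_1$, i.e.\ $\mu_1\colon(x,\hat\alpha_1(x))\dashmapsto e$ for some $e\in\epsilon_1$. Since $h$ is a monoid homomorphism, $\mu_2\colon(h(x),h(\hat\alpha_1(x)))\dashmapsto h(e)$, and $h(e)\in\epsilon_2$ by hypothesis; hence $(h(x),h(\hat\alpha_1(x)))\in\alpha_2$, and the uniqueness clause of Lemma~\ref{lem:alpha}(i) forces $h(\hat\alpha_1(x))=\hat\alpha_2(h(x))$.

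With the intertwining identity in hand, the second comonoid condition follows by a direct chase: if $\delta_1\colon a\dashmapsto(b,c)$, then $\mu_1\colon(a,\hat\alpha_1(c))\dashmapsto b$ by \eqref{eq:delta}; applying the monoid homomorphism $h$ gives $\mu_2\colon(h(a),h(\hat\alpha_1(c)))\dashmapsto h(b)$; rewriting $h(\hat\alpha_1(c))=\hat\alpha_2(h(c))$ and applying \eqref{eq:delta} once more yields $\delta_2\colon h(a)\dashmapsto(h(b),h(c))$, as desired. The only delicate point throughout is checking that the uniqueness hypotheses of Lemma~\ref{lem:alpha}(i) genuinely apply after pushing forward along $h$; once that is confirmed, the remainder is routine bookkeeping.
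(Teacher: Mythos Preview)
Your proposal is correct and follows essentially the same approach as the paper. The paper's proof is a two-line sketch (``If $h$ preserves $\mu$ and $\epsilon$, it also preserves $\hat{\alpha}$. By~\eqref{eq:delta}, $h$ preserves $\delta$ as well.''), and you have simply unpacked both of those sentences: the intertwining $h\circ\hat\alpha_1=\hat\alpha_2\circ h$ via the uniqueness clause of Lemma~\ref{lem:alpha}(i), and the preservation of $\delta$ via the characterization~\eqref{eq:delta}.
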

\begin{proof}
    If $h$ preserves $\mu$ and $\epsilon$, it also preserves $\hat{\alpha}$. By~\eqref{eq:delta}, $h$ preserves $\delta$ as well.
\end{proof}
\section{Simplicial sets}\label{sec:3}
In this section, we briefly recall the basic notation of simplicial sets, and describe a (nerve) functor from the category of monoids in $\Rel$ to the category of simplicial sets. Let $\Delta$ be a category where objects are finite chains $[n]=\{0<1<\cdots<n\}$, $n\in\mathbb{N}_0$, and morphisms are non-decreasing mappings. A category of \emph{simplicial sets} is the category of presheaves $\SSet=\mathrm{Set}^{\Delta^{op}}$. The standard $n$-simplex $\Delta^n$, $n\in\mathbb{N}$, is the image of $[n]$ under the Yoneda embedding
\begin{equation*}
    \Delta^n=\mathrm{Hom}_\Delta(-,[n]).
\end{equation*}
Given a morphism $f\colon [n]\rightarrow[m]$ in $\Delta$, we will encode $f$ as an $(n+1)$-tuple $$f=\langle f(0),f(1),\ldots, f(n)\rangle.$$
The $k$-dimensional faces of the standard $n$-simplex $\Delta^n$ correspond (via Yoneda embedding) to the $\binom{n}{k}$ embeddings 
\begin{align}\label{eq:embed}
\langle i_0,i_1,\ldots,i_k\rangle\colon [k]\hookrightarrow [n],\text{\ for\ each\ } 0\leq i_0<\cdots<i_k\leq n.    
\end{align}
See the following example of $\Delta^2$:

\begin{align}\label{eq:exOfSim}
              \begin{tikzcd}[row sep={3em,between origins}, column sep={4.1em,between origins}, ampersand replacement=\&, every label/.append style = {font = \small},font=\small]
    \&
    {\langle 2\rangle}
    \&\\[10pt]\&{\langle 012\rangle}
    \&\\
    {\langle 0\rangle}\arrow[ruu,"\langle 02\rangle"{description}]\arrow[rr,"\langle 01\rangle"{description}]
    \&\&{\langle 1\rangle}\arrow[luu,"\langle 12\rangle"{description}]
\end{tikzcd}
\end{align}

The subsimplicial set of $\Delta^n$ induced (by Yoneda embedding) 
from~\eqref{eq:embed} we denote by $\Delta^{\{i_0,\ldots,i_k\}}\subseteq \Delta^n$. In the particular
case of $\langle 0,1,\ldots,\hat{i},\ldots,n\rangle$, we denote the
corresponding face by $\partial_i\Delta^n$. Geometrically, we think of
$\partial_i\Delta^n$ as the unique codimension $1$ face of $\Delta^n$ which does not contain the $i$-th vertex.

Given a general simplicial set $X$, we refer to the set of its $n$-simplices as $X_n:=X([n])$. Moreover, $0$-simplices are called vertices and $1$-simplices are called edges. 
Let $0\leq i\leq n$, we define
\begin{align*}
    \delta^n_i&:=\langle0,\ldots,i-1,i+1,\ldots,n\rangle\colon [n-1]\rightarrow [n],\\
    \sigma^n_i&:=\langle0,\ldots,i,i,\ldots,n\rangle\colon[n+1]\rightarrow [n].
\end{align*}
We call the mappings 
$$s_i^n=X(\sigma^n_i)\colon X_{n-1}\rightarrow X_n \quad\text{and}\quad d_i^n=X(\delta^n_i)\colon X_{n}\rightarrow X_{n-1}$$
\emph{face map} and \emph{degeneracy map}, respectively.
We will the standard notation for a \emph{boundary} and \emph{horns}:
\begin{align}\label{eq:faceAndHorn}
    \partial \Delta^n&=\bigcup_{i=0}^n\partial_i\Delta^n,\\
    \Lambda^n_j&=\bigcup_{i\not= j}\partial_i\Delta^n, \text{\ for\ }j=0,\ldots,n.
\end{align}

\begin{example}\label{ex:nerv}
    Let $\mathcal{C}$ be a small category. Since each poset is a category, we may think of $\Delta$ as a category of small categories. Hence, there is a simplicial set $N(C)=\Hom(-,C)$. The set of $n$-simplices $N(C)_n=\Hom([n],C)$ consists of $n$-tuples of composable arrows in $\mathcal{C}$.
\end{example}
Since each small category is also a monoid in $\Rel$, there is a (full) embedding $\Delta\hookrightarrow \RelMon$. Hence, following the same pattern as in Example~\ref{ex:nerv}, we can define for each $\mathcal{M}$ in $\RelMon$ a simplicial set
\begin{align}\label{eq:nerv}
    N(\mathcal{M}):=\Hom(-,\mathcal{M}).
\end{align}
Note that the prescription~\eqref{eq:nerv} is functorial in $\mathcal{M}$.
Given $\mathcal{M}=(M;\mu,\eta)$ a monoid in $\Rel$, we can define low-dimensional simplices of $X:=N(\mathcal{M})$ in more explicit terms as follows:
\begin{itemize}
    \item $X_0=\eta$,
    \item $X_1=M$,
    \item $X_2=\{(a,b,c)\in X^3\mid \mu\colon (a,b)\dashmapsto c\}$.
\end{itemize}
The face and degeneracy maps then act as:
\begin{itemize}
    \item $s^0_0\colon X_0\rightarrow X_1$ equals $\eta\subseteq M$,
    \item $d^1_0\colon a\mapsto t(a)$ and $d^1_1\colon x\mapsto s(a)$,
    \item $s^1_0\colon x\mapsto (a,s(a),a)$ and $s^1_1\colon a\mapsto (t(a),a,a)$,
    \item $d^2_0\colon (a,b,c)\mapsto a$, $d^2_1\colon (a,b,c)\mapsto c$, and $d^2_2\colon (a,b,c)\mapsto b$.
\end{itemize}
The above explicit construction is used to define $N(\mathcal{M})$ in~\cite{MZ} (for the case of a Frobenius algebra).

\begin{theorem}\label{thm:2coskeletal}
    The construction above results in a $2$-coskeletal simplicial set $X$. That is, for each $n\geq 3$ and $h\colon\partial \Delta^n\hookrightarrow X$, there is a unique extension $\hat{h}\colon \Delta^n\rightarrow X$. Moreover, if some $g\colon\partial\Delta^2\rightarrow X$ has an extension $\hat{g}\colon\Delta^2\rightarrow X$, then the extension is unique.

    Consequently, given $\mathcal{M}, \mathcal{N}\in\RelMon$, each natural transformation $\psi\colon N(\mathcal{M})\rightarrow N(\mathcal{N})$ is uniquely determined by its action on the edges.
\end{theorem}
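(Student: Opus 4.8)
The plan is to prove $2$-coskeletality by induction on dimension, establishing along the way that the simplicial identities force every high-dimensional simplex to be determined by its boundary. First I would settle the claim in dimension $2$: if $g\colon\partial\Delta^2\to X$ is given, it consists of three edges $a,b,c\in X_1=M$ whose sources and targets match appropriately (so that $g$ is a well-defined map out of $\partial\Delta^2$), and an extension to $\Delta^2$ is by definition a triple $(a,b,c)\in X_2$ with $\mu\colon(a,b)\dashmapsto c$; since $X_2$ is literally defined as a \emph{subset} of $X^3$, there is at most one such triple, giving uniqueness (existence may fail, which is why the statement only asserts uniqueness here). This observation is the base case and also the conceptual heart: a simplex of $X$ in dimension $\geq 2$ carries no data beyond the tuple of its vertices/edges, so the content is entirely in checking that boundaries glue.

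Next I would handle dimension $3$. Given $h\colon\partial\Delta^3\to X$, the four $2$-faces are triples in $X_2$ sharing edges along their common $1$-faces. An extension $\hat h\colon\Delta^3\to X$ is determined by where it sends the nondegenerate top simplex $\langle 0123\rangle$, and by $2$-coskeletality this in turn must be some element of $X_3$; but since $X$ is built from a monoid with $X$ concentrated (as new data) in degrees $\leq 2$, $X_3$ can be identified with the set of compatible families of four $2$-simplices, and I would check directly that the four given faces \emph{are} such a compatible family precisely because associativity of $\mu$ holds. Concretely, the two ways of composing $a$, $b$, $c$ along the $3$-simplex agree by Definition~\ref{def:monoid}(i), so the family extends, and uniqueness is immediate because a $3$-simplex is nothing but its list of faces. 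Then for $n\geq 4$ I would run the standard coskeleton induction: $\partial\Delta^n$ already determines all faces of dimension $\leq n-1$, each of which is (inductively) rigid, and since no new data appears in degree $n$, the unique extension exists automatically — here one invokes that a map $\partial\Delta^n\to X$ into a $2$-coskeletal $X$ extends uniquely, which is exactly the inductive hypothesis packaged correctly.

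For the final ``consequently'' clause, let $\psi\colon N(\mathcal M)\to N(\mathcal N)$ be a natural transformation and suppose $\psi'$ is another with $\psi_1=\psi'_1$ on edges. Naturality with respect to the maps $\delta^n_i,\sigma^n_i$ forces $\psi_n$ and $\psi'_n$ to agree on all vertices (via $d^1_0,d^1_1$ and $s^0_0$) and then, by the coskeletality just proved, on all higher simplices: any $n$-simplex $x\in N(\mathcal M)_n$ restricts along its edges to a sub-boundary, $\psi_n(x)$ and $\psi'_n(x)$ have the same image under every face map down to edges, and a $2$-coskeletal simplicial set has the property that a simplex is determined by its $1$-skeleton's image together with the (unique) fillers — so $\psi_n(x)=\psi'_n(x)$. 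I would phrase this last point as: the unit of the (co)skeleton adjunction exhibits $N(\mathcal M)$ as a subobject of $\mathrm{cosk}_2\,\mathrm{sk}_1 N(\mathcal M)$, hence a morphism out of it is determined on the $1$-skeleton.

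The main obstacle I anticipate is dimension $3$: one must verify that a compatible family of four $2$-simplices on $\partial\Delta^3$ genuinely assembles to an element of $X_3$, and this is exactly where associativity of $\mu$ (Remark~\ref{rem:Asoc}, Definition~\ref{def:monoid}(i)) enters — the two evident ``composites'' read off the front and back faces must coincide, and checking that the four shared-edge conditions are equivalent to this single associativity equation is the one genuinely computational step. Everything in dimensions $\geq 4$ is then formal coskeleton bookkeeping.
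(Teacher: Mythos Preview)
Your dimension-$2$ argument is correct and is exactly what the paper does. The divergence is at $n\geq 3$, and there your emphasis on associativity reflects a misunderstanding of what $X_n$ is.

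The paper's proof is one line because it uses the representable definition $N(\mathcal{M})_n=\Hom_{\RelMon}([n],\mathcal{M})$ directly: an $n$-simplex \emph{is} a monoid homomorphism $[n]\to\mathcal{M}$, and such a homomorphism is nothing more than an assignment $a_{ij}\in M$ for each $i\leq j$ in $[n]$ satisfying $\mu\colon(a_{jk},a_{ij})\dashmapsto a_{ik}$ for all $i<j<k$ (the unit conditions being automatic from the vertex data). For $n\geq 3$ the boundary $\partial\Delta^n$ already contains every edge and every $2$-simplex of $\Delta^n$, so a map $h\colon\partial\Delta^n\to X$ literally hands you all the $a_{ij}$ together with all the required relations. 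That \emph{is} a homomorphism $\overline h\colon[n]\to\mathcal{M}$, and it is the unique extension. No induction, no associativity.

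Your worry that at $n=3$ ``the two evident composites read off the front and back faces must coincide'' is misplaced: those two composites are not quantities to be compared, they are both the \emph{given} edge $a_{03}$, which is part of the boundary data. There is no consistency condition left to check. The place where associativity appears in the paper is the converse direction (Example~\ref{ex:assoc}): associativity is a \emph{consequence} of $2$-coskeletality, not an input to it. Your phrasing ``$X_3$ can be identified with the set of compatible families of four $2$-simplices'' is the statement you are trying to prove, so invoking it mid-argument is circular; the paper avoids this by never leaving the $\Hom([n],\mathcal{M})$ description.

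Your ``consequently'' argument is fine and matches the paper's, though once you know every simplex of $N(\mathcal{M})$ is determined by its edges the conclusion is immediate without appealing to the skeleton/coskeleton adjunction.
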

\begin{proof}
    Assume any $h\colon\partial\Delta^n\rightarrow N(\mathcal{M})$ with $n\geq 3$. Since $\partial\Delta^n$ contains all the $i$ -simplices of $\Delta^n$, for $i=0,1,2$, it follows that $h$ defines a unique monoid homomorphism $\overline{h}\colon [n]\rightarrow \mathcal{M}$. This $\overline{h}$ is the unique extension $\Delta^n\rightarrow N(\mathcal{M})$.

    The moreover part is clear from the concrete description of $X_2$. The last part of the statement follows from the two previous claims.
\end{proof}
Note that the following proposition is equivalent to the claim that the subcategory $\Delta\subset \RelMon$ is a dense subcategory.
\begin{proposition}\label{prop:fullfaithful}
    The functor $N\colon\mathcal{M}\mapsto\mathrm{Hom}(-,\mathcal{M})$ from $\RelMon$ to the category of simplicial sets is full and faithful.
\end{proposition}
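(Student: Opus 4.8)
The plan is to establish fullness and faithfulness separately, leaning on Theorem~\ref{thm:2coskeletal} to reduce everything to the level of edges and $2$-simplices. First, faithfulness: suppose $h_1, h_2\colon \mathcal{M}\to\mathcal{N}$ are monoid homomorphisms in $\RelMon$ with $N(h_1)=N(h_2)$. Since $N(h_i)$ acts on $1$-simplices as $h_i$ itself (under the identification $X_1 = M$), equality of the induced natural transformations forces $h_1 = h_2$ on the underlying sets, hence $h_1 = h_2$ as homomorphisms. So $N$ is faithful essentially by construction.

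For fullness, the main work is to show that an arbitrary natural transformation $\psi\colon N(\mathcal{M})\to N(\mathcal{N})$ arises as $N(h)$ for some homomorphism $h$. I would define $h\colon M\to N$ to be the action of $\psi$ on edges, $h := \psi_{[1]}\colon X_1\to Y_1$, where $X = N(\mathcal{M})$ and $Y = N(\mathcal{N})$. The first step is to check $h$ is a monoid homomorphism, i.e.\ that it satisfies conditions (i) and (ii) in the definition of $\RelMon$-morphism. Condition (i), preservation of $\eta$, follows from naturality with respect to $\sigma^0_0$: the degenerate edge $s^0_0(r)$ of a $0$-simplex $r\in\eta_{\mathcal{M}}$ is sent by $\psi$ to a degenerate edge of $\psi_{[0]}(r)$, and degenerate edges of $N(\mathcal{N})$ are exactly the elements of $\eta_{\mathcal{N}}$; so $h(r) = \psi_{[1]}(s^0_0(r)) = s^0_0(\psi_{[0]}(r))\in\eta_{\mathcal{N}}$. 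Condition (ii), preservation of $\mu$, follows from naturality with respect to the three face maps $\delta^2_0,\delta^2_1,\delta^2_2\colon[1]\to[2]$: if $\mu\colon(a,b)\dashmapsto c$, then $(a,b,c)\in X_2$, and naturality forces $\psi_{[2]}(a,b,c)$ to be an element of $Y_2$ whose three faces are $h(a), h(c), h(b)$ respectively, which by the explicit description of $Y_2$ means precisely $\mu_{\mathcal{N}}\colon(h(a),h(b))\dashmapsto h(c)$.

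The second step is to verify $N(h) = \psi$ as natural transformations. By the ``consequently'' clause of Theorem~\ref{thm:2coskeletal}, a natural transformation between nerves of monoids in $\Rel$ is uniquely determined by its action on edges; since $N(h)$ and $\psi$ agree on edges by construction of $h$, they are equal. This is where the $2$-coskeletality does the real lifting — it is what makes the edge-level data ``enough.'' I expect the only mild subtlety to be bookkeeping the face-map conventions (which of $d^2_0, d^2_1, d^2_2$ picks out $a$, $b$, $c$), so that the naturality squares are matched to the correct projections; the explicit formulas listed after~\eqref{eq:nerv} settle this. No genuine obstacle is anticipated: once $h$ is shown to live in $\RelMon$, Theorem~\ref{thm:2coskeletal} closes the argument immediately.
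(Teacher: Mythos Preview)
Your proposal is correct and follows essentially the same approach as the paper: define the candidate homomorphism as the action of $\psi$ on edges, use naturality with respect to degeneracies and face maps to verify it preserves units and multiplication, and invoke Theorem~\ref{thm:2coskeletal} to conclude $N(h)=\psi$. The paper's proof is organized identically, with only cosmetic differences in how the naturality squares are phrased.
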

\begin{proof}
    Let $\mathcal{M},\mathcal{N}\in\RelMon$. In the proof, we will identify an element $a\in\mathcal{M}$ with the corresponding unique morphism $a\colon[1]\rightarrow\mathcal{M}$.

    \emph{Faithfulness:} Assume $f,g\colon\mathcal{M}\rightarrow\mathcal{N}$. If $f(a)\not=g(a)$ for some $a\in\mathcal{M}$, then $f\circ a\not= g\circ a$. Hence $N(f)\not=N(g)$.

\emph{Fullness:} Assume a natural transformation $$\psi\colon \Hom(-,\mathcal{M})\rightarrow\Hom(-,\mathcal{N}).$$ We need to find $f\colon\mathcal{M}\rightarrow\mathcal{N}$ so that $\psi=N(f)$. For each $a\colon[1]\rightarrow\mathcal{M}$, we set $f(a)=\psi_1(a)$. We will show that $f$ is a homomorphism. Assume $a_0,a_1,a_2\in\mathcal{M}$ satisfy 
\begin{equation}\label{eq:tri}
    \mu_\mathcal{M}\colon(a_0,a_2)\dashmapsto a_1
\end{equation}
and let $t\colon [2]\rightarrow\mathcal{M}$ be the obvious morphism coding~\eqref{eq:tri}. That is, $a_i=d_i^2(t)$, for $i=0,1,2$. It follows $\psi_2(t)\colon [2]\rightarrow\mathcal{N}$ witnesses $\mu_\mathcal{N}\colon(f(a_0),f(a_2))\dashmapsto f(a_1)$. 

To prove that $f$ preserves unit elements, observe that $r\in\mathcal{M}$ is a unit if and only if $r\colon[1]\rightarrow\mathcal{M}$ splits through a monoid of one element as $[1]\rightarrow[0]\rightarrow\mathcal{M}$. That is, if and only if $r$ is a degenerate edge (in the image of $s_0^0$). Since $\psi$ preserves degenerate edges, the mapping $f$ preserves unit elements.

Finally, since $\psi$ coincides with $N(f)$ on the edges (by the very definition of $f$), we have $\psi=N(f)$ by Theorem~\ref{thm:2coskeletal}.
\end{proof}

When working with a simplicial set $X$, we will frequently refer to its $n$-simplices $x\colon\Delta^n\rightarrow X$ using diagrams as follows (here $n=3$):
\begin{align}
      \begin{tikzcd}[row sep={2.4em,between origins}, column sep={3.3em,between origins}, ampersand replacement=\&]
    \&
    {3}
    \&\\[10pt]\&
    {2}\arrow[u,"c"{description}]
    \&\\
    {0}
    \arrow[ruu,"e"{description}, bend left=5]
    \arrow[rr, "a"{description}, bend right=5]
    \arrow[ru, "{x}"{description}]
    \&\&
    {1}\arrow[uul,"y"{description}, bend right=5]\arrow[lu, "b"'{description,name=f13}]
\end{tikzcd} 
\end{align}
Usually, we will label only edges somehow important to the situation. Since vertices are uniquely determined by edges, we will label them only by numbers to stress the order.
\section{Lifting properties and algebraic properties}
In this section, we recall the notion of lifting properties and present several algebraic properties of monoids that can be captured using lifting properties.

Let $m\colon A\rightarrow B$ and $p\colon X\rightarrow Y$ be two arrows in a category $\mathcal{C}$. Assume that for each commutative square 
\begin{align}
    \begin{tikzcd}[ampersand replacement=\&]
        {A}\arrow[d,"m"]\arrow[r,"f"]\&{X}\arrow[d,"p"]\\
        {B}\arrow[r,"g"]\&{Y}
    \end{tikzcd}
\end{align}
there is an arrow $l\colon B\rightarrow X$, such that $f=l\circ m$ and $g=p\circ l$. In this case, we say $m$ has the \emph{left lifting property} (LLP) for $p$ or $p$ has the \emph{right lifting property} (RLP) for $m$. Denote this binary relation by 
\begin{align}\label{eq:lift}
    m\boxslash p.
\end{align}
In the particular case where $Y$ is a terminal object of the category $\mathcal{C}$ and $p=!_X$ is the unique arrow, we have $m\boxslash p$ if and only if 
\begin{align*}
    \Hom(m,X)\colon\Hom(B,X)\rightarrow\Hom(A,X)
\end{align*}
is a surjection. 
In such a case, we abbreviate~\eqref{eq:lift} as $m\boxslash X$, and we say that $X$ has the \emph{extension property} with respect to $m$.
If $m$ is embedding and $B$ is clear from the context, we will also say $X$ \emph{admits a filling of} $A$. 
\begin{example}\label{ex:assoc}
    Let $(M;\mu,\eta)$ be a monoid in $\Rel$ and $X=N(\mathcal{M})$ the corresponding simplicial set. Then the associativity rule guarantees that $X$ has the extension property with respect to $\partial_3\Delta^3\cup\partial_1\Delta^3\hookrightarrow \Delta^3$ and $\partial_2\Delta^3\cup\partial_0\Delta^3\hookrightarrow \Delta^3$.
 \begin{align*}
   \begin{tikzcd}[row sep={2.4em,between origins}, column sep={3.3em,between origins}, ampersand replacement=\&,execute at end picture={
\foreach \Nombre in  {A,B,C}
  {\coordinate (\Nombre) at (\Nombre.center);}
\fill[pattern=horizontal lines,pattern color=gray!50] 
  (A) -- (B) -- (C) -- cycle;
  \foreach \Nombre in  {A,B,D}
  {\coordinate (\Nombre) at (\Nombre.center);}
\fill[pattern=north east lines,pattern color=gray!50] 
  (A) -- (C) -- (D) -- cycle;
}]
    \&
    |[alias=D]|3 \& \\[10pt]\&
    |[alias=C]|2\arrow[u,"a"{description}]\&\\
    |[alias=A]|0\arrow[ru]
    \arrow[ruu]
    \arrow[rr,"c"{description}]
    \&\&
    |[alias=B]|1\arrow[ul, "b"{description}]\\
    \&\partial_1\Delta^3\cup\partial_3\Delta^3\&
\end{tikzcd} 
\hookrightarrow
  \begin{tikzcd}[row sep={2.9em,between origins}, column sep={3.3em,between origins}, ampersand replacement=\&,
execute at end picture={
  \foreach \Nombre in {A,B,D}
    {\coordinate (\Nombre) at (\Nombre.center);}
    \fill[opacity=0.1] (A) -- (B) -- (D) -- cycle;
}]
  \&
  |[alias=D]|3 \& \\
  \&
  |[alias=C]|2 \arrow[u,"a"{description}] \& \\
  |[alias=A]|0 \arrow[ru]
               \arrow[ruu]
               \arrow[rr,"c"{description}]
  \& \&
  |[alias=B]|1 \arrow[luu] \arrow[ul,"b"{description}] \\
  \&
  \Delta^3 \&
\end{tikzcd}
\hookleftarrow
\begin{tikzcd}[row sep={2.4em,between origins}, column sep={3.3em,between origins}, ampersand replacement=\&,execute at end picture={
\foreach \Nombre in  {B,C,D}
  {\coordinate (\Nombre) at (\Nombre.center);}
\fill[pattern= north west lines,pattern color=gray!50] 
  (B) -- (C) -- (D) -- cycle;
  \foreach \Nombre in  {A,B,D}
  {\coordinate (\Nombre) at (\Nombre.center);}
\fill[pattern=vertical lines,pattern color=gray!50] 
  (A) -- (B) -- (D) -- cycle;
}]
    \&
    |[alias=D]|3 \& \\[10pt]\&
    |[alias=C]|2\arrow[u,"a"{description}]\&\\
    |[alias=A]|0
    \arrow[ruu]
    \arrow[rr,"c"{description}]
    \&\&
    |[alias=B]|1\arrow[luu]\arrow[ul, "b"{description}]\\
    \&\partial_0\Delta^3\cup\partial_2\Delta^3\&
\end{tikzcd} 
\end{align*}
\end{example}

The following theorem provides an example of a correspondence between certain properties of monoids and lifting properties.
\begin{theorem}\label{thm:alfAndLif}
    Let $\mathcal{M}=(M;\mu,\eta)$ be a monoid in $\Rel$ and $N(\mathcal{M})$ be the corresponding simplicial set. Then 
    \begin{enumerate}[(i)]
        \item $\mu$ has cancellation property in the first coordinate if and only if 
        \begin{equation}\label{eq:lif1}
            (\Lambda_0^3\hookrightarrow \Delta^3)\boxslash N(\mathcal{M}).
        \end{equation}
        \item $\mu$ has cancellation property in the second coordinate if and only if 
        \begin{equation}\label{eq:lif2}
           (\Lambda_3^3\hookrightarrow \Delta^3)\boxslash N(\mathcal{M}).
        \end{equation}
        \item $\mu$ is a partial operation if and only if
        \begin{equation}\label{eq:lif3}
            (\Lambda_1^3\hookrightarrow \Delta^3)\boxslash N(\mathcal{M})\text{\ \ \ or\ equivalenty\ iff\ \ \ }
        (\Lambda_2^3\hookrightarrow \Delta^3)\boxslash N(\mathcal{M}).
        \end{equation}
    \end{enumerate}
\end{theorem}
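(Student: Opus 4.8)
The plan is to unwind each of the three lifting properties in terms of the explicit description of the simplices of $N(\mathcal{M})$ given in Section~3, and to match them against the relevant algebraic statement about $\mu$. The underlying principle is always the same: a horn $\Lambda^3_j\hookrightarrow \Delta^3$ prescribes three of the four $2$-faces of a tetrahedron, together with all the edges; by the description of $X_2$, specifying a $2$-simplex is the same as recording a witness of $\mu\colon(a,b)\dashmapsto c$, and the $3$-coskeletality (Theorem~\ref{thm:2coskeletal}) tells us that filling $\Lambda^3_j$ amounts precisely to producing the missing $2$-face subject to the compatibility constraints on shared edges. So in each case I would first read off exactly which four triangle-relations are involved in a $3$-simplex on vertices $0,1,2,3$ --- using the face-map formulas $d^2_0\colon(a,b,c)\mapsto a$, $d^2_1\mapsto c$, $d^2_2\mapsto b$ and the convention that edge $\langle ij\rangle$ carries an element of $M$ --- and then see which datum is free and which is determined.

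For (i): a $3$-simplex records, on the three ``front'' faces and the ``back'' face, the relations
\begin{align*}
\mu\colon(\langle01\rangle,\langle13\rangle)&\dashmapsto\langle03\rangle,\quad
\mu\colon(\langle01\rangle,\langle12\rangle)\dashmapsto\langle02\rangle,\\
\mu\colon(\langle02\rangle,\langle23\rangle)&\dashmapsto\langle03\rangle,\quad
\mu\colon(\langle12\rangle,\langle23\rangle)\dashmapsto\langle13\rangle,
\end{align*}
(together with associativity, which is automatic). The horn $\Lambda^3_0$ omits $\partial_0\Delta^3$, i.e. the face on vertices $\{1,2,3\}$, so the datum $\mu\colon(\langle12\rangle,\langle23\rangle)\dashmapsto\langle13\rangle$ is the one to be supplied, while the three faces containing vertex $0$ are given. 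The question of whether a compatible filler exists, for arbitrary given data, is exactly: given that $\langle01\rangle=:p$ multiplies with $\langle12\rangle=:x$ to give $\langle02\rangle=:q$, and $p$ multiplies with $\langle13\rangle=:y$ to give $\langle03\rangle=:r$, and $q$ multiplies with $\langle23\rangle=:z$ to give $r$, must $x$ multiply with $z$ to give $y$ --- and crucially, conversely, every instance of ``$pa=c$ and $pb=c$'' (via a suitable choice of auxiliary data) arises this way, forcing $a=b$. I would make precise the translation ``RLP against $\Lambda^3_0$ $\Leftrightarrow$ left-cancellativity of $\mu$'' by (a) showing a horn with no filler can be built from any failure of cancellation, choosing the units/sources/targets appropriately so that all edge-compatibilities hold, and (b) showing that when $\mu$ is left-cancellative the missing product $xz$ is uniquely pinned down by associativity and hence the filler exists (and is unique by coskeletality). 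Part (ii) is the mirror image under the order-reversing symmetry of $\Delta$ that swaps $\langle i_0\ldots\rangle$-labelling and exchanges first/second coordinate of $\mu$; I would simply invoke that duality rather than repeat the argument.

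For (iii): here $\Lambda^3_1$ omits the face $\{0,2,3\}$, so among the four relations above the one to be produced is $\mu\colon(\langle02\rangle,\langle23\rangle)\dashmapsto\langle03\rangle$, while we are given $pq$-type data expressing $\langle02\rangle$ and $\langle03\rangle$ and $\langle13\rangle$ as products. Unwinding, the existence of a filler for every such horn says: whenever $q=px$, $r=py$ (both with the same $p$) and $y=xz$, the product $q z$ is defined and equals $r$; using associativity $qz=(px)z=p(xz)=py=r$, so the product $qz$ being \emph{defined} at all is the only content --- and that is exactly the statement that $\mu$ is total (single-valued-ness is automatic since $\mu$ is a graph of ... no: $\mu$ need not be single-valued; ``partial operation'' here I read as: $\mu$, viewed as a relation $M\times M\dashrightarrow M$, is a \emph{partial function}, i.e. single-valued). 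Concretely a failure of single-valuedness, $pq\dashmapsto c_1$ and $pq\dashmapsto c_2$ with $c_1\ne c_2$, produces a horn with two distinct fillers / or with none depending on how I set up the target face, so I would argue both directions as in (i). The equivalence of the $\Lambda^3_1$ and $\Lambda^3_2$ versions again follows from the order-reversal duality.

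The main obstacle I anticipate is the bookkeeping in the ``converse'' directions: given a failure of cancellation (resp. of single-valuedness), one must construct an \emph{honest} $3$-horn, meaning all the edge labels on shared faces must agree, which forces one to supply the right source/target unit edges (via Lemma~\ref{lem:sourceAndTarget}) on the ``short'' edges $\langle02\rangle,\langle13\rangle$, etc., and to check that the two or three given faces are genuinely compatible $2$-simplices of $N(\mathcal{M})$. This is routine but fiddly, and it is where the precise face/degeneracy formulas of Section~3 do the real work; the ``forward'' directions, by contrast, are short computations with associativity plus the uniqueness clause of Theorem~\ref{thm:2coskeletal}.
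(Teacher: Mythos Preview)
Your strategy is exactly the paper's: in one direction build an explicit horn from a counterexample (with unit edges $s(-),t(-)$ on the short sides, via Lemma~\ref{lem:sourceAndTarget}), and in the other use associativity (Example~\ref{ex:assoc}) to manufacture a candidate $3$-simplex and then invoke the algebraic hypothesis to match it against the given horn, finishing by $2$-coskeletality.

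There is, however, a bookkeeping slip that matters. From the face maps you quote, a $2$-simplex $(a,b,c)\in X_2$ has $d_0=a=\langle 12\rangle$, $d_2=b=\langle 01\rangle$, $d_1=c=\langle 02\rangle$, so the defining relation is $\mu\colon(\langle 12\rangle,\langle 01\rangle)\dashmapsto\langle 02\rangle$ --- composition order. Your four displayed relations have the arguments reversed. With the correct order, $\Lambda^3_0$ reads: given $\mu\colon(b,a)\dashmapsto e$, $\mu\colon(f,a)\dashmapsto d$, $\mu\colon(c,e)\dashmapsto d$, supply $\mu\colon(c,b)\dashmapsto f$; after applying associativity one needs ``same second argument $a$, same output $d$, hence same first argument'', i.e.\ cancellation in the \emph{first} coordinate as stated. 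Your reversal would swap the conclusions of (i) and (ii). For (iii) your self-correction to single-valuedness is right, but the converse is a shade subtler than your chain of equalities suggests: associativity applied to the two faces $\partial_0\Delta^3\cup\partial_2\Delta^3\subset\Lambda^3_1$ only produces \emph{some} edge $g$ with $\mu\colon(b,a)\dashmapsto g$ and $\mu\colon(c,g)\dashmapsto d$, and single-valuedness is needed precisely to identify this $g$ with the edge $e=\langle 02\rangle$ already prescribed by the horn's $\partial_3$-face --- this is how the paper phrases it.
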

\begin{proof}
    Denote $X=N(\mathcal{M})$. We will prove (i) and (ii) together. Assume first that the lifting property~\eqref{eq:lif1} (\eqref{eq:lif2}, respectively) holds and for some $a,a_1,a_2,b,b_1,b_2,c\in\mathcal{M}$ we have $\mu\colon (a_1,b)\dashmapsto c$ and $\mu\colon (a_2,b)\dashmapsto c$ ($\mu\colon (a,b_1)\dashmapsto c$ and $\mu\colon (a,b_2)\dashmapsto c$, respectively). Then we have a horn $\Lambda_0^3\hookrightarrow X$ ($\Lambda_3^3\hookrightarrow X$, respectively) given by~\eqref{diag:canc}. (Note that in~\eqref{diag:canc}, the missing faces are highlighted by a hatching.)
\begin{align}\label{diag:canc}
       \begin{tikzcd}[row sep={2.4em,between origins}, column sep={3.3em,between origins}, ampersand replacement=\&,execute at end picture={
  \foreach \Nombre in  {B,C,D}
  {\coordinate (\Nombre) at (\Nombre.center);}
\fill[pattern=north east lines,pattern color=gray!50] 
  (B) -- (C) -- (D) -- cycle;
}]
    \&
    |[alias=D]|3 \& \\[10pt]\&
    |[alias=C]|2\arrow[u,"t(c)"{description}]\&\\
    |[alias=A]|0\arrow[ru,"c"{description}]
    \arrow[ruu,bend left= 5,"c"{description}]
    \arrow[rr,bend right =5,"b"{description}]
    \&\&
    |[alias=B]|1\arrow[luu,bend right = 5,"a_2"{description}]\arrow[ul, "a_1"{description}]
\end{tikzcd} \ \ \ \text{respective}\ \ \ 
       \begin{tikzcd}[row sep={2.4em,between origins}, column sep={3.3em,between origins}, ampersand replacement=\&,execute at end picture={
  \foreach \Nombre in  {A,B,C}
  {\coordinate (\Nombre) at (\Nombre.center);}
\fill[pattern=vertical lines,pattern color=gray!50] 
  (B) -- (C) -- (A) -- cycle;
}]
    \&
    3 \& \\[10pt]\&
    |[alias=C]|2\arrow[u,"a"{description}]\&\\
    |[alias=A]|0\arrow[ru,"b_2"{description}]
    \arrow[ruu,bend left= 5,"c"{description}]
    \arrow[rr,bend right =5,"b_1"{description}]
    \&\&
    |[alias=B]|1\arrow[luu,bend right = 5,"a"{description}]\arrow[ul, "s(a)"{description}]
\end{tikzcd}
\end{align}
The corresponding lifts guarantee $\mu\colon (t(c),a_1)\dashmapsto a_2$ ($\mu\colon (s(a),b_1)\dashmapsto b_2$, resp.). As $t(c),s(a)\in\eta$, we get $a_1=a_2$ ($b_1=b_2$, respectively).

Conversely, assume we have lifting problems $\Lambda_0^3\hookrightarrow X$  and $\Lambda_3^3\hookrightarrow X$, with the following labeling of edges:
\begin{align}\label{diag:sim3}
       \begin{tikzcd}[row sep={2.4em,between origins}, column sep={3.3em,between origins}, ampersand replacement=\&,execute at end picture={
  \foreach \Nombre in  {B,C,D}
  {\coordinate (\Nombre) at (\Nombre.center);}
\fill[pattern=north east lines,pattern color=gray!50] 
  (B) -- (C) -- (D) -- cycle;
}]
    \&
    |[alias=D]|3 \& \\[10pt]\&
    |[alias=C]|2\arrow[u,"c"{description}]\&\\
    |[alias=A]|0\arrow[ru,"e"{description}]
    \arrow[ruu,bend left= 5,"d"{description}]
    \arrow[rr,bend right =5,"a"{description}]
    \&\&
    |[alias=B]|1\arrow[luu,bend right = 5,"f"{description}]\arrow[ul, "b"{description}]
\end{tikzcd} \ \ \ 
       \begin{tikzcd}[row sep={2.4em,between origins}, column sep={3.3em,between origins}, ampersand replacement=\&,execute at end picture={
  \foreach \Nombre in  {A,B,C}
  {\coordinate (\Nombre) at (\Nombre.center);}
\fill[pattern=vertical lines,pattern color=gray!50] 
  (B) -- (C) -- (A) -- cycle;
}]
    \&
    3 \& \\[10pt]\&
    |[alias=C]|2\arrow[u,"c"{description}]\&\\
    |[alias=A]|0\arrow[ru,"e"{description}]
    \arrow[ruu,bend left= 5,"d"{description}]
    \arrow[rr,bend right =5,"a"{description}]
    \&\&
    |[alias=B]|1\arrow[luu,bend right = 5,"f"{description}]\arrow[ul, "b"{description}]
\end{tikzcd}
\end{align}
In the first case, the presence of the three faces gives (1) $\mu\colon (b, a)\dashmapsto e$, (2) $\mu\colon(c,e)\dashmapsto d$, and (3) $\mu\colon (f, a)\dashmapsto d$. Applying associativity to (1) and (2), we yield $g\in M$ such that (4) $\mu\colon (g, a)\dashmapsto d$ and (5) $\mu\colon (c,b)\dashmapsto g$. Due to the property of left cancellation, (4) and (3) give $f=g$, therefore (5) gives the missing face and by 2-coskeletality (Theorem~\ref{thm:2coskeletal}) the 3-simplex in concern admits a filling. The second case of $\Lambda_3^3\hookrightarrow X$ is analogous.

(iii) Assume that $\mu\colon (a,b)\dashmapsto c_1$, $\mu\colon (a,b)\dashmapsto c_2$, and at least one of the lifting properties in~\eqref{eq:lif3} holds. We have two options to prove $c_1=c_2$. We may arrange a lifting problem $\Lambda_1^3\hookrightarrow X$ (left) or a lifting problem $\Lambda_2^3\hookrightarrow X$ (right) as in~\eqref{diag:part}:
\begin{align}\label{diag:part}
       \begin{tikzcd}[row sep={2.4em,between origins}, column sep={3.3em,between origins}, ampersand replacement=\&,execute at end picture={
  \foreach \Nombre in  {A,C,D}
  {\coordinate (\Nombre) at (\Nombre.center);}
\fill[pattern=vertical lines,pattern color=gray!50] 
  (A) -- (C) -- (D) -- cycle;
}]
    \&
    |[alias=D]|3 \& \\[10pt]\&
    |[alias=C]|2\arrow[u,"t(a)"{description}]\&\\
    |[alias=A]|0\arrow[ru,"c_2"{description}]
    \arrow[ruu,bend left= 5,"c_1"{description}]
    \arrow[rr,bend right =5,"b"{description}]
    \&\&
    1\arrow[luu,bend right = 5,"a"{description}]\arrow[ul, "a"{description}]
\end{tikzcd} \ \ \ 
       \begin{tikzcd}[row sep={2.4em,between origins}, column sep={3.3em,between origins}, ampersand replacement=\&,execute at end picture={
  \foreach \Nombre in  {A,B,C}
  {\coordinate (\Nombre) at (\Nombre.center);}
\fill[pattern=vertical lines,pattern color=gray!50] 
  (B) -- (C) -- (A) -- cycle;
}]
    \&
    |[alias=C]|3 \& \\[10pt]\&
    2\arrow[u,"a"{description}]\&\\
    |[alias=A]|0\arrow[ru,"b"{description}]
    \arrow[ruu,bend left= 5,"c_1"{description}]
    \arrow[rr,bend right =5,"s(b)"{description}]
    \&\&
    |[alias=B]|1\arrow[luu,bend right = 5,"c_2"{description}]\arrow[ul, "b"{description}]
\end{tikzcd}
\end{align}
A solution of any of the two lifting problems gives us $c_1=c_2$.

Conversely, assume $\mu$ is a partial operation and consider a lifting problem $\Lambda_1^3\rightarrow X$ with edges labeled as in~\eqref{diag:sim3} (however, this time we miss the face opposite to $1$). Let $A\in X_3$ be a $3$-simplex which we obtain by applying associativity (see Example~\ref{ex:assoc}) to $\partial_0\Delta^3\cup\partial_2\Delta^3\subset \Lambda_1^3$. The face $d^3_3(A)\in X_2$ of $A$ yields an edge $g$, such that $\mu\colon(b,a)\dashmapsto g$. As $\mu$ is a partial operation, $g$ equals $e$, so $A$ is the desired solution by Theorem~\ref{thm:2coskeletal} . The second case of the lifting problem $\Lambda_2^3\rightarrow X$ is analogous.
\end{proof}

\section{$\epsilon$-simplicial sets}\label{sec:eHorn}
In the previous section, we observed that we can organize data of a relational monoid as a simplicial set. For Frobenius algebras, we need more structure. According to Lemma~\ref{lem:alpha}, a Frobenius algebra is determined by its monoidal structure and its counit $\epsilon$. Hence, a convenient way is to consider simplicial sets where some edges (corresponding to counit elements) are marked; we will call them \emph{$\epsilon$-simplicial sets}. 

In this section, we associate an $\epsilon$-simplicial set with each Frobenius algebra in $\Rel$ and describe the lifting properties corresponding to essential properties of Frobenius algebras.
\begin{definition}
    By $\Sim$ we denote a category that arises from $\Delta$ by adding one additional object $[\eps]$, an extra arrow $w\colon [1]\rightarrow [\eps]$, and all the induced compositions. A presheaf $X\colon\Sim^{\text{op}}\rightarrow \mathrm{Set}$ we call an $\epsilon$-simplicial set, and the category of all $\epsilon$-simplicial sets we denote by $\eSSet$.
\end{definition}
Observe that $\Delta$ is a full subcategory of $\Sim$. For each $n\geq 0$, we have $\Hom([n],[\eps])=\{w\circ h \mid h\colon [n]\rightarrow [1]\}$ and $\Hom([\eps],[n])=\emptyset$. Hence, an $\epsilon$-simplicial set is essentially a simplicial set $X$ together with a mapping $X_\epsilon\rightarrow X_1$.

Another way of describing $\Sim$ is the following pushout in the category of small categories: $$\Sim=\Delta\coprod_{[1]}\{[1]\rightarrow[\epsilon]\}.$$

Given an $\epsilon$-simplicial set $X$, we call an edge $a$ which equals $X(w)(a')$, for some $a'\in X_\epsilon$, an \emph{$\epsilon$-edge}. We may think of elements of $X_\epsilon$ as witnesses of $\epsilon$-edges. 
Note that for an $\epsilon$-edge $a$, there may be several witnesses in $X_\epsilon$. 
In the diagrams, we will indicate the $\epsilon$-edges using thick arrows.
\begin{example}
    A test space is a pair $\mathcal{T}=(X,\mathcal{T})$, where $X$ is a set, and $\mathcal{T}$ 
    a collection of subsets of $X$, the elements of which are called tests, such that $\bigcup\mathcal{T}=X$.
The subsets of tests are called events: $\mathcal{E}(\mathcal{T})={A\subseteq T\mid T\in\mathcal{T}}$.    Given two disjoint events $A$ and $B$, we say $B$ is a local orthocomplement to $A$ if $A\cup B$ is a test. Further, we define a binary relation $\sim$ on events as $A\sim B$ iff $A$ and $B$ admit a common local orthocomplement $C$. The relation $\sim$ is an equivalence, and $\mathcal{E}(\mathcal{T})/\sim$ can be organized as an effect algebra whenever $\mathcal{T}$ satisfies the following property called algebraicity: For each quadruple of events $A,B,C,D$ such that $A\cap B=B\cap C=C\cap D=\emptyset$, we have    
    \begin{align}\label{eq:algebraicity}
        A\cup B\in\mathcal{T}\ \&\ B\cup C\in\mathcal{T}\ \&\ C\cup D\in\mathcal{T}\implies A\cup D\in\mathcal{T}.
    \end{align}
Note that we can organize a test space $\mathcal{T}$ as an $\epsilon$-simplicial set: For $n\geq 0$, $n$-simplices correspond to $n$-tuples of pairwise disjoint events $A_1,\ldots,A_n$ such that $A_1\cup\cdots\cup A_n\subseteq T$ for some test $T$. For $\epsilon$-edges, we take all tests. 
The algebraic property~\eqref{eq:algebraicity} then corresponds to the right lifting property w.r.t.:
    \begin{align}
      \begin{tikzcd}[row sep={3em,between origins}, column sep={3em,between origins}, ampersand replacement=\&,execute at end picture={
\foreach \Nombre in  {x,y,w,o,z}
  {\coordinate (\Nombre) at (\Nombre.center);}
\fill[opacity=0.3,pattern = vertical lines] 
      (x) -- (y)-- (z) -- (w)--(o)--cycle;}]
    |[alias=y]|y \&  \&|[alias=z]|z\arrow[ll,LA=1.5pt]\arrow[dd,LA=1.5pt]\arrow[ld,"C"{description}]\\
    \&|[alias=o]|o\arrow[lu,"B"{description}]\arrow[rd,"D"{description}]\&\\
    |[alias=x]|x\arrow[ru,"A"{description}]\arrow[uu,LA=1.5pt]\&\&|[alias=w]|w
\end{tikzcd} \ \ \ \hookrightarrow\ \ \ 
      \begin{tikzcd}[row sep={3em,between origins}, column sep={3em,between origins}, ampersand replacement=\&,execute at end picture={
\foreach \Nombre in  {x,y,w,o,z}
  {\coordinate (\Nombre) at (\Nombre.center);}
\fill[opacity=0.3,pattern = vertical lines] 
      (x) -- (y)-- (z) -- (w)--cycle;}]
    |[alias=y]|y \&  \&|[alias=z]|z\arrow[ll,LA=1.5pt]\arrow[dd,LA=1.5pt]\arrow[ld,"C"{description}]\\
    \&|[alias=o]|o\arrow[lu,"B"{description}]\arrow[rd,"D"{description}]\&\\
    |[alias=x]|x\arrow[rr,LA=1.5pt]\arrow[ru,"A"{description}]\arrow[uu,LA=1.5pt]\&\&|[alias=w]|w
\end{tikzcd}
\end{align}
\end{example}

Next, let us modify the nerve functor from Section~\ref{sec:3}  for Frobenius algebras as follows:
\begin{definition}\label{ex:SimFrob}
    Let $\mathcal{F}=(F;\mu,\eta,\delta,\epsilon)$ be a Frobenius algebra in $\Rel$. By $N(\mathcal{F})$ we denote an $\epsilon$-simplicial set, where the restriction of $N(\mathcal{F})$ to $\Delta$ is the nerve of the monoid $(F;\mu,\eta)$, $N(\mathcal{F})_\eps=\epsilon\subseteq F$, and the mapping $N(\mathcal{F})_\eps\rightarrow N(\mathcal{F})_1$ coincides with the inclusion $\eps\subseteq F=N(\mathcal{F})_1$.
\end{definition}
\begin{proposition}
    There is a fully faithful embedding $N\colon\RelFrob\hookrightarrow \eSSet$.
\end{proposition}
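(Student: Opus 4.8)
The plan is to mirror the proof of Proposition~\ref{prop:fullfaithful}, upgrading it along the line separating a monoid from a Frobenius algebra, namely the counit $\epsilon$. By Corollary~\ref{cor:frobHom}, a Frobenius algebra homomorphism is precisely a monoid homomorphism that preserves counit elements, so the task reduces to checking that natural transformations between the $\epsilon$-simplicial nerves are exactly the monoid-nerve maps that additionally respect the marking $X_\epsilon$.

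First I would treat \emph{faithfulness}. Since $\Delta$ is a full subcategory of $\Sim$, an $\epsilon$-simplicial set restricts to an ordinary simplicial set, and a natural transformation $N(f)\colon N(\mathcal{F}_1)\to N(\mathcal{F}_2)$ restricts to $N$ of the underlying monoid homomorphism. So if $f\neq g$ as Frobenius homomorphisms, they differ as monoid homomorphisms, hence (by Proposition~\ref{prop:fullfaithful}) already their simplicial nerves differ; a fortiori the $\epsilon$-simplicial nerves differ. This gives faithfulness for free.

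Next, \emph{fullness}. Given a natural transformation $\psi\colon N(\mathcal{F}_1)\to N(\mathcal{F}_2)$ of $\epsilon$-simplicial sets, I would restrict it along $\Delta\hookrightarrow\Sim$ to get a natural transformation $\psi|_\Delta$ between the monoid nerves; by Proposition~\ref{prop:fullfaithful} this is $N(f)$ for a unique monoid homomorphism $f\colon (F_1;\mu_1,\eta_1)\to (F_2;\mu_2,\eta_2)$, determined by $f(a)=\psi_1(a)$ on edges. It remains to verify that $f$ preserves counit elements. Here is the crucial point: naturality of $\psi$ with respect to the extra arrow $w\colon[1]\to[\epsilon]$ says that the square relating $\psi_\epsilon\colon N(\mathcal{F}_1)_\epsilon\to N(\mathcal{F}_2)_\epsilon$ and $\psi_1$ commutes with the two marking maps $N(w)$, which by Definition~\ref{ex:SimFrob} are just the inclusions $\epsilon_i\subseteq F_i$. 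Thus for $e\in\epsilon_1$, viewed as an element of $N(\mathcal{F}_1)_\epsilon$, we get $f(e)=\psi_1(e)=\psi_1(N(w)(e))=N(w)(\psi_\epsilon(e))=\psi_\epsilon(e)\in\epsilon_2\subseteq F_2$, so $f$ preserves counit elements. By Corollary~\ref{cor:frobHom}, $f$ is a Frobenius algebra homomorphism. Finally, $N(f)$ and $\psi$ agree on all of $\Delta$ (they agree on edges, hence everywhere by Theorem~\ref{thm:2coskeletal}), and they agree on $X_\epsilon$ since $\psi_\epsilon(e)=f(e)=N(f)_\epsilon(e)$; as $\Sim$ has only the objects of $\Delta$ together with $[\epsilon]$, this means $\psi=N(f)$.

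The only real obstacle is a bookkeeping one: making sure the ``$2$-coskeletal'' rigidity from Theorem~\ref{thm:2coskeletal} still pins down a natural transformation of $\epsilon$-simplicial sets from its values on edges \emph{and} on $X_\epsilon$. This is immediate once one observes that $\Hom_{\Sim}([\epsilon],[n])=\emptyset$, so an $\epsilon$-simplicial set carries no data beyond the underlying simplicial set plus the single map $X_\epsilon\to X_1$, and a natural transformation is determined levelwise on these; there is no higher-dimensional ``$\epsilon$'' structure to match. With that remark in place the argument is a routine extension of Proposition~\ref{prop:fullfaithful}, and I would keep the write-up short accordingly.
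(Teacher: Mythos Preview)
Your proposal is correct and follows essentially the same approach as the paper: reduce to Proposition~\ref{prop:fullfaithful} for the underlying simplicial sets, then use naturality at the extra arrow $w\colon[1]\to[\epsilon]$ together with Corollary~\ref{cor:frobHom} to handle the counit. The paper compresses all of this into two sentences, whereas you spell out the naturality square and the bookkeeping that $\Hom_{\Sim}([\epsilon],[n])=\emptyset$ explicitly; both arguments are the same in substance.
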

\begin{proof}
Let $\mathcal{F}_1, \mathcal{F}_2\in\RelFrob$. Recalling Proposition~\ref{prop:fullfaithful}, we see that the natural transformations in $\Hom_{\eSSet}(N(\mathcal{F}_1),N(\mathcal{F}_2))$ are in a bijection with the monoid homomorphisms $\mathcal{F}_1\rightarrow\mathcal{F}_2$ which preserve counit elements. The last homomorphisms are precisely the homomorphisms of the Frobenius algebra by Corollary~\ref{cor:frobHom}.
\end{proof}
For $n\geq 1$, denote by $\Sim^n$ an $\epsilon$-simplicial set that arises from $\Delta^n$ by marking the edge $\Delta^{\{0,n\}}$. In analogy to~\eqref{eq:faceAndHorn}, we denote by $\partial_i\Sim^n$ the corresponding face of $\Sim^n$, where the marked edge remains marked if it belongs to $\partial_i\Sim^n$ (i.e., $i\not=0,n$). Moreover, we set 
\begin{align}
\ELambda_i^n=\bigcup_{j\not=i}\partial_j\Sim^n.    
\end{align}
For Frobenius algebras, so-called \emph{$\epsilon$-horns} are important:
\begin{definition}
Let $n\geq 1$. By an $\epsilon$-horn we call an $\epsilon$-simplicial set of the form 
\begin{align}
    \ELambda^n_{0}&\subset \Sim^n \label{eq:domLift0}\text{\ or\ }\\
    \ELambda^n_{n}&\subset\Sim^n. \label{eq:domLiftn}
\end{align}
The set of all $\epsilon$-horns we denote by $\eHorn:=\{\ELambda^n_{0}\mid n\geq 1\}\cup\{\ELambda^n_{n}\mid n\geq 1\}.$
\end{definition}
It turns out that an $\epsilon$-simplicial set which arises from a Frobenius algebra admits a unique filling of all $\epsilon$-horns.
\begin{theorem}\label{thm:epsHorns}
    Let $\mathcal{F}=(F;\mu,\eta,\delta,\epsilon)$ be a Frobenius algebra in $\Rel$. Then the corresponding $\epsilon$-simplicial set $N(\mathcal{F})$ admits {unique} extension property w.r.t. inclusions~\eqref{eq:domLift0} and~\eqref{eq:domLiftn}, with $n\geq 1$.

\end{theorem}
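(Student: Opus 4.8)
\section*{Proof proposal}

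The plan is to prove the statement for the $\epsilon$-horns $\ELambda^n_n \subset \Sim^n$ (the case $\ELambda^n_0$ being dual, via the left-right symmetry $a \mapsto \hat{\alpha}(a)$ of Lemma~\ref{lem:alpha}), by induction on $n$, reducing the general case to small dimensions where $2$-coskeletality (Theorem~\ref{thm:2coskeletal}) does the bookkeeping. First I would dispose of the base case $n=1$: here $\Sim^1$ is just the marked edge $\Delta^{\{0,1\}}$, $\ELambda^1_1 = \partial_1\Sim^1 = \Delta^{\{1\}}$ is a single vertex, and a filling is the choice of a counit element $e \in \epsilon$ with prescribed target $d^1_0(e) = t(e)$; uniqueness is exactly Lemma~\ref{lem:alpha}(ii), which says $e = \hat{\alpha}(r)$ is the unique counit element with target $r$. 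The case $n=2$ is the heart of the matter: a map $\ELambda^2_2 \to N(\mathcal{F})$ consists of edges $e, a$ with $e$ marked, forming the horn $0 \xrightarrow{e} 2$, $1 \xrightarrow{a} 2$ (the face $\partial_2$, i.e.\ the edge $\langle 01\rangle$, is missing), and a filling is a $2$-simplex $(x, y, z) \in N(\mathcal{F})_2$ with $\mu\colon (x,y)\dashmapsto z$, $d^2_0 = a$, $d^2_1 = e$; translating, we need the unique $x$ with $\mu\colon (x, a)\dashmapsto e$ where $e \in \epsilon$, and this is precisely $x = \hat{\beta}(?)$-type data governed by Lemma~\ref{lem:alpha}(iii) together with the bijectivity of $\hat{\alpha}, \hat{\beta}$ — concretely $\mu\colon (x,a)\dashmapsto e$ with $e$ a fixed counit element has a unique solution $x$ because $\delta\colon e \dashmapsto (x, \hat{\alpha}^{-1}(a))$ must hold and $\delta$ applied to a counit element, combined with the counit rule, pins things down; here the marking of $e$ is used essentially.

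For the inductive step $n \geq 3$, I would argue that $\ELambda^n_n$ contains $\partial\Delta^{n-1}$-worth of data among its faces, indeed it contains every $\partial_i\Delta^n$ for $i \neq n$ and hence all $2$-faces of $\Delta^n$; so by $2$-coskeletality the underlying simplicial map $\Lambda^n_n \to N(\mathcal{F})$ extends uniquely to $\Delta^n \to N(\mathcal{F})$ \emph{provided} the resulting candidate $(n-1)$-face $d^n_n$ is consistent, which for $n \geq 4$ is automatic (the classical fact that $\Lambda^n_n \hookrightarrow \Delta^n$ is already enough to determine a map into any $2$-coskeletal simplicial set once $n \geq 4$, since $\Lambda^n_n$ then contains $\partial\Delta^n$ minus one top face but all lower faces). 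The one genuinely new ingredient beyond the simplicial nerve is the marking: the $\epsilon$-horn $\ELambda^n_n$ differs from the ordinary horn $\Lambda^n_n$ only in that the edge $\Delta^{\{0,n\}}$ is marked, and in $\Sim^n$ that same edge is still marked; so a filling as an $\epsilon$-simplicial map exists iff it exists as a simplicial map, with no extra condition — the marking is preserved for free. Thus for $n \geq 3$ the theorem reduces to the plain statement that $N(\mathcal{F})$, as a simplicial set, has unique fillings of $\Lambda^n_n$, which follows from Theorem~\ref{thm:2coskeletal} once one checks the $n=3$ case by hand: a $\Lambda^3_3$-horn needs a face $d^3_3$, i.e.\ an edge $g$ and a $2$-simplex $\mu\colon(g, a)\dashmapsto d$ given the triangles $\mu\colon(b,a)\dashmapsto e$, $\mu\colon(c,e)\dashmapsto d$, $\mu\colon(c,b)\dashmapsto g$ — but wait, in $\ELambda^3_3$ the marked edge $\langle 03\rangle$ carries the constraint that it is a counit, and this is what makes the filling exist and be unique even though plain $\Lambda^3_3$-filling is \emph{not} generally available (that was the cancellation property in Theorem~\ref{thm:alfAndLif}(ii)); so the marking must be used here via Lemma~\ref{lem:alpha}(iii), rewriting the required composite as a $\delta$-statement about the counit and invoking the Frobenius identity exactly as in the string-diagram proof of that lemma.

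The main obstacle, and the step I would spend the most care on, is precisely this $n=3$ (and to a lesser extent $n=2$) computation: showing that marking the long edge $\langle 0n\rangle$ as a counit element converts the \emph{conditional} filling property of ordinary $\Lambda^n_n$-horns into an \emph{unconditional} unique filling. The mechanism is Lemma~\ref{lem:alpha}(iii): the missing face $d^n_n$ records a product $\mu\colon(g, \ast)\dashmapsto \ast$ whose second and third arguments are controlled by the marked counit edge, so that $g$ is forced by the formula $\delta\colon x \dashmapsto (y,z) \Leftrightarrow \mu\colon(\hat\beta(y), x)\dashmapsto z$ applied to a counit element $x = e$, where the counit rule $(\epsilon\times\mathrm{id})\circ\delta = \mathrm{id}$ collapses the ambiguity. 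Once $n=3$ is settled, the induction for $n\geq 4$ is purely formal from $2$-coskeletality, and the dual family $\ELambda^n_0$ follows by applying the same argument to the opposite Frobenius algebra (equivalently, by replacing $\mu\colon(a,b)\dashmapsto c$ with $\mu\colon(b,a)\dashmapsto c$ and swapping the roles of $\hat\alpha$ and $\hat\beta$ and of source and target throughout). I would present the $n=2,3$ arguments with string diagrams mirroring the proof of Lemma~\ref{lem:alpha}(iii), then state the $n\geq 4$ step as a one-line appeal to Theorem~\ref{thm:2coskeletal}.
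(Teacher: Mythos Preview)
Your outline matches the paper's proof: handle $n=1$ via the uniqueness of counit elements with a prescribed source/target (Lemma~\ref{lem:alpha}(ii)), $n=2$ via the bijections $\hat\alpha,\hat\beta$ (Lemma~\ref{lem:alpha}(i)), $n=3$ as the genuine crux using the Frobenius structure, and $n\geq 4$ by pure $2$-coskeletality. You also correctly isolate the key point that the marking of $\langle 0n\rangle$ is what upgrades a conditional outer-horn filling to an unconditional one.

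That said, your execution of the two nontrivial cases is tangled. In the $n=2$ case you announce $\ELambda^2_2$ but then write $\mu\colon(x,a)\dashmapsto e$, which is the $\ELambda^2_0$ equation; for $\ELambda^2_2$ (missing face $\langle 01\rangle$) one needs $\mu\colon(a,x)\dashmapsto e$, solved by $x=\hat\alpha(a)$, not $\hat\beta$. The detour through $\delta\colon e\dashmapsto (x,\hat\alpha^{-1}(a))$ and the counit rule is both unnecessary and, as written, not a correct instance of Lemma~\ref{lem:alpha}(iii); the paper simply takes $x=\hat\beta(a)$ (respectively $\hat\alpha(a)$) and checks the resulting counit agrees with the prescribed one by Lemma~\ref{lem:alpha}(ii). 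In the $n=3$ case your list of ``given'' triangles $\mu\colon(b,a)\dashmapsto e$, $\mu\colon(c,e)\dashmapsto d$, $\mu\colon(c,b)\dashmapsto g$ with ``needed'' $\mu\colon(g,a)\dashmapsto d$ is the data of $\Lambda^3_2$, not $\Lambda^3_3$: for $\ELambda^3_3$ the missing face is $\partial_3=\langle 012\rangle$, i.e.\ one must \emph{produce} $\mu\colon(b,a)\dashmapsto e$ from the other three. More importantly, the actual argument you give for $n=3$ is only a gesture (``rewrite as a $\delta$-statement and invoke the Frobenius identity''). The paper's argument is concrete and you should use it: apply associativity to two of the three given faces to produce an element $z$ that plays the role of the missing edge, then use that two of the relations now assert $\mu\colon(z,a)\dashmapsto e$ and $\mu\colon(y,a)\dashmapsto e$ with $e\in\epsilon$, forcing $z=y=\hat\beta(a)$ by the uniqueness in Lemma~\ref{lem:alpha}(i).
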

\begin{proof}
Let $n=1$, $i=0,1$. The $\epsilon$-horn $\ELambda^n_i$ is isomorphic to $\Delta^0$. Hence $N(\mathcal{F})$ has the unique extension property w.r.t. $\ELambda^n_i\hookrightarrow\Sim^n$ if and only if every vertex in $N(\mathcal{F})$ is a source of a unique $\epsilon$-edge (the case $i=1$) and a target of a unique $\epsilon$-edge (the case $i=0$). This property follows from Lemma~\ref{lem:sourceAndTarget}. 

Let $n=2$. We will prove the case $i=0$ (the case $i=2$ is analogous). Assume an instance of the lifting problem in concern, that is, a morphism $\ELambda^2_0\hookrightarrow N(\mathcal{F})$. The morphism is given by an edge $a\in F$ and an $\epsilon$-edge $e\in\epsilon$ having the same source. We need to find an element $x\in F$, such that $\mu\colon (x,a)\dashmapsto e$. 
\begin{align}\label{defAlphaX}
        \begin{tikzcd}[row sep={1.6em,between origins}, column sep={2.2em,between origins}, ampersand replacement=\&]
    \&  2 \&
    \\[6pt]\&
    \&\\
    0
    \arrow[ruu, LA=1.5pt, "e"{description}]
    \arrow[rr,"a"{description}]
    \&\&
    1
\end{tikzcd}\ \ \hookrightarrow\ \ 
 \begin{tikzcd}[row sep={1.6em,between origins}, column sep={2.2em,between origins}, ampersand replacement=\&,execute at end picture={
\foreach \Nombre in  {A,B,C}
  {\coordinate (\Nombre) at (\Nombre.center);}
\fill[opacity=0.3,pattern = vertical lines] 
      (A) -- (B)-- (C) -- cycle; \foreach \Nombre in  {A,B,C}
  {\coordinate (\Nombre) at (\Nombre.center);}}]
    \&  |[alias=C]|2 \&
    \\[6pt]\&
    \&\\
    |[alias=A]|0
    \arrow[ruu,LA=1.5pt, "e"{description}]
    \arrow[rr,"a"{description}]
    \&\&
    |[alias=B]|1\arrow[uul,"x"{description}]
\end{tikzcd}
\end{align}
Let $x$ be the element $\hat{\beta}(a)$ given by Lemma~\ref{lem:alpha}, point (i). Then there is $f\in\epsilon$ such that $\mu\colon (a,x)\dashmapsto f$. Since $f$ has the same source as $e$, we obtain $e=f$ by Lemma~\ref{lem:alpha} (ii). Because $\hat{\beta}(a)$ is a unique element $x$ having this property, the solution~\eqref{defAlphaX} is indeed unique.

Let $n=3$. Again, we will prove only the case $i=0$. Assume a morphism 
$\phi\colon\ELambda^3_0\rightarrow N(\mathcal{F})$ which we want to extend as $\hat{\phi}\colon\Sim^3\rightarrow N(\mathcal{F})$. Let
$a,b,c,e,x,y\in F$ label edges of $\phi(\ELambda^3_0)$ as follows:
\begin{align}\label{sim:alpha33}
      \begin{tikzcd}[row sep={2.4em,between origins}, column sep={3.3em,between origins}, ampersand replacement=\&]
    \&
    {3}
    \&\\[10pt]\&
    {2}\arrow[u,"c"{description}]
    \&\\
    {0}
    \arrow[ruu,LA=1.5pt,"e"{description}, bend left=5]
    \arrow[rr, "a"{description}, bend right=5]
    \arrow[ru, "{x}"{description}]
    \&\&
    {1}\arrow[uul,"y"{description}, bend right=5]\arrow[lu, "b"'{description,name=f13}]
\end{tikzcd} 
\end{align}

The faces $\phi(\partial_i\Sim^3)$, for $i=1,2,3$, code (1) $\mu\colon (c,x)\dashmapsto e$, (2) $\mu\colon(y,a)\dashmapsto e$, and (3) $\mu\colon (b,a)\dashmapsto x$. Applying associativity to (1) and (3), we get an edge $z\in F$, such that (4) $\mu\colon (c,b)\dashmapsto z$ and (5) $\mu\colon (z,a)\dashmapsto e$. From (2) and (5) we deduce 
$$y=\hat{\beta}(a)=z.$$
Hence (4) gives us the desired face $\hat{\phi}(\partial_0\Delta^3)$ of~\eqref{sim:alpha33}. Due to Theorem~\ref{thm:2coskeletal} (2-coskeletality), we can define $\hat{\phi}$ on  whole $\Sim^3$. By the same theorem, we obtain also the uniqueness.

The remaining cases with $n\geq 4$ follow from the 2-cosketalitity property (Theorem~\ref{thm:2coskeletal}). Indeed, assume a morphism $\phi\colon \ELambda^n_0\rightarrow N(\mathcal{F})$. Since $\partial_0\Sim^n\cap \ELambda^n_0\cong \partial \Delta^{n-1}$, 
we can, by $2$-coskeletality, uniquely extend $\phi$ on the missing face $\partial_0\Sim^n$ ($n-1>2$). Applying $2$-coskeletality again, we uniquely extend $\phi$ onto the whole $\Sim^n$. We can proceed similarly in the case of $\ELambda^n_n$.
\end{proof}
The basic aim of this article is to translate the algebraic structure of Frobenius algebras into the combinatorics of simplices.
In this sense, we define the relations $\mu$, $\hat{\alpha}$, and $\hat{\beta}$ for a general $\epsilon$-simplicial set.
\begin{definition}\label{def:QuasiRel}
    Let $X$ be an $\epsilon$-simplicial set. We define the following relations on the set of edges $\mu\colon X_1\times X_1\dashrightarrow X_1$, $\hat{\alpha}\colon X_1\dashrightarrow X_1$, and $\hat{\beta}\colon X_1\dashrightarrow X_1$. For $a,b,c\in X_1$, we set $\mu\colon (b,a)\dashmapsto c$ if in $X$, there is the left-hand side $2$-simplex of~\eqref{defAlpha}. We will write $\hat{\alpha}\colon b\dashmapsto a$ and $\hat{\beta}\colon a\dashmapsto b$ if in $X$, there is the right-hand side $2$-simplex of~\eqref{defAlpha}.
    \begin{align}\label{defAlpha}
      \begin{tikzcd}[row sep={1.6em,between origins}, column sep={2.2em,between origins}, ampersand replacement=\&,execute at end picture={
\foreach \Nombre in  {A,B,C}
  {\coordinate (\Nombre) at (\Nombre.center);}
\fill[opacity=0.3,pattern = vertical lines] 
      (A) -- (B)-- (C) -- cycle; \foreach \Nombre in  {A,B,C}
  {\coordinate (\Nombre) at (\Nombre.center);}}]
    \&  |[alias=C]|2 \&
    \\[6pt]\&
    \&\\
    |[alias=A]|0
    \arrow[ruu,"c"{description}]
    \arrow[rr,"a"{description}]
    \&\&
    |[alias=B]|1\arrow[uul,"b"{description}]
\end{tikzcd}      \ \ \ \ 
 \begin{tikzcd}[row sep={1.6em,between origins}, column sep={2.2em,between origins}, ampersand replacement=\&,execute at end picture={
\foreach \Nombre in  {A,B,C}
  {\coordinate (\Nombre) at (\Nombre.center);}
\fill[opacity=0.3,pattern = vertical lines] 
      (A) -- (B)-- (C) -- cycle; \foreach \Nombre in  {A,B,C}
  {\coordinate (\Nombre) at (\Nombre.center);}}]
    \&  |[alias=C]|2 \&
    \\[6pt]\&
    \&\\
    |[alias=A]|0
    \arrow[ruu,"e"{description},LA=1.5pt]
    \arrow[rr,"a"{description}]
    \&\&
    |[alias=B]|1\arrow[uul,"b"{description}]
\end{tikzcd}
\end{align}
\end{definition}
\begin{lemma}
    For a Frobenius algebra $\mathcal{F}$, the relations $\mu$, $\hat{\alpha}$, and $\hat{\beta}$ coincide with those of Definition~\ref{def:QuasiRel} for $N(\mathcal{F})$. 
\end{lemma}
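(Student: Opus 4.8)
The plan is to unwind both parts of Definition~\ref{def:QuasiRel} using the explicit description of the low-dimensional simplices of $N(\mathcal{F})$ given just after~\eqref{eq:nerv}, together with the extra $\epsilon$-structure fixed in Definition~\ref{ex:SimFrob}, and to compare them term by term with the Frobenius-algebraic data described in Lemma~\ref{lem:alpha}. Concretely: for the left-hand $2$-simplex in~\eqref{defAlpha}, a $2$-simplex of $N(\mathcal{F})$ is by definition a triple $(b,a,c)\in X_1^3$ with $\mu\colon(b,a)\dashmapsto c$, where the placement of the labels in the diagram matches the face maps $d_0^2,d_2^2,d_1^2$; hence the existence of that $2$-simplex in $N(\mathcal{F})$ is literally the statement $\mu\colon(b,a)\dashmapsto c$ in $\mathcal{F}$. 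So the induced relation $\mu$ of Definition~\ref{def:QuasiRel} agrees with the multiplication relation of $\mathcal{F}$.

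For $\hat\alpha$ and $\hat\beta$, the relevant object is the right-hand $2$-simplex of~\eqref{defAlpha}, whose hypotenuse $\Delta^{\{0,2\}}$ is marked; by Definition~\ref{ex:SimFrob} an edge of $N(\mathcal{F})$ is an $\epsilon$-edge precisely when it lies in $\epsilon\subseteq F$, so such a $2$-simplex is a triple $(b,a,e)$ with $\mu\colon(b,a)\dashmapsto e$ and $e\in\epsilon$. By the definition of $\alpha=\epsilon\circ\mu$ this says exactly $(b,a)\in\alpha$, i.e. $a=\hat\alpha(b)$ equivalently $b=\hat\beta(a)$ by Lemma~\ref{lem:alpha}(i). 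Matching this against the reading convention of Definition~\ref{def:QuasiRel} ($\hat\alpha\colon b\dashmapsto a$ and $\hat\beta\colon a\dashmapsto b$) shows the two pairs of relations coincide.

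The only point requiring a little care is bookkeeping of which vertex/edge of the pictured $2$-simplex corresponds to which face map, i.e. checking that the diagrammatic labelling in~\eqref{defAlpha} is consistent with the explicit formulas $d_0^2\colon(a,b,c)\mapsto a$, $d_1^2\colon(a,b,c)\mapsto c$, $d_2^2\colon(a,b,c)\mapsto b$ listed for $N(\mathcal{M})$, and likewise that the edge drawn as $\Delta^{\{0,2\}}$ is the one that carries the marking. I would verify this once for the left diagram and note that the right diagram has the identical underlying simplex with only the $0$--$2$ edge additionally marked. Once this identification of indices is made, no computation remains: the equality of the relations is a restatement of definitions, so I would keep the written proof to a couple of sentences, citing Definition~\ref{ex:SimFrob}, the explicit face maps of Section~\ref{sec:3}, the identity $\alpha=\epsilon\circ\mu$, and Lemma~\ref{lem:alpha}(i). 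The main (very mild) obstacle is purely notational—making sure the orientation conventions in the two occurrences of the diagram~\eqref{defAlphaX},~\eqref{defAlpha} line up—rather than mathematical.
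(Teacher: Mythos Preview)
Your proposal is correct and follows exactly the approach the paper intends: the paper's own proof is the single sentence ``It is immediate from the construction of $N(\mathcal{F})$,'' and what you have written is precisely the unwinding of that immediacy via the explicit description of $2$-simplices, the $\epsilon$-marking from Definition~\ref{ex:SimFrob}, and Lemma~\ref{lem:alpha}(i). Your bookkeeping of the face maps and the orientation of~\eqref{defAlpha} is accurate.
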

\begin{proof}
    It is immediate from the construction of $N(\mathcal{F})$.
\end{proof}
Let us describe some consequences of the filling of $\ELambda^n_i$, $i=0,n$, for a general $\epsilon$-set $X$. 
\begin{itemize}
    \item The case $n=1$ guarantees that each vertex is both a source of an $\epsilon$-edge and a target of an $\epsilon$-edge.
    \item The case $n=2$ enables us to find for a general edge $a$ an edge $\hat{\alpha}(a)$ ($\hat{\beta}(a)$, resp.) which mimics the rotation $\hat{\alpha}$ ($\hat{\beta}$, resp.) of Frobenius algebras.
    \item The case $n=3$ captures a sort of uniqueness of $\hat{\alpha}(a)$ ($\hat{\beta}(b)$, respectively) from the previous point. Assume an edge $a$ and an $\epsilon$-edge $e$ such as in~\eqref{defAlpha} and let there be two edges $b_1$ and $b_2$ both fitting in~\eqref{defAlpha} (left). Then we can define an $\epsilon$-horn $\phi\colon\ELambda^3_0\rightarrow X$ as follows:
    \begin{align}\label{sim:alpha3}
      \begin{tikzcd}[row sep={2.4em,between origins}, column sep={3.3em,between origins}, ampersand replacement=\&]
    \&
    {3}
    \&\\[10pt]\&
    {2}\arrow[u,"b_1"{description}]
    \&\\
    {0}
    \arrow[ruu,LA=1.5pt,"e"{description}, bend left=5]
    \arrow[rr, "a"{description}, bend right=5]
    \arrow[ru, "a"{description}]
    \&\&
    {1}\arrow[uul,"b_2"{description}, bend right=5]\arrow[lu, "t"'{description,name=f13}]
\end{tikzcd} 
\end{align}
Where the face $\phi(\partial_3\Sim^3)$ equals $s^1_1(a)$, hence $t$ is a degenerate edge. In the case $X=N(\mathcal{F})$, $t=t(a)$ is a unit element in $\eta$, and the face $\hat{\phi}(\partial_0\Sim^3)$, which we get from a filling, yields $\mu\colon (b_1,t)\dashmapsto b_2$, that is, $b_1=b_2$.
\end{itemize}
{\bf Notation:}
    For an edge $a$ of a general $\epsilon$-simplicial set, there may be none or many edges $b$ satisfying $\hat{\alpha}\colon a\dashmapsto b$. 
However, according to the above discussion, if $X$ admits fillings of $\epsilon$-horns, the relation $\hat{\alpha}$ in some sense evokes a bijection.
    In order to emphasize this, we will write $b=\hat{\alpha}(a)$ instead of $\hat{\alpha}\colon a\dashmapsto b$. In diagrams, a label $\hat{\alpha}(a)$ of an edge means any edge $b$ with the property $b=\hat{\alpha}(a)$. The same holds for $\hat{\beta}$.

Since we have defined $\mu$, $\hat{\alpha}$, and $\hat{\beta}$ for a general $\epsilon$-simplicial set $X$, we can also define $\delta$ by mimicking \eqref{eq:delta}. We have two options (use $\hat{\alpha}$ or $\hat{\beta}$); fortunately, both are equivalent if $X$ admits fillings of $\epsilon$ horns.
\begin{lemma}
Let $X$ be an $\epsilon$-simplicial set which has the extension property w.r.t. (\ref{eq:domLift0}--\ref{eq:domLiftn}), and let $a,b,c$ be edges of $X$.     In $X$, there is the left-hand side $2$-simplex in~\eqref{eq:des}, if and only if there is the right-hand side $2$-simplex.
    \begin{align}\label{eq:des}
        \begin{tikzcd}[row sep={1.6em,between origins}, column sep={2.2em,between origins}, ampersand replacement=\&]
    \&  2 \&
    \\[6pt]\&
    \&\\
    0
    \arrow[ruu, "a"{description}]
    \arrow[rr,"\hat{\alpha}(b)"{description}]
    \&\&
    1\arrow[uul,"c"{description}]
\end{tikzcd}\ \ \ \ 
 \begin{tikzcd}[row sep={1.6em,between origins}, column sep={2.2em,between origins}, ampersand replacement=\&]
    \&  2 \&
    \\[6pt]\&
    \&\\
    0
    \arrow[ruu, "b"{description}]
    \arrow[rr,"c"{description}]
    \&\&
    1\arrow[uul,"\hat{\beta}(a)"{description}]
\end{tikzcd}
    \end{align}
\end{lemma}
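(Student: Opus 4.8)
The plan is to reduce the claimed equivalence to the already-established filling properties of $\epsilon$-horns, exactly in the spirit of the computation in Lemma~\ref{lem:alpha}(iii) but carried out synthetically inside $X$ rather than via string diagrams. First I would unpack the two $2$-simplices in~\eqref{eq:des} in terms of the relation $\mu$ of Definition~\ref{def:QuasiRel}: the left-hand $2$-simplex says $\mu\colon (c,\hat\alpha(b))\dashmapsto a$ (reading the faces $\partial_0,\partial_1,\partial_2$ off the triangle), while the right-hand $2$-simplex says $\mu\colon (\hat\beta(a),c)\dashmapsto b$. So the task is to show these two statements are equivalent for any edges $a,b,c$ of $X$, given that $X$ admits fillings of all $\epsilon$-horns. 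Note also that $\hat\alpha$ and $\hat\beta$ are being treated as (essentially) mutually inverse: $b=\hat\alpha^{-1}(\hat\alpha(b))$ in the sense that $\hat\beta(\hat\alpha(b))$ and $b$ have the appropriate relationship, and this will need to be recorded as a preliminary observation, again following Lemma~\ref{lem:alpha}(i)--(ii) and the $n=1,2,3$ discussion preceding the lemma.

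Next I would build the key $3$-simplex. By symmetry it suffices to prove one implication, say left $\Rightarrow$ right. Assuming the left-hand $2$-simplex, I would assemble an $\epsilon$-horn $\ELambda^3_0\hookrightarrow X$ (or $\ELambda^3_3\hookrightarrow X$, whichever orients correctly) whose three given faces encode: the hypothesis $\mu\colon (c,\hat\alpha(b))\dashmapsto a$; the defining $2$-simplex of $\hat\alpha(b)$, i.e. an $\epsilon$-edge $e$ with $\mu\colon (b,\hat\alpha(b))\dashmapsto e$ and $b$ the bottom edge; and the defining $2$-simplex of $\hat\beta(a)$, i.e. an $\epsilon$-edge $e'$ with $\mu\colon (\hat\beta(a),a)\dashmapsto e'$. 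One checks that these three faces are compatible along their shared edges (here the edge $\hat\alpha(b)$ is shared, and the $\epsilon$-edges must be arranged to coincide, which uses the uniqueness statements of Theorem~\ref{thm:epsHorns}/Lemma~\ref{lem:sourceAndTarget}), so they genuinely form a morphism out of an $\epsilon$-horn. Filling this $\epsilon$-horn via Theorem~\ref{thm:epsHorns} produces the missing face, and reading that face off gives precisely $\mu\colon (\hat\beta(a),c)\dashmapsto b$ — the right-hand $2$-simplex of~\eqref{eq:des}. (The filled $3$-simplex also contains one more $2$-simplex, $\mu\colon (c,b)\dashmapsto$ something, but that is an auxiliary byproduct, as in the $n=3$ case of Theorem~\ref{thm:epsHorns}.)

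The converse implication is symmetric: starting from $\mu\colon (\hat\beta(a),c)\dashmapsto b$ one assembles the mirror-image $\epsilon$-horn (now of type $\ELambda^3_3$ if the first was $\ELambda^3_0$, or the same type with rotated labels), fills it, and extracts the left-hand $2$-simplex. I would write the two directions in parallel, pointing out that interchanging $\hat\alpha\leftrightarrow\hat\beta$, $0\leftrightarrow n$, and the two triangles of~\eqref{eq:des} is the relevant symmetry. The main obstacle I anticipate is purely bookkeeping: verifying that the three faces chosen for the $\epsilon$-horn are mutually compatible — that the $\epsilon$-edges witnessing $\hat\alpha(b)$ and $\hat\beta(a)$ can be taken to be \emph{the same} $\epsilon$-edge $e$, and that all common sub-edges match. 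This is exactly where the uniqueness half of Theorem~\ref{thm:epsHorns} (together with Lemma~\ref{lem:alpha}(ii) in the $N(\mathcal{F})$ model, here replaced by the $n=1$ and $n=3$ filling discussion) does the work: the target/source of $a$ determines $e$ uniquely, so the faces glue. Once that compatibility check is in place, the rest is a direct application of the filling and $2$-coskeletality results already proved, and the lemma follows.
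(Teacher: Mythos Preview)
Your approach is essentially the paper's: assemble an $\epsilon$-horn $\ELambda^3_0$ inside $\Sim^3$ whose faces are the left-hand triangle, the $\hat\alpha(b)$-witness, and the $\hat\beta(a)$-witness, fill it, and read off the right-hand triangle as the missing face $\partial_0$; the converse is symmetric. So the strategy is correct.

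There is one genuine issue in your compatibility step. You plan to take a pre-existing $\hat\beta(a)$-witness (with its own $\epsilon$-edge $e'$) and then argue $e=e'$ by invoking the \emph{uniqueness} half of Theorem~\ref{thm:epsHorns}. But that theorem is about $N(\mathcal{F})$, whereas the present lemma is stated for an arbitrary $\epsilon$-simplicial set $X$ that merely has the (not necessarily unique) extension property with respect to $\epsilon$-horns. So uniqueness of the $\epsilon$-edge out of a given vertex is not available, and your gluing argument as written does not go through. The paper sidesteps this cleanly: it starts with only \emph{two} faces, namely the hypothesis triangle ($\partial_3\Sim^3$) and the $\hat\alpha(b)$-witness ($\partial_2\Sim^3$), which already share the edge $\hat\alpha(b)$ and the vertex $0$; it then \emph{constructs} the third face $\partial_1\Sim^3$ by applying the $\ELambda^2_0\hookrightarrow\Sim^2$ filling to the pair $(a,e)$, producing an edge $2\to 3$ that is by definition a value of $\hat\beta(a)$ with the \emph{same} $\epsilon$-edge $e$. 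Compatibility is then automatic, no uniqueness needed. If you replace your ``take three faces and argue $e=e'$'' step by this ``take two faces, manufacture the third via an $\ELambda^2_0$-fill'' step, your proof becomes exactly the paper's.
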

\begin{proof}
    Assume we have the left-hand side $2$-cell. This together with a $2$-simplex witnessing $\hat{\alpha}(b)$ gives us two faces of the following $3$-simplex $\phi\colon \partial_2\Sim^3\cup\partial_3\Sim^3\rightarrow X$:
    \begin{align}
          \begin{tikzcd}[row sep={2.4em,between origins}, column sep={3.3em,between origins}, ampersand replacement=\&]
    \&
    {3}
    \&\\[10pt]\&
    {2}
    \&\\
    {0}
    \arrow[ruu,LA=1.5pt, bend left=5]
    \arrow[rr,"\hat{\alpha}(b)"{description}  bend right=5]
    \arrow[ru, "{a}"{description}]
    \&\&
    {1}\arrow[uul,"{b}"{description}, bend right=5]\arrow[lu, "c"'{description,name=f13}]
\end{tikzcd}\ \ \hookrightarrow\ \ 
          \begin{tikzcd}[row sep={2.4em,between origins}, column sep={3.3em,between origins}, ampersand replacement=\&]
    \&
    {3}
    \&\\[10pt]\&
    {2}\arrow[u,"\hat{\beta}(a)"{description}]
    \&\\
    {0}
    \arrow[ruu,LA=1.5pt, bend left=5]
    \arrow[rr,"\hat{\alpha}(b)"{description}  bend right=5]
    \arrow[ru, "{a}"{description}]
    \&\&
    {1}\arrow[uul,"{b}"{description}, bend right=5]\arrow[lu, "c"'{description,name=f13}]
\end{tikzcd}
    \end{align}
    We can fill the left-hand side to the full $3$-simplex in the following two steps: First, we extend $\phi$ on the face $\Sim^{\{0,2,3\}}$ by applying the extension property over $\ELambda^2_0\hookrightarrow\Sim^2$. The three faces we have form the $\epsilon$-horn $\phi'\colon\ELambda^3_0\rightarrow X$, which we can fill. The last face that we have obtained is the desired right-hand side in \eqref{eq:des}.

    The opposite implication is analogous.
\end{proof}
\begin{definition}\label{def:deltaForSim}
    Let $X$ be an $\epsilon$-simplicial set and $a,b,c\in X_1$. We will write $\delta\colon (a,b)\dashmapsto c$ if there is a $3$-simplex in $X_3$ of the following form:
        \begin{align}\label{eq:deltaD}
          \begin{tikzcd}[row sep={2.4em,between origins}, column sep={3.3em,between origins}, ampersand replacement=\&,execute at end picture={
\foreach \Nombre in  {A,B,C}
  {\coordinate (\Nombre) at (\Nombre.center);}
    \fill[opacity=0.1] (A) -- (B) -- (D) -- cycle;
 \foreach \Nombre in  {A,B,C}
  {\coordinate (\Nombre) at (\Nombre.center);}}]
    \&
    |[alias=C]|3
    \&\\[10pt]\&
    {2}\arrow[u]
    \&\\
    |[alias=A]|0
    \arrow[ruu,LA=1.5pt, bend left=5]
    \arrow[rr,  bend right=5]
    \arrow[ru, "{a}"{description}]
    \&\&
    |[alias=B]|1\arrow[uul,"{b}"{description}, bend right=5]\arrow[lu, "c"'{description,name=f13}]
\end{tikzcd} 
\end{align}
\end{definition}
As in the case of Definition~\ref{def:QuasiRel}, if $X=N(\mathcal{F})$ for some Frobenius algebra $\mathcal{F}$, it is easy to prove that comultiplication $\delta$ of $\mathcal{F}$ coincides with that of Definition~\ref{def:deltaForSim}.

At this point, we can investigate a Frobenius identity for a general $\epsilon$-simplicial set $X$. The three parts of the Frobenius identity~\eqref{eq:frobId} correspond to the following three sub-$\epsilon$-simplicial sets of $\Sim^4$: 
    \begin{align}
       \Sim^{\{0,1,2,4\}} \cup \Delta^{\{0,2,3\}}&\subset \Sim^4 \label{i},\\
        \Sim^{\{0,1,3,4\}} \cup \Delta^{\{1,2,3\}}&\subset \Sim^4, \label{ii}\\
         \Sim^{\{0,2,3,4\}} \cup \Delta^{\{1,2,4\}}&\subset \Sim^4.\label{iii}
    \end{align}
See the following picture:
     \begin{gather*}\label{eq:Defdelta}
          \begin{tikzcd}[row sep={3.6em,between origins}, column sep={5em,between origins}, ampersand replacement=\&,execute at end picture={
\foreach \Nombre in  {A,B,C,E}
  {\coordinate (\Nombre) at (\Nombre.center);}
    \fill[opacity=0.1] (A) -- (B) -- (C) --(E) -- cycle; \foreach \Nombre in  {A,C,D}
  {\coordinate (\Nombre) at (\Nombre.center);}
\fill[pattern=north east lines,opacity=0.3] 
  (D) -- (A) -- (C) -- cycle;
}]
    \&
    |[alias=E]|4
    \&\&\\[10pt]\&
    |[alias=D]|3
    \&\&|[alias=C]|2\arrow[ll,"{a}"{description}]\arrow[llu]\\
    |[alias=A]|0\arrow[rrru]
    \arrow[ruu,LA=1.5pt, bend left=5]
    \arrow[rr, bend right=5]
    \arrow[ru, "{c}"{description}]
    \&\&
    |[alias=B]|1\arrow[uul,"{d}"{description}, bend right=5]\arrow[ru,"b"{description}]\&\\[-15pt]
    \&\eqref{i}\&\&
\end{tikzcd} 
          \begin{tikzcd}[row sep={3.6em,between origins}, column sep={5em,between origins}, ampersand replacement=\&,execute at end picture={
\foreach \Nombre in  {A,B,C}
  {\coordinate (\Nombre) at (\Nombre.center);}
    \fill[opacity=0.1] (A) -- (B) -- (C) -- cycle;
   \foreach \Nombre in  {D,B,C}
  {\coordinate (\Nombre) at (\Nombre.center);}
\fill[pattern=north east lines,pattern color=gray!50] 
  (D) -- (B) -- (C) -- cycle;
}]
    \&
    |[alias=C]|4
    \&\&\\[10pt]\&
    {3}\arrow[u]
    \&\&|[alias=B]|2\arrow[ll,"{a}"{description}]\arrow[llu]\\
    |[alias=A]|0\arrow[rrru]
    \arrow[ruu,LA=1.5pt, bend left=5]
    \arrow[ru, "{c}"{description}]
    \&\&
    |[alias=D]|1\arrow[uul,"{d}"{description}, bend right=5]\arrow[ru,"b"{description}]\&\\[-15pt]
    \&\eqref{iii}\&\&
\end{tikzcd} \end{gather*}
\begin{gather*}
         \begin{tikzcd}[row sep={3.6em,between origins}, column sep={5em,between origins}, ampersand replacement=\&, execute at end picture={
\foreach \Nombre in  {A,B,C}
  {\coordinate (\Nombre) at (\Nombre.center);}
    \fill[opacity=0.1] (A) -- (B) -- (C) -- cycle; 
  \foreach \Nombre in  {D,E,B}
  {\coordinate (\Nombre) at (\Nombre.center);}
\fill[pattern=north east lines,opacity=0.3] 
  (D) -- (E) -- (B) -- cycle;
}]
    \&|[alias=C]|4\&\&\\[10pt]\&
    |[alias=E]|3\arrow[u]
    \&\&|[alias=D]|2\arrow[ll,"{a}"{description}]\\
    |[alias=A]|0
    \arrow[ruu,LA=1.5pt, bend left=5]
    \arrow[rr,  bend right=5]
    \arrow[ru, "{c}"{description}]
    \&\&
    |[alias=B]|1\arrow[uul,"{d}"{description}, bend right=5]\arrow[lu]\arrow[ru,"b"{description}]\&\\[-15pt]
    \&\eqref{ii}\&\&
\end{tikzcd}
\end{gather*}
We will prove in Theorem~\ref{thm:ret}, that for an $\epsilon$-simplicial set $X$ which admits fillings of $\epsilon$-horns, the Frobenius identity (equivalence of (\ref{i}--\ref{iii})) holds whenever $X$ has the extension property w.r.t.: $$ \partial_1\Sim^4\cup\partial_3\Sim^4\hookrightarrow\Sim^4.$$
\section{Weakly saturated classes}
We aim to characterize (up to isomorphism) $\epsilon$-simplicial sets of the form $N(\mathcal{F})$, where $\mathcal{F}$ is a Frobenius algebra in $\Rel$. To achieve this in a convenient way, we need a supply of lifting problems we can solve (behind the generating ones). For these reasons, we introduce the concept of \emph{weakly saturated classes}. 
However, we will not deal with model structures in this article. That will be the aim in the future.
\begin{definition}[\cite{L}]\label{def:saturated}
Given a nonempty class of morphisms $\mathcal{I}$ in a cocomplete category $\mathcal{C}$, we say $\mathcal{I}$ is weakly saturated if it is closed under:
    \begin{enumerate}[(i)]
    \item compositions,
    \item transfinite compositions,
    \item pushouts,
    \item retracts.
\end{enumerate}
\end{definition}
As a consequence, a saturated class is also closed under coproducts and contains all isomorphisms. Recall a relation $\boxslash$ on a class of morphisms. It produces a closure operator on classes of morphisms $\mathcal{I}\mapsto \prescript{\boxslash}{}{(\mathcal{I}^\boxslash)}$.
For many important categories (i.e. presentable ones), the weakly saturated classes are exactly the ones closed under the above closure operator.
\begin{theorem}\label{thm:closure}
    Let $\mathcal{I}$ be a set of morphisms in $\eSSet$. Then $\prescript{\boxslash}{}{(\mathcal{I}^\boxslash)}$ is the least weakly saturated class containing $\mathcal{I}$.
\end{theorem}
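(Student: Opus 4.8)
The plan is to invoke the standard small-object argument, adapted to the locally presentable category $\eSSet$, and to recognize that the three closure conditions in Definition~\ref{def:saturated} together with the closure under retracts are precisely what characterizes the class $\prescript{\boxslash}{}{(\mathcal{I}^\boxslash)}$. First I would note that $\eSSet$ is a category of presheaves over the small category $\Sim$, hence it is locally presentable (in particular cocomplete and with a small dense generator), and every object is presentable. This puts us in the setting where Quillen's small-object argument applies to the \emph{set} $\mathcal{I}$ of morphisms.

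The argument then splits into two inclusions. For the easy inclusion, $\mathcal{I} \subseteq \prescript{\boxslash}{}{(\mathcal{I}^\boxslash)}$ is immediate from the definition of $\boxslash$, and one checks directly that any class of the form $\prescript{\boxslash}{}{(\mathcal{J})}$ — a class of morphisms having the left lifting property against a fixed class $\mathcal{J}$ — is automatically closed under compositions, transfinite compositions, pushouts, and retracts; these are the classical "retract-closure" lemmas for lifting properties (see~\cite{L2009}). Since $\prescript{\boxslash}{}{(\mathcal{I}^\boxslash)}$ is of this form (with $\mathcal{J} = \mathcal{I}^\boxslash$), it is weakly saturated, so it contains the least weakly saturated class containing $\mathcal{I}$. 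For the reverse inclusion, let $\mathcal{W}$ be the least weakly saturated class containing $\mathcal{I}$; I must show $\prescript{\boxslash}{}{(\mathcal{I}^\boxslash)} \subseteq \mathcal{W}$. Given $f \in \prescript{\boxslash}{}{(\mathcal{I}^\boxslash)}$, apply the small-object argument to $\mathcal{I}$ to factor $f = p \circ j$ where $j$ is built from $\mathcal{I}$ by transfinite composition of pushouts of coproducts of maps in $\mathcal{I}$ — hence $j \in \mathcal{W}$ — and $p \in \mathcal{I}^\boxslash$. Since $f$ has the left lifting property against $\mathcal{I}^\boxslash$, in particular against $p$, the retract argument (the standard "if $f \boxslash p$ and $f = p\circ j$ then $f$ is a retract of $j$") shows $f$ is a retract of $j$, hence $f \in \mathcal{W}$ by closure under retracts.

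I expect the main obstacle to be bookkeeping rather than conceptual: one must verify carefully that the small-object argument terminates, which requires that the domains of morphisms in $\mathcal{I}$ are small (presentable) relative to the relevant transfinite chains — this is where local presentability of $\eSSet$ is essential and where one picks a suitable regular cardinal $\kappa$ bounding the presentability ranks of all domains in $\mathcal{I}$. A secondary subtlety is that $\mathcal{I}$ must be a \emph{set} (not a proper class) for the small-object argument to produce an honest factorization; this is exactly the hypothesis of the theorem, and it should be flagged. Once these points are in place, the two inclusions above give the result, and I would remark that this recovers the general principle cited before the statement: in presentable categories the weakly saturated classes are precisely the classes closed under the operator $\mathcal{I} \mapsto \prescript{\boxslash}{}{(\mathcal{I}^\boxslash)}$.
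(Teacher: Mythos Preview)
Your proposal is correct and matches the paper's own approach: the paper simply observes that $\eSSet$ is a cocomplete presentable category and invokes the small-object argument, referring to \cite{MH} for details. Your write-up merely unpacks that invocation (the two inclusions, the factorization $f = p\circ j$, and the retract argument), so there is no substantive difference.
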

\begin{proof}
    Since $\eSSet$ is a cocomplete presentable category, we can apply the small object argument in a standard way. For more details, see~\cite{MH}.
\end{proof}
    In this article, we will not need Theorem~\ref{thm:closure} in its full generality. We will only use that $\prescript{\boxslash}{}{(\mathcal{I}^\boxslash)}$ is closed under finite compositions, pushouts, and retracts, which one can prove by straightforward ``diagram chasing".
    
We will denote the closure of $\mathcal{I}$ in the sense of Theorem~\ref{thm:closure} as $\overline{\mathcal{I}}$.
Note that an $\epsilon$-simplicial set $X$ has the extension property w.r.t. all elements in some class of morphisms $\mathcal{I}$ iff the unique morphism $!_X\colon X\rightarrow \{\bullet\}$ to the terminal object satisfies $\mathcal{I}\ \boxslash\ !_X$. Hence 
\begin{align}\label{eq:sol}
    \mathcal{I}\boxslash X \Longleftrightarrow \mathcal{I}\ \boxslash\ !_X\ \Longleftrightarrow \overline{\mathcal{I}}\boxslash X.
\end{align}
\begin{lemma}\label{lem:pushOut}
    Let $X$ and $Y$ be $\epsilon$-simplicial sets and let $\mathcal{I}$ be a saturated class. If $i\colon X\cap Y\hookrightarrow Y$ belongs to $\mathcal{I}$, then $j\colon X\hookrightarrow X\cup Y$ belongs to $\mathcal{I}$ as well.
\end{lemma}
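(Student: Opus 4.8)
The plan is to recognize this as a standard fact about weakly saturated classes: the map $X \hookrightarrow X \cup Y$ is a pushout of the map $X \cap Y \hookrightarrow Y$. First I would set up the relevant commutative square in $\eSSet$: the intersection $X \cap Y$ sits inside both $X$ and $Y$ (all as sub-$\epsilon$-simplicial sets of some ambient object, or more abstractly via the inclusions), and the square
\[
\begin{tikzcd}[ampersand replacement=\&]
{X\cap Y}\arrow[d,"i"']\arrow[r]\&{X}\arrow[d,"j"]\\
{Y}\arrow[r]\&{X\cup Y}
\end{tikzcd}
\]
commutes. The key step is to verify that this square is a pushout in $\eSSet$. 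Since $\eSSet$ is a presheaf category, colimits are computed objectwise, so it suffices to check that for each object $[n]$ (and $[\eps]$) of $\Sim$, the square of sets
\[
\begin{tikzcd}[ampersand replacement=\&]
{(X\cap Y)_n}\arrow[d]\arrow[r]\&{X_n}\arrow[d]\\
{Y_n}\arrow[r]\&{(X\cup Y)_n}
\end{tikzcd}
\]
is a pushout; but for subsets of a common set this is the elementary fact that $X_n \cup Y_n$ is the pushout of $X_n \leftarrow X_n \cap Y_n \rightarrow Y_n$ (the amalgamated union of sets along their intersection). Once the square is identified as a pushout, the conclusion $j \in \mathcal{I}$ follows immediately from closure of $\mathcal{I}$ under pushouts, which is part of the definition of a weakly saturated class (Definition~\ref{def:saturated}(iii)), using that $i \in \mathcal{I}$ by hypothesis.

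I would make one remark about the setup: the lemma is implicitly assuming $X$ and $Y$ are sub-objects of some common $\epsilon$-simplicial set, so that $X \cap Y$ and $X \cup Y$ make sense; with that understood, $X \cap Y$, $X$, $Y$, $X \cup Y$ all have compatible inclusions and the intersection really is the pullback while the union is the pushout. The main (and only) obstacle is the bookkeeping of verifying the pushout property objectwise, which is routine because colimits in presheaf categories are pointwise and unions/intersections of subsets behave as expected. No transfinite machinery or the small object argument is needed here — only the elementary closure under pushouts.
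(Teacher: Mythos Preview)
Your proposal is correct and takes essentially the same approach as the paper: the paper's proof simply notes that colimits in $\eSSet$ are computed pointwise, so $j$ is a pushout of $i$, and then invokes closure of $\mathcal{I}$ under pushouts. Your version spells out the objectwise verification in more detail, but the argument is identical.
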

\begin{proof}
    Since colimits in $\eSSet$ are computed pointwise, we see that $j$ is a pushout of $i$. Therefore, $j$ belongs to $\mathcal{I}$ whenever $i$ does.
\end{proof}
As an example, let as show that $\partial_3\Sim^3\hookrightarrow\Sim^3$ belongs to $\overline{\eHorn}$.
\begin{equation*}
          \begin{tikzcd}[row sep={2.9em,between origins}, column sep={4em,between origins}, ampersand replacement=\&]
    \&
    {\ }
    \&\\[10pt]\&
    {2}
    \&\\
    {0}
    \arrow[rr, "a"{description}, bend right=5]
    \arrow[ru, "{c}"{description}]
    \&\&
    {1}\arrow[lu, "b"'{description,name=f13}]
\end{tikzcd} \xhookrightarrow{(1)}
          \begin{tikzcd}[row sep={2.9em,between origins}, column sep={4em,between origins}, ampersand replacement=\&]
    \&
    {3}
    \&\\[10pt]\&
    {2}\arrow[u,"\beta(c)"{description}]
    \&\\
    {0}
    \arrow[ruu,"{e}"{description},LA=1.5pt, bend left=5]
    \arrow[rr, "a"{description}, bend right=5]
    \arrow[ru, "{c}"{description}]
    \&\&
    {1}\arrow[lu, "b"'{description,name=f13}]
\end{tikzcd} \xhookrightarrow{(2)}
          \begin{tikzcd}[row sep={2.9em,between origins}, column sep={4em,between origins}, ampersand replacement=\&]
    \&
    {3}
    \&\\[10pt]\&
    {2}\arrow[u,"\beta(c)"{description}]
    \&\\
    {0}
    \arrow[ruu,"{e}"{description},LA=1.5pt, bend left=5]
    \arrow[rr, "a"{description}, bend right=5]
    \arrow[ru, "{c}"{description}]
    \&\&
    {1}\arrow[lu, "b"'{description,name=f13}]\arrow[luu,"\beta(a)"{description}]
\end{tikzcd}
    \end{equation*}
In the first step (1) we find an $\epsilon$-edge $e$ with a source $0$ and we fill the face $\Sim^{\{0,2,3\}}$ applying lifting over $\ELambda^2_0\hookrightarrow\Sim^2$. In the second step (2), we first fill the face $\Sim^{\{0,1,3\}}$, so we get $\ELambda^3_0$, which we can fill to the full $\Sim^3$. Note that we have applied Lemma~\ref{lem:pushOut} several times, e.g., in the second step (2), we have $X=\partial_1\Sim^3\cup\partial_3\Sim^3$, $Y=\partial_2\Sim^3$, and $X\cap Y=\{a\}\cup \{e\}$. Similarly, we have used the composition rule ((i) in Definition~\ref{def:saturated}).
\begin{theorem}\label{thm:lifting1}
    Let $n\in\mathbb{N}$ and $I\subseteq\{0,\ldots,n\}$ be such that $|I\cap \{0,n\}|=1$. Then
    \begin{align}\label{eq:genLift}
        \bigcup_{i\in I}\partial_{i}\Sim^n\hookrightarrow \Sim^n
    \end{align}
    belongs to $\overline{\eHorn}$.
\end{theorem}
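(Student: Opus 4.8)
The plan is to induct on $|I|$, using the $\epsilon$-horns $\ELambda^n_0 \subset \Sim^n$ and $\ELambda^n_n \subset \Sim^n$ (which are in $\overline{\eHorn}$ by definition) as the base case $|I| = n$, and peeling off one face at a time via Lemma~\ref{lem:pushOut} and $2$-coskeletality. Without loss of generality assume $0 \in I$ and $n \notin I$ (the other case is symmetric by the left-right duality of $\Sim^n$). The strategy is: starting from the full $\epsilon$-horn $\ELambda^n_0$ and working downward, I want to show that adding back faces $\partial_i\Sim^n$ for $i \notin I$ one at a time produces a chain of inclusions, each of which lies in $\overline{\eHorn}$, so that by the composition and pushout closure of $\overline{\eHorn}$ the composite $\bigcup_{i\in I}\partial_i\Sim^n \hookrightarrow \Sim^n$ does too. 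Equivalently, run the induction the other way: if $|I| \geq 2$ and the result holds for all larger index sets with the parity constraint, pick $j \notin I$ with $j \neq 0, n$ (such $j$ exists since $|I| \leq n$ and $n \in \{0,\dots,n\}\setminus I$), set $I' = I \cup \{j\}$, and factor \eqref{eq:genLift} as
\begin{equation*}
    \bigcup_{i\in I}\partial_i\Sim^n \hookrightarrow \bigcup_{i\in I'}\partial_i\Sim^n \hookrightarrow \Sim^n.
\end{equation*}
The second map is in $\overline{\eHorn}$ by the inductive hypothesis (note $I'$ still satisfies $|I' \cap \{0,n\}| = 1$ since $j \neq 0, n$), so it suffices to show the first map is in $\overline{\eHorn}$.

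For that first map, I would apply Lemma~\ref{lem:pushOut} with $X = \bigcup_{i\in I}\partial_i\Sim^n$ and $Y = \partial_j\Sim^n$: then $X \cup Y = \bigcup_{i\in I'}\partial_i\Sim^n$ and $X \cap Y = \bigcup_{i \in I}(\partial_i\Sim^n \cap \partial_j\Sim^n)$, which is the union of the codimension-$2$ faces of $\partial_j\Sim^n \cong \Sim^{n-1}$ (or $\Delta^{n-1}$ when $j$... but here $j \neq 0, n$ so $\partial_j\Sim^n$ is itself an $\epsilon$-simplicial set of the form $\Sim^{n-1}$ with its marked edge) indexed by $I$. So $X \cap Y \hookrightarrow Y$ is, up to reindexing, an inclusion of the same shape as \eqref{eq:genLift} but in dimension $n-1$: it is $\bigcup_{i \in \tilde I}\partial_i\Sim^{n-1} \hookrightarrow \Sim^{n-1}$ for the appropriate $\tilde I \subseteq \{0,\dots,n-1\}$. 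The key bookkeeping point — and the step I expect to require the most care — is checking that $\tilde I$ still satisfies the parity constraint $|\tilde I \cap \{0, n-1\}| = 1$: the face $\partial_j\Sim^n$ has vertices $\{0,\dots,n\}\setminus\{j\}$, its "$0$-vertex" is $0$ and its "top vertex" is $n$ (since $j \neq 0, n$), and its marked edge is $\Sim^{\{0,n\}}$; the faces of $\partial_j\Sim^n$ meeting $X$ correspond to $\{d_i : i \in I\}$, which after reindexing the vertex set $\{0,\dots,n\}\setminus\{j\}$ as $\{0,\dots,n-1\}$ sends $0 \mapsto 0$ and $n \mapsto n-1$; one must verify that exactly one of these two "extremal" faces is excluded, which follows because $0 \in I$ and $n \notin I$. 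This gives $X \cap Y \hookrightarrow Y$ in $\overline{\eHorn}$ by the (strong) inductive hypothesis at level $n-1$, hence $X \hookrightarrow X \cup Y$ in $\overline{\eHorn}$ by Lemma~\ref{lem:pushOut}, completing the induction.

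The base of the induction needs a separate remark: when $n - 1 \leq 2$ the inductive appeal to "level $n-1$" either lands on the already-established cases (the worked example before the theorem shows $\partial_3\Sim^3 \hookrightarrow \Sim^3 \in \overline{\eHorn}$, and analogous low-dimensional fillings follow from $2$-coskeletality together with the $n \leq 3$ cases of the $\epsilon$-horn fillings) or degenerates to an isomorphism. I would also record the outermost base case $|I| = n$ explicitly: then $I = \{0,\dots,n\}\setminus\{k\}$ for some unique $k$, and the parity constraint forces $k \neq 0, n$... wait, no — $|I| = n$ means exactly one index is missing, and $|I \cap \{0,n\}| = 1$ forces the missing index to be $0$ or $n$; so $\bigcup_{i \in I}\partial_i\Sim^n$ is precisely $\ELambda^n_0$ or $\ELambda^n_n$, which is in $\eHorn \subseteq \overline{\eHorn}$ by definition. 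The main obstacle, as flagged, is purely combinatorial: correctly tracking how the marked edge and the extremal vertices of $\Sim^n$ restrict to the face $\partial_j\Sim^n$ and confirming the parity condition is inherited, so that the induction is actually well-founded and never appeals to a case outside the stated hypothesis of the theorem.
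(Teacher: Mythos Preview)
Your argument is essentially the paper's: factor $\bigcup_{i\in I}\partial_i\Sim^n \hookrightarrow \Sim^n$ by adjoining one missing face $\partial_j\Sim^n$ (with $j\neq 0,n$) at a time, and use Lemma~\ref{lem:pushOut} together with the observation that $\partial_j\Sim^n \cap \bigcup_{i\in I}\partial_i\Sim^n \hookrightarrow \partial_j\Sim^n$ is isomorphic to $\bigcup_{i\in\tilde I}\partial_i\Sim^{n-1}\hookrightarrow\Sim^{n-1}$ with $\tilde I$ again satisfying the parity constraint. The paper records the reindexing explicitly as $\tilde I=\{i\in I: i<j\}\cup\{i-1: i\in I,\, i>j\}$, which is exactly your bookkeeping.

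Two points of cleanup. First, your induction scheme is tangled: you announce induction on $|I|$ but then invoke ``the (strong) inductive hypothesis at level $n-1$'', which is a different parameter. The paper simply inducts on $n$; at a fixed $n$ the ``add one face'' step is iterated (not inducted) until $I$ becomes $\{0,\dots,n-1\}$, i.e.\ an $\epsilon$-horn. Your argument is correct once recast as induction on $n$, with the inner loop over $j\notin I$. (Relatedly, your restriction ``$|I|\ge 2$'' is spurious: the step works verbatim for $|I|=1$.) Second, your appeal to $2$-coskeletality in the low-dimensional base cases is misplaced: $2$-coskeletality is a property of $N(\mathcal F)$, not a morphism in $\overline{\eHorn}$, and plays no role here. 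The genuine base case is $n=1$, where \eqref{eq:genLift} is literally an $\epsilon$-horn; everything above that follows from the inductive step you've already described.
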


\begin{proof}
    We use induction on $n$. In the case $n=1$, the embedding~\eqref{eq:genLift} coincides with~\eqref{eq:domLift0} or~\eqref{eq:domLiftn}. Let $n\geq 2$ and assume the claim holds for $n-1$. Without loss of generality, assume $I\cap \{0,n\}=\{0\}$; the case $I\cap \{0,n\}=\{n\}$ is analogous.

    If $I=\{0,\ldots,n-1\}$, then \eqref{eq:genLift} is an $\epsilon$-horn. Otherwise, there is $j\in\{1,\dots, n-1\}$ such that $j\notin I$. We claim that the following embedding belongs to $\overline{\eHorn}$
    \begin{equation}
         \bigcup_{i\in I}\partial_{i}\Sim^n\hookrightarrow  \partial_j\Sim^n\cup\bigcup_{i\in I}\partial_{i}\Sim^n. \label{eq:embi}
    \end{equation}
    Indeed, $\partial_j\Sim^n\cap \bigcup_{i\in I}\partial_{i}\Sim^n\hookrightarrow \partial_j\Sim^n$ is isomorphic to $\bigcup_{i\in\Tilde{I}}\partial_i\Sim^{n-1}\hookrightarrow \Sim^{n-1}$, where $\Tilde{I}=\{i \mid i\in I, i<j\}\cup\{i-1\mid i\in I, i> j\}$. The latter embedding belongs to $\overline{\eHorn}$ by the induction hypothesis, hence \eqref{eq:embi} belongs to $\overline{\eHorn}$ by Lemma~\ref{lem:pushOut}. Repeating this for each $j\in\{1,\ldots,n-1\}$, $j\notin I$, we achieve the case $I=\{0,\ldots,n-1\}$.
\end{proof}

Note that the example before Theorem~\ref{thm:lifting1} describes the case of $I=\{3\}\subseteq\{0,1,2,3\}$. More generally, assume any $n$-simplex $x\colon\partial_{n+1}\Sim^{n+1}\cong\Delta^n\rightarrow X$, where $X$ admits fillings of $\epsilon$-horns. Then $x$ admits an extension $\hat{x}\colon\Sim^{n+1}\rightarrow X$ by Theorem~\ref{thm:lifting1}. The restriction of $\hat{x}$ onto $\partial_0\Sim^{n+1}\cong \Delta^n$: 

$$\hat{\alpha}(x)\colon\partial_0\Sim^{n+1}\cong\Delta^n\rightarrow X$$
may be interpreted as a rotation of $x$. In this way, one can simulate the $\mathbb{Z}$-action on simplices discussed in~\cite{MZ} using our framework.

Recall that associativity is a consequence of a Frobenius identity (Remark~\ref{rem:Asoc}). 
The following lemma provides a subtle comparison of the strength of the associativity rule and the Frobenius rule modeled with simplicial combinatorics.\begin{lemma}\label{lem:subtleComparison}
    Denote the following four sets of morphisms in $\eSSet$:
    \begin{enumerate}[(i)]
    \item $I_1=\eHorn\cup\{\partial_1\Sim^4\cup\partial_3\Sim^4\hookrightarrow\Sim^4\}$,
        \item $I_2=\eHorn\cup\{\partial_2\Sim^4\cup\Delta^{\{1,2,3\}}\hookrightarrow\Sim^4\}$,
        \item $I_3=\eHorn\cup\{\partial_1\Delta^3\cup\partial_3\Delta^3\hookrightarrow \Delta^3\}$,
        \item $I_4=\eHorn\cup\{\partial_0\Delta^3\cup\partial_2\Delta^3\hookrightarrow \Delta^3\}$.
    \end{enumerate}
    Then $\overline{I}_1\cup\overline{I}_2\subseteq \overline{I}_3\cap\overline{I}_4$.
\end{lemma}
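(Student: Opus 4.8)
The plan is to show each of the two inclusions $\overline{I}_1\subseteq\overline{I}_3\cap\overline{I}_4$ and $\overline{I}_2\subseteq\overline{I}_3\cap\overline{I}_4$ by producing, for the single ``extra'' generator of $I_1$ (resp.\ $I_2$), an explicit construction witnessing that it lies in $\overline{I}_3$ and in $\overline{I}_4$; since $\eHorn$ is common to all four sets and $\overline{(\,\cdot\,)}$ is monotone and idempotent, this suffices. Concretely, to see that $\partial_1\Sim^4\cup\partial_3\Sim^4\hookrightarrow\Sim^4$ belongs to $\overline{I}_3$, I would factor this inclusion through a chain of intermediate $\epsilon$-subsimplicial sets of $\Sim^4$, each step of which is either an $\epsilon$-horn filling or a pushout of the associativity generator $\partial_1\Delta^3\cup\partial_3\Delta^3\hookrightarrow\Delta^3$ (using Lemma~\ref{lem:pushOut} to recognize pushouts as intersections of $3$-faces inside $\Sim^4$). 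The same $\Sim^4$ is then attacked a second time using instead the $I_4$-generator $\partial_0\Delta^3\cup\partial_2\Delta^3\hookrightarrow\Delta^3$; by the symmetry of the roles of $\Lambda^3_1$ and $\Lambda^3_2$ already exploited in Theorem~\ref{thm:alfAndLif}(iii), the two associativity generators are interchangeable here, so the $I_4$ computation is a mirror image of the $I_3$ one.

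The key steps, in order, are: (1) Fix $\epsilon$-edges where needed on the faces of $\Sim^4$ by filling $\ELambda^2_0$ (or $\ELambda^2_4$) — exactly as in the worked example preceding Theorem~\ref{thm:lifting1} — to reduce from $\partial_1\Sim^4\cup\partial_3\Sim^4$ to a configuration in which one can fill an $\epsilon$-horn $\ELambda^4_0$ or $\ELambda^4_4$; by Theorem~\ref{thm:lifting1}, all the ``partial-boundary'' inclusions $\bigcup_{i\in I}\partial_i\Sim^n\hookrightarrow\Sim^n$ with $|I\cap\{0,n\}|=1$ are already in $\overline{\eHorn}$, so I may freely use any such filling as a move. (2) Identify, inside $\Sim^4$, a copy of $\Delta^3$ (spanned by four of the five vertices) on which the missing $3$-face amounts to an associativity lift; apply the $I_3$-generator there via Lemma~\ref{lem:pushOut} with $X$ the already-constructed subcomplex, $Y$ the copy of $\Delta^3$, and $X\cap Y$ its associativity horn $\partial_1\Delta^3\cup\partial_3\Delta^3$. (3) Iterate (1)--(2) until the whole of $\Sim^4$ is built, invoking $2$-coskeletality (Theorem~\ref{thm:2coskeletal}) for the top cell. (4) Repeat the whole argument for $I_2$'s generator $\partial_2\Sim^4\cup\Delta^{\{1,2,3\}}\hookrightarrow\Sim^4$, and then run both arguments again with $\partial_0\Delta^3\cup\partial_2\Delta^3$ in place of $\partial_1\Delta^3\cup\partial_3\Delta^3$ to land in $\overline{I}_4$.

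I expect the main obstacle to be bookkeeping: correctly choosing, for each of the (up to) four target generators, the order in which to fill the auxiliary $2$-faces (which $\epsilon$-edges to create and on which $3$-faces) so that at every stage the next face to be added is genuinely an $\epsilon$-horn or an associativity pushout and not some other lifting problem — in particular making sure the intersection $X\cap Y$ in each application of Lemma~\ref{lem:pushOut} is \emph{exactly} $\partial_j\Delta^3\cap(\text{rest})\cong$ the associativity horn, with the marked edge landing where it should (i.e.\ $\Delta^{\{0,4\}}$ inside $\Sim^4$, never on a face that must remain unmarked). A secondary subtlety is verifying that the $I_3\leftrightarrow I_4$ exchange is legitimate here, i.e.\ that in the specific $3$-dimensional sub-problems that arise one really can use either associativity generator; this is plausible because in all these sub-problems one of the two ``cancellation-free'' directions degenerates (a unit or counit edge appears), but it should be checked case by case rather than asserted. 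Once the face-filling schedule is pinned down, each individual move is routine by the results quoted above.
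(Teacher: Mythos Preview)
Your overall strategy---factor the extra generator as a composite of pushouts of the associativity map and morphisms already in $\overline{\eHorn}$---is exactly what the paper does, and it works. However, two points in your plan are off.

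First, step~(3) invokes $2$-coskeletality (Theorem~\ref{thm:2coskeletal}) to fill the top $4$-cell. This is not a legal move: that theorem is a property of the specific simplicial sets $N(\mathcal{M})$, not a statement that $\partial\Delta^n\hookrightarrow\Delta^n$ lies in any of the saturated classes $\overline{I}_j$ (it does not, since the full face-index set violates the hypothesis $|I\cap\{0,n\}|=1$ of Theorem~\ref{thm:lifting1}). The lemma is a statement about morphisms in $\eSSet$, independent of any target $X$, so nothing about $N(\mathcal{F})$ can be invoked; your remark about ``a unit or counit edge appearing'' in the secondary subtlety is a symptom of the same conflation. The correct move for the top cell is Theorem~\ref{thm:lifting1} itself, applied once the face set contains exactly one of $0,4$.

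Second, the argument is shorter and more structured than your plan anticipates, and the $I_3$/$I_4$ dichotomy is not a delicate case check but the core mechanism. Starting from $\partial_1\Sim^4\cup\partial_3\Sim^4$, no $\overline{\eHorn}$-move is available (neither face $0$ nor face $4$ is present, and one checks that adding face $2$ is not an $\overline{\eHorn}$-move either). The first move must therefore be the associativity pushout, and computing intersections shows that attaching $\partial_4\Sim^4$ is a pushout of precisely $\partial_1\Delta^3\cup\partial_3\Delta^3\hookrightarrow\Delta^3$ (the $I_3$ generator), while attaching $\partial_0\Sim^4$ is a pushout of precisely $\partial_0\Delta^3\cup\partial_2\Delta^3\hookrightarrow\Delta^3$ (the $I_4$ generator). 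After that single step the remaining inclusion into $\Sim^4$ lies in $\overline{\eHorn}$ by Theorem~\ref{thm:lifting1}. So the proof for $\phi_1$ is two moves, and the choice of which outer face to attach \emph{is} the reason both $\overline{I}_3$ and $\overline{I}_4$ work---no degenerate edges enter. The generator $\phi_2$ is handled identically.
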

\begin{proof}
We only need to prove that both embeddings $\phi_1\colon\partial_1\Sim^4\cup\partial_3\Sim^4\hookrightarrow\Sim^4$ and $\phi_2\colon \partial_2\Sim^4\cup\Delta^{\{1,2,3\}}\hookrightarrow\Sim^4$ belong to $\overline{I}_3$ and $\overline{I}_4$.

Let us begin with the proof of $\phi_1\in\overline{I}_4$. Consider the following diagram of inclusions where the square is a pushout:
\begin{equation}\label{diag:asoVsFrob}
    \begin{tikzcd}
        {\partial_0\Sim^4\cap (\partial_1\Sim^4\cup\partial_3\Sim^4)}\arrow[r,hook]\arrow[d,hook]&{\partial_0\Sim^4}\arrow[d,hook]&\\
        {\partial_1\Sim^4\cup\partial_3\Sim^4}\arrow[r,hook]&{\partial_0\Sim^4\cup\partial_1\Sim^4\cup\partial_3\Sim^4}\arrow[r,hook]&\Sim^4
    \end{tikzcd}
\end{equation}
    The upper inclusion equals $\Delta^{\{2,3,4\}}\cup\Delta^{\{1,2,4\}}\subset \Delta^{\{1,2,3,4\}}$ which is isomorphic to $\partial_0\Delta^3\cup\partial_2\Delta^3\hookrightarrow \Delta^3\in I_4$. Using Lemma~\ref{lem:pushOut} and Theorem~\ref{thm:lifting1}, we deduce $\phi_1$ is a composition of two morphisms in $\overline{I}_4$, hence itself belongs to $\overline{I}_4$.  

We can prove $\phi_1\in\overline{I}_3$ in a similar way. In~\eqref{diag:asoVsFrob}, we only replace $\partial_0\Sim^4$ with $\partial_4\Sim^4$ and observe that the upper inclusion $\Delta^{\{0,2,3\}}\cup\Delta^{\{0,1,2\}}\subset \Delta^{\{0,1,2,3\}}$ is isomorphic to $\partial_1\Delta^3\cup\partial_3\Delta^3\hookrightarrow \Delta^3\in I_4$.

Next we prove $\phi_2\in\overline{I}_3$. The argument is very similar to the one above. However, this time we use the following diagram. 
    \begin{equation}\label{diag:asoVsFrob2}
    \begin{tikzcd}
        {\partial_0\Sim^4\cap (\partial_2\Sim^4\cup\Delta^{\{1,2,3\}})}\arrow[r,hook]\arrow[d,hook]&{\partial_0\Sim^4}\arrow[d,hook]&\\
        {\partial_2\Sim^4\cup\Delta^{\{1,2,3\}}}\arrow[r,hook]&{\partial_0\Sim^4\cup\partial_2\Sim^4}\arrow[r,hook]&\Sim^4
    \end{tikzcd}
\end{equation}
In \eqref{diag:asoVsFrob2}, the upper inclusion $\Delta^{\{1,3,4\}}\cup\Delta^{\{1,2,3\}}\subset\Delta^4$ is isomorphic to $\partial_1\Delta^3\cup\partial_3\Delta^3\subset\Delta^3$.

The last case $\phi_2\in\overline{I}_3$ we can deduce using a diagram similar to~\eqref{diag:asoVsFrob2} where we replace $\partial_0\Sim^4$ with $\partial_4\Sim^4$. 
\end{proof}
\begin{lemma}\label{lem:equiv}
    Let $X$ be an $\epsilon$-simplicial set and $I_1,\ldots,I_4$ be as in Lemma~\ref{lem:subtleComparison}. We have
    \begin{align}
        I_1\boxslash X\Leftrightarrow I_2\boxslash X\Leftrightarrow  I_3\boxslash X\Leftrightarrow I_4\boxslash X.
    \end{align}
\end{lemma}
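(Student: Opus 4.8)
The plan is to derive the equivalence from Lemma~\ref{lem:subtleComparison} together with~\eqref{eq:sol}, and then to fill the remaining gap by an explicit construction inside $\Sim^4$ which is the simplicial avatar of ``the Frobenius identity implies associativity'' (Remark~\ref{rem:Asoc}). By~\eqref{eq:sol} we have $I_j\boxslash X\Leftrightarrow\overline{I_j}\boxslash X$ for each $j$, so Lemma~\ref{lem:subtleComparison} ($\overline{I_1}\cup\overline{I_2}\subseteq\overline{I_3}\cap\overline{I_4}$) already gives $I_3\boxslash X\Rightarrow I_1\boxslash X$, $I_3\boxslash X\Rightarrow I_2\boxslash X$, $I_4\boxslash X\Rightarrow I_1\boxslash X$ and $I_4\boxslash X\Rightarrow I_2\boxslash X$. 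It therefore remains to prove the converses $I_1\boxslash X\Rightarrow I_3\boxslash X$, $I_1\boxslash X\Rightarrow I_4\boxslash X$, $I_2\boxslash X\Rightarrow I_3\boxslash X$ and $I_2\boxslash X\Rightarrow I_4\boxslash X$; together with the above four they close into the asserted chain.

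All four converses are handled the same way; consider $I_1\boxslash X\Rightarrow I_3\boxslash X$. Let $h\colon\partial_1\Delta^3\cup\partial_3\Delta^3\rightarrow X$ be an instance of the associativity lifting problem (two $2$-simplices of $X$ glued along an edge, together with the edges and vertices of the horn). Realize $\Delta^3$ as the \emph{unmarked} face $\partial_4\Sim^4=\Delta^{\{0,1,2,3\}}$ of $\Sim^4$, so that the domain of $h$ becomes the subcomplex $\Delta^{\{0,2,3\}}\cup\Delta^{\{0,1,2\}}$, while the marked edge $\Delta^{\{0,4\}}$ and the vertex $4$ carry no data. I extend $h$ along $\Delta^{\{0,2,3\}}\cup\Delta^{\{0,1,2\}}\hookrightarrow\Sim^4$ in finitely many steps, each a pushout of a morphism from $\eHorn\cup\{\partial_1\Sim^4\cup\partial_3\Sim^4\hookrightarrow\Sim^4\}=I_1$ (colimits in $\eSSet$ being pointwise, cf.\ Lemma~\ref{lem:pushOut}), hence each possible since $X$ lifts against that morphism:
\begin{enumerate}[(1)]
    \item adjoin the marked edge $\Delta^{\{0,4\}}$ (with source the image of the vertex $0$) and the vertex $4$ by filling a $1$-dimensional $\epsilon$-horn $\ELambda^1_i\hookrightarrow\Sim^1$;
    \item for $k=1,2,3$, fill a $2$-dimensional $\epsilon$-horn $\ELambda^2_i\hookrightarrow\Sim^2$ on the triangle $\Delta^{\{0,k,4\}}$, whose edges $\Delta^{\{0,k\}}$ and $\Delta^{\{0,4\}}$ share the source $0$, producing the edge $\Delta^{\{k,4\}}$ and the $2$-simplex $\Delta^{\{0,k,4\}}$;
    \item the faces $\partial_1\Sim^4=\Sim^{\{0,2,3,4\}}$ and $\partial_3\Sim^4=\Sim^{\{0,1,2,4\}}$ are now present except for the single $2$-faces $\Delta^{\{2,3,4\}}$ and $\Delta^{\{1,2,4\}}$ respectively, each of which is adjoined by filling the $\epsilon$-horn $\ELambda^3_0\hookrightarrow\Sim^3$ (an element of $\eHorn$; see also Theorem~\ref{thm:lifting1});
    \item with $\partial_1\Sim^4$ and $\partial_3\Sim^4$ present, apply $\partial_1\Sim^4\cup\partial_3\Sim^4\hookrightarrow\Sim^4$ to fill $\Sim^4$.
\end{enumerate}
The resulting $\widetilde h\colon\Sim^4\rightarrow X$ restricts on $\partial_4\Sim^4\cong\Delta^3$ to a map $\widehat h$ that agrees with $h$ on $\partial_1\Delta^3\cup\partial_3\Delta^3$ (since this subcomplex lies inside $\partial_4\Sim^4\subseteq\Sim^4$ and $\widetilde h$ extends $h$); thus $\widehat h$ solves the lifting problem.

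The remaining three converses follow by the same recipe with a different placement and orientation: for $I_1\boxslash X\Rightarrow I_4\boxslash X$ one takes $\Delta^3=\partial_0\Sim^4=\Delta^{\{1,2,3,4\}}$ and adjoins the marked edge with target $4$ (so the triangle fillings in step (2) share a \emph{target}); for the two implications out of $I_2$ one places $\Delta^3$ on whichever of $\partial_0\Sim^4,\partial_4\Sim^4$ makes $\Delta^{\{1,2,3\}}$ a face of the given data (so it is already at hand), builds up $\partial_2\Sim^4=\Sim^{\{0,1,3,4\}}$ by one $\ELambda^1$-step, two $\ELambda^2$-steps and one $\eHorn$-completion, and then applies $\partial_2\Sim^4\cup\Delta^{\{1,2,3\}}\hookrightarrow\Sim^4$. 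In each case the final map, restricted to the opposite unmarked face of $\Sim^4$, is the desired filling. The only genuine work is the bookkeeping in steps (3)--(4): one must check, for each placement, that after steps (1)--(2) the relevant codimension-one faces of $\Sim^4$ are missing exactly the $2$-faces that an $\epsilon$-horn supplies --- equivalently, that the residual horn on each such face is a $\bigcup_{i\in I}\partial_i\Sim^3$ with $|I\cap\{0,3\}|=1$ covered by Theorem~\ref{thm:lifting1} --- and that the marked edge is adjoined with the orientation matching the available edges, so the step-(2) fillings are honest $\epsilon$-horns. This is exactly the point where the Frobenius identity is stronger than plain associativity; everything else is formal.
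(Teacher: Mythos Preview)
Your proof is correct and follows essentially the same approach as the paper's. The paper packages the argument more abstractly by reusing the pushout square~\eqref{diag:asoVsFrob} from Lemma~\ref{lem:subtleComparison}: it observes that the left vertical inclusion $u$ lies in $\overline{\eHorn}$ (via Theorem~\ref{thm:lifting1}), so any map from the upper-left corner extends to $\Sim^4$ and then restricts to the desired face. Your steps (1)--(3) are exactly an explicit unpacking of ``$u\in\overline{\eHorn}$'', and your step (4) plus the final restriction are the remaining two moves in the paper's argument. The only cosmetic difference is that the paper demonstrates the case $I_1\boxslash X\Rightarrow I_4\boxslash X$ (using $\partial_0\Sim^4$) while you spell out $I_1\boxslash X\Rightarrow I_3\boxslash X$ (using $\partial_4\Sim^4$); both proofs then declare the other cases analogous.
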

\begin{proof}
Due to Lemma~\ref{lem:subtleComparison}, we only need to prove $I_i \boxslash X\implies I_j\boxslash X$ for $i=1,2$ and $j=3,4$. All the four cases have an analogous proof which we demonstrate on the following case: Assume $I_1\boxslash X$, we will prove $I_4\boxslash X$. Consider the diagram~\eqref{diag:asoVsFrob} from the previous lemma. In the proof of Lemma~\ref{lem:subtleComparison}, we observed that the upper inclusion, let us denote it by $m$, is isomorphic to $\partial_0\Delta^3\cup\partial_2\Delta^3\hookrightarrow\Delta^3$. Hence, it is enough to show $m\boxslash X$. We claim it follows from the fact that the left-hand vertical inclusion in~\eqref{diag:asoVsFrob}, let us denote it by $u$, satisfies 
\begin{equation}\label{eq:u}
    (u\colon\partial_0\Sim^4\cap (\partial_1\Sim^4\cup\partial_3\Sim^4)\hookrightarrow \partial_1\Sim^4\cup\partial_3\Sim^4 )\boxslash X.
\end{equation}
Indeed, if $u\boxslash X$, then any $\theta\colon \partial_0\Sim^4\cap (\partial_1\Sim^4\cup\partial_3\Sim^4)\rightarrow X$ extends by the assumptions to $\hat{\theta}\colon\Sim^4\rightarrow X$. The desired extension arises by restriction of $\hat{\theta}$ to $\partial_0\Sim^4$.

In order to prove~\eqref{eq:u}, we consider the following refinement of $u$:
\begin{align*}
       \partial_0\Sim^4\cap (\partial_1\Sim^4\cup\partial_3\Sim^4)&=\Delta^{\{2,3,4\}}\cup\Delta^{\{1,2,4\}}\subset \Sim^{\{0,2,3,4\}}\cup\Delta^{\{1,2,4\}}\\
       &\subset \Sim^{\{0,2,3,4\}}\cup\Sim^{\{0,1,2,4\}}=\partial_1\Sim^4\cup\partial_3\Sim^4.
\end{align*}
 Both involved inclusions belong to $\overline{\eHorn}$ by Theorem~\ref{thm:lifting1} and Lemma~\ref{lem:pushOut}. This finishes the proof of $I_1 \boxslash X\implies I_4\boxslash X$. 

 One can prove the other implication analogously. In all the cases, it is enough to consider the corresponding variant of diagram~\eqref{diag:asoVsFrob} and show that the left-hand vertical inclusion belongs to $\overline{\eHorn}$.
 \end{proof}
The following theorem says that under the assumption of fillings of all $\epsilon$-horns, the lifting problem $\partial_1\Sim^4\cup\partial_3\Sim^4\subseteq\Sim^4$ faithfully represents 
the two lifting problems given by~\eqref{i} and~\eqref{iii} (which correspond to two sides of the Frobenius identity).
 \begin{theorem}\label{thm:ret}
    The following three sets of morphisms in $\eSSet$ generate the same weakly saturated class: 
    \begin{align}
        J_1&=\eHorn \cup\{j_1\colon \Sim^{\{0,1,2,4\}} \cup \Delta^{\{0,2,3\}}\subset \Sim^4 \},\\
        J_2&=\eHorn \cup\{j_2\colon\Sim^{\{0,2,3,4\}} \cup \Delta^{\{1,2,3\}}\subset \Sim^4 \},\\
        J_3&=\eHorn \cup\{j_3\colon\Sim^{\{0,1,2,4\}} \cup \Sim^{\{0,2,3,4\}}\subset \Sim^4 \}.
    \end{align}
\end{theorem}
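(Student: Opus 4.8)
The strategy is to show the three classes coincide by proving that each of $j_1$, $j_2$, $j_3$ lies in the saturated class generated by the other two together with $\eHorn$; since all three share the $\eHorn$ part, it suffices to produce, for each index, a factorization or retract argument realizing the relevant inclusion inside $\overline{J_k}$ for the other $k$. As in Lemma~\ref{lem:subtleComparison} and Lemma~\ref{lem:equiv}, the workhorse is the combination of Theorem~\ref{thm:lifting1} (inclusions $\bigcup_{i\in I}\partial_i\Sim^n\hookrightarrow\Sim^n$ with $|I\cap\{0,n\}|=1$ lie in $\overline{\eHorn}$) and Lemma~\ref{lem:pushOut} (a face inclusion in $\overline{\eHorn}$ can be pushed out along a union). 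The point is that $j_1$, $j_2$, $j_3$ are all ``co-faces'' of one of the three sides of the Frobenius identity living inside $\Sim^4$, and the three sides are glued to each other along $\epsilon$-horn-type subobjects.

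First I would handle $j_3 \in \overline{J_1}$ and $j_3 \in \overline{J_2}$. Observe that $\Sim^{\{0,1,2,4\}}\cup\Sim^{\{0,2,3,4\}}$ contains $\Sim^{\{0,1,2,4\}}\cup\Delta^{\{0,2,3\}}$ (the latter being the domain of $j_1$), and that the passage
$$\Sim^{\{0,1,2,4\}}\cup\Delta^{\{0,2,3\}}\ \subset\ \Sim^{\{0,1,2,4\}}\cup\Sim^{\{0,2,3,4\}}$$
is itself a pushout of a lower-dimensional inclusion of the form covered by Theorem~\ref{thm:lifting1}: the overlap $(\Sim^{\{0,1,2,4\}}\cup\Delta^{\{0,2,3\}})\cap \Sim^{\{0,2,3,4\}}$ is a union of faces of $\Sim^{\{0,2,3,4\}}\cong\Sim^3$ missing exactly one of the ``extreme'' faces, so it lies in $\overline{\eHorn}$ by Theorem~\ref{thm:lifting1}, hence by Lemma~\ref{lem:pushOut} the displayed inclusion is in $\overline{\eHorn}\subseteq\overline{J_1}$. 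Composing with $j_1$ gives $j_3\in\overline{J_1}$, and symmetrically (swapping the roles of the two ``$\Sim^3$''-faces, i.e.\ of $\{0,1,2,4\}$ and $\{0,2,3,4\}$) one gets $j_3\in\overline{J_2}$.

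For the reverse directions $j_1\in\overline{J_3}$ and $j_2\in\overline{J_3}$ I would use a retract argument. The inclusion $j_3$ adds only the single top-dimensional simplex of $\Sim^4$ (plus whatever $4$-dimensional data, but in $\Sim^4$ there is a unique nondegenerate $4$-simplex) on top of the union of two $\Sim^3$-faces, while $j_1$ adds that same top simplex on top of a smaller subobject $\Sim^{\{0,1,2,4\}}\cup\Delta^{\{0,2,3\}}$. Since $\Delta^{\{0,2,3\}}\subset\Sim^{\{0,2,3,4\}}$ and the inclusion $\Sim^{\{0,1,2,4\}}\cup\Delta^{\{0,2,3\}}\hookrightarrow \Sim^{\{0,1,2,4\}}\cup\Sim^{\{0,2,3,4\}}$ lies in $\overline{\eHorn}$ (same computation as above), the square
$$\begin{tikzcd}
\Sim^{\{0,1,2,4\}}\cup\Delta^{\{0,2,3\}}\arrow[r,hook]\arrow[d,hook,"j_1"]&\Sim^{\{0,1,2,4\}}\cup\Sim^{\{0,2,3,4\}}\arrow[d,hook,"j_3"]\\
\Sim^4\arrow[r,equal]&\Sim^4
\end{tikzcd}$$
exhibits $j_1$ as a retract of a composite of a map in $\overline{\eHorn}$ with $j_3$; more precisely, pushing out $j_1$ along the top map and comparing with $j_3$ shows $j_3$ is the pushout of $j_1$ along that $\overline{\eHorn}$-map, whence $j_1$ is a retract of $j_3$ in the arrow category and therefore $j_1\in\overline{J_3}$. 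The case of $j_2$ is identical after relabeling. Chaining the four containments gives $\overline{J_1}=\overline{J_2}=\overline{J_3}$.

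The main obstacle I anticipate is purely bookkeeping: correctly identifying, in each of the several cases, the overlap of the added $\Sim^3$-face with the already-present subobject as a union $\bigcup_{i\in\tilde I}\partial_i\Sim^3\hookrightarrow\Sim^3$ with $|\tilde I\cap\{0,3\}|=1$, so that Theorem~\ref{thm:lifting1} applies — and being careful that the marked ($\epsilon$-)edge $\Delta^{\{0,4\}}$ of $\Sim^4$ sits where it should in each face, so that the subobjects named with $\Sim^{\{\dots\}}$ versus $\Delta^{\{\dots\}}$ are the intended ones. Once the index combinatorics is checked, every step is an instance of Lemma~\ref{lem:pushOut}, Theorem~\ref{thm:lifting1}, closure under composition, and closure under retracts, exactly as in the proof of Lemma~\ref{lem:subtleComparison}.
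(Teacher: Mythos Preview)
Your proposal confuses the two directions, and the hard one is never proved. In your ``first part'' you factor
\[
\Sim^{\{0,1,2,4\}}\cup\Delta^{\{0,2,3\}}\ \hookrightarrow\ \Sim^{\{0,1,2,4\}}\cup\Sim^{\{0,2,3,4\}}\ \xhookrightarrow{j_3}\ \Sim^4,
\]
and the composite is $j_1$. This shows $j_1\in\overline{J_3}$, not $j_3\in\overline{J_1}$ as you claim. (Your identification of the first inclusion as a pushout of something in $\overline{\eHorn}$ is correct, and this is exactly how the paper proves $j_1\in\overline{J_3}$.) In your ``second part'' you then try to prove $j_1\in\overline{J_3}$ again, so the genuinely nontrivial containment $j_3\in\overline{J_1}$ is never addressed.

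Your pushout/retract claim is also false on its own terms. With $A=\mathrm{dom}(j_1)\subset C=\mathrm{dom}(j_3)\subset B=\Sim^4$, the pushout $B\sqcup_A C$ is \emph{not} $B$: it is $\Sim^4$ with an extra copy of $\Sim^{\{0,2,3,4\}}$ glued along $\Sim^{\{0,2,4\}}\cup\Delta^{\{0,2,3\}}$. So $j_3$ is not a pushout of $j_1$ along the inclusion of domains, and even if it were, a pushout square does not exhibit the original map as a retract. The difficulty is real: since $\mathrm{dom}(j_1)\subsetneq\mathrm{dom}(j_3)$, no combination of pushouts of $j_1$ and $\overline{\eHorn}$-maps staying inside $\Sim^4$ can start at $\mathrm{dom}(j_3)$ and land in $\Sim^4$.

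The paper's fix is to go up a dimension. One replaces $j_3$ by the isomorphic $j_3'\colon\Sim^{\{0,1,2,5\}}\cup\Sim^{\{0,2,3,5\}}\hookrightarrow\Sim^{\{0,1,2,3,5\}}$, builds a chain of inclusions from $\mathrm{dom}(j_3')$ into $\Sim^5\cup\Sim^{\{0,4\}}$ in which one step is a pushout of $j_1$ (the others are in $\overline{\eHorn}$), and then uses the degeneracy $\Sim^5\to\Sim^{\{0,1,2,3,5\}}$ collapsing $4\mapsto 5$ to exhibit $j_3'$ as a retract of that chain. This retraction idea is the missing ingredient in your plan.
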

\begin{proof}
   Let us first prove $\overline{J}_1=\overline{J}_3$. The inclusion $J_1\subseteq \overline{J}_3$ is easy since  $j_1$ splits through $j_3$:
    \begin{equation}
         \Sim^{\{0,1,2,4\} }\cup\Delta^{\{0,2,3\}}\subset \Sim^{\{0,1,2,4\}} \cup \Sim^{\{0,2,3,4\}}\subset \Sim^4,
    \end{equation}
    where the first inclusion belongs to $\overline{\eHorn}$ as it is a pushout of $\Delta^{\{0,2,3\}}\cup\Sim^{\{0,1,4\}}\subset \Sim^{\{0,2,3,4\}}$. To prove reverse inclusion, we show that $j'_3\colon \Sim^{\{0,1,2,5\}}\cup \Sim^{\{0,2,3,5\}}\subset \Sim^{\{0,1,2,3,5\}}$, which is obviously isomorphic to $j_3$, belongs to $\overline{J}_1$. 
    First, we verify the following composition of inclusions belongs to $\overline{J}_1$:
    \begin{align*}
        \Sim^{\{0,1,2,5\}}\cup \Sim^{\{0,2,3,5\}}&\overset{\scriptscriptstyle (1)}{\subset} \Sim^{\{0,1,2,5\}}\cup\Sim^{\{0,1,2,4\}} \cup \Sim^{\{0,2,3,5\}}\\
        &\overset{\scriptscriptstyle (2)}{\subset}\Sim^{\{0,1,2,5\}}\cup\Sim^{\{0,1,2,3,4\}}\cup\Sim^{\{0,2,3,5\}}\\
        &\overset{\scriptscriptstyle (3)}{\subset} \Sim^{\{0,1,2,4,5\}}\cup\Sim^{\{0,1,2,3,4\}}\cup\Sim^{\{0,2,3,5\}}\\
        &\overset{\scriptscriptstyle (4)}{\subset} \Sim^{\{0,1,2,4,5\}}\cup\Sim^{\{0,1,2,3,4\}}\cup\Sim^{\{0,2,3,4,5\}}\\
        &\overset{\scriptscriptstyle (5)}{\subset}\Sim^{\{0,1,2,3,4,5\}}\cup\Sim^{\{0,4\}}=\Sim^5\cup\Sim^{\{0,4\}}.
    \end{align*}
    The first inclusion (1) is a pushout of $\Delta^{\{0,1,2\}}\subset\Sim^{\{0,1,2,4\}} \in\overline{\eHorn}$. The second inclusion (2) is a pushout of $\Sim^{\{0,1,2,4\}}\cup\Delta^{\{0,2,3\}}\subset\Sim^{\{0,1,2,3,4\}}$ which equals $j_1$.  The third inclusion (3) is a pushout of $\Sim^{\{0,1,2,5\}}\cup\Delta^{\{0,1,2,4\}}\subset\Sim^{\{0,1,2,4,5\}}$ which belongs to $\overline{\eHorn}$ by Lemma~\ref{thm:lifting1}. The fourth inclusion (4) is a pushout of $\Sim^{\{0,2,4,5\}}\cup \Sim^{\{0,2,3,4\}}\cup \Sim^{\{0,2,3,5\}}\subset\Sim^{\{0,2,3,4,5\}}$ which belongs to $\overline{\eHorn}$ by Lemma~\ref{thm:lifting1}. The last inclusion (5) is a pushout of $\partial_3\Sim^5\cup\partial_5\Sim^5\cup\partial_1\Sim^5\subset\Sim^5$.

    The inclusion $j'_3$ in concern is a retract of the one above:
    \begin{equation*}\label{diag:retract}
    \begin{tikzcd}
        {\Sim^{\{0,1,2,5\}}\cup \Sim^{\{0,2,3,5\}}}\arrow[r,"="]\arrow[d,hook,"j'_3"]&{\Sim^{\{0,1,2,5\}}\cup \Sim^{\{0,2,3,5\}}}\arrow[d,hook]\arrow[r,"="]&{\Sim^{\{0,1,2,5\}}\cup \Sim^{\{0,2,3,5\}}}\arrow[d,hook,"j'_3"]\\
        {\Sim^{\{0,1,2,3,5\}}}\arrow[r,hook]&{\Sim^{\{0,1,2,3,4,5\}}\cup\Sim^{\{0,4\}}}\arrow[r,two heads,"\pi"]&{\Sim^{\{0,1,2,3,5\}}}
    \end{tikzcd}
\end{equation*}
The morphism $\pi$ is the unique one which acts on vertices as follows: $4\mapsto 5$ and for $i\not= 4$, $i\mapsto i$ (see that the $\epsilon$-edge $\Sim^{\{0,4\}}$ goes to the $\epsilon$-edge $\Sim^{\{0,5\}}$).

The case $\overline{J}_2=\overline{J}_3$ is established analogously. Note that if we reverse the orientation of simplices, i.e., we permute indices in each $\Delta^n$ as $i\mapsto n-i$, then the generators of $J_1$ translate to the generators of $J_2$ and vice versa. While the generating set of $J_3$ is invariant under reorientation.
\end{proof}

\section{Characterization of Frobenius algebras}
In this section, we provide a characterization of the image of $N\colon\RelFrob\rightarrow\eSSet$ by means of lifting properties.
\begin{theorem}\label{thm:frobenius}
    Let $X$ be an $\epsilon$-simplicial set which has the extension property w.r.t. all $\epsilon$-horns. 
    The relation $\mu$ and $\delta$ from Definition~\ref{def:QuasiRel} and~\ref{def:deltaForSim} satisfy the Frobenius identity~\eqref{eq:frobId} whenever $X$ has the extension property w.r.t.
    \begin{align}\label{eq:frobLift}
        \partial_1\Sim^4\cup\partial_3\Sim^4\hookrightarrow\Sim^4.
    \end{align}
\end{theorem}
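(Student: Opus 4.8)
The plan is to deduce the Frobenius identity from the fact that $X$ has the extension property with respect to each of the three inclusions \eqref{i}, \eqref{ii}, \eqref{iii} (each of which encodes one of the three composites in \eqref{eq:frobId}), followed by an ``extend and restrict'' argument inside $\Sim^4$. Throughout, write $R_1,R_2,R_3$ for the left-hand side, the middle term $\delta\circ\mu$, and the right-hand side of \eqref{eq:frobId}, viewed as relations on the edges of $X$ through Definitions~\ref{def:QuasiRel} and~\ref{def:deltaForSim}, and write $D_1,D_2,D_3\subseteq\Sim^4$ for the sub-$\epsilon$-simplicial sets appearing on the left of \eqref{i}, \eqref{ii}, \eqref{iii}.

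First I would collect the three extension properties. Since $\partial_1\Sim^4\cup\partial_3\Sim^4=\Sim^{\{0,2,3,4\}}\cup\Sim^{\{0,1,2,4\}}$, the hypothesis that $X$ lifts against all $\epsilon$-horns and against \eqref{eq:frobLift} is exactly the assertion that $X$ lifts against the set $J_3$ from Theorem~\ref{thm:ret}, equivalently against the set $I_1$ of Lemma~\ref{lem:subtleComparison}. By Theorem~\ref{thm:ret} together with \eqref{eq:sol} this gives that $X$ lifts against $J_1$, hence against $D_1$; running the same argument with $\Sim^4$ reoriented by $i\mapsto 4-i$ (which fixes the inclusion \eqref{eq:frobLift} and carries $D_1$ to $D_3$) yields the extension property against $D_3$ as well. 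By Lemma~\ref{lem:equiv} the hypothesis also gives that $X$ lifts against the set $I_2$ of Lemma~\ref{lem:subtleComparison}, whose extra generator is $\partial_2\Sim^4\cup\Delta^{\{1,2,3\}}\subseteq\Sim^4$; as $\partial_2\Sim^4=\Sim^{\{0,1,3,4\}}$, this inclusion is precisely $D_2\subseteq\Sim^4$. Thus $X$ has the extension property with respect to all three of \eqref{i}, \eqref{ii}, \eqref{iii}.

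Next I would translate the sub-complexes back into algebra. Each $D_i$ is a $3$-simplex glued to a $2$-simplex along a single edge; by Definition~\ref{def:deltaForSim} the $3$-simplex witnesses a $\delta$-relation, by Definition~\ref{def:QuasiRel} the $2$-simplex witnesses a $\mu$-relation, and the gluing edge plays the role of the intermediate wire of the composite $R_i$. A direct inspection of the labelling of $\Sim^4$ shows that the five edges common to $D_1$, $D_2$, $D_3$ are the marked edge $\Delta^{\{0,4\}}$ together with $\Delta^{\{2,3\}}$, $\Delta^{\{1,2\}}$, $\Delta^{\{0,3\}}$, $\Delta^{\{1,4\}}$, and that for a fixed choice of these five edges a morphism $D_1\to X$ extending it exists if and only if the corresponding quadruple of edges lies in $R_1$; likewise $D_2$ corresponds to $R_2$ and $D_3$ to $R_3$, with \emph{the same} quadruple in each case, the remaining interior edges of the $D_i$ being the three distinct intermediate wires of the three composites. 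This matching of the string diagrams of \eqref{eq:frobId} against the combinatorics of $\Sim^4$ is the only non-formal step, and it is the part I expect to be the main obstacle.

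Finally I would run the extend-and-restrict step. Given a quadruple in $R_1$, realize it as a morphism $D_1\to X$ over the five common edges, extend it to $\sigma\colon\Sim^4\to X$ using the extension property for \eqref{i}, and restrict $\sigma$ to $D_2$ and to $D_3$; since the common edges are left untouched, this exhibits the same quadruple in $R_2$ and in $R_3$, so $R_1\subseteq R_2\cap R_3$. The identical argument applied to the extension properties for \eqref{ii} and for \eqref{iii} gives $R_2\subseteq R_1\cap R_3$ and $R_3\subseteq R_1\cap R_2$; hence $R_1=R_2=R_3$, which is the Frobenius identity \eqref{eq:frobId}. (Any higher cells of $\Sim^4$ that the extensions require are determined by their $2$-skeleta via Theorem~\ref{thm:2coskeletal}, so no further lifting data is needed.)
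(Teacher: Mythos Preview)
Your proposal is correct and follows the same route as the paper: establish the extension property against each of \eqref{i}, \eqref{ii}, \eqref{iii} via Theorem~\ref{thm:ret} and Lemma~\ref{lem:equiv}, then fill to all of $\Sim^4$ and restrict to obtain the other two configurations. One minor slip: the closing parenthetical invoking Theorem~\ref{thm:2coskeletal} is inapplicable here (that result concerns $N(\mathcal{M})$, whereas $X$ is not assumed $2$-coskeletal), but it is also unnecessary, since the extension to $\Sim^4$ is already guaranteed by the lifting property you have established.
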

\begin{proof}
Assume that $X$ has the extension property w.r.t.~\eqref{eq:frobLift}. As discussed at the end of Section~\ref{sec:eHorn}, we need to prove that for each quadruple $a,b,c,d\in X_1$,  whenever $X$ has an $\epsilon$-simplicial subset of either of cases (\ref{i}--\ref{iii}), then all of them occur in $X$. 
We show that in each of the three cases we can fill the whole $4$-simplex. Cases (i) and (iii) follow from Theorem~\ref{thm:ret}. 
We can handle case (ii) using Lemma~\ref{lem:equiv}. It follows from the equivalence $I_1\boxslash X\Leftrightarrow I_2\boxslash X$.
\end{proof}

The following result is analogous to the one characterizing quasi-categories which arise as a result of the nerve functor $N\colon\mathrm{Cat}\rightarrow \SSet$.
\begin{theorem}\label{thm:char}
An $\epsilon$-simplicial set $X$ is of the form $X\cong N(\mathcal{F})$ for some Frobenius algebra $\mathcal{F}$ in $\Rel$ if and only if $X$ has the unique extension property w.r.t.    \begin{enumerate}[(i)]
        \item $\ELambda^n_0\subset \Sim^n$ and $\ELambda^n_n\subset \Sim^n$ for $n\geq 1$ ($\epsilon$-horns),
        \item $\partial\Delta^n\subset \Delta^n$ for $n\geq 3$ ($2$-coskeletality),
    \end{enumerate}
    and $X$ has the extension property w.r.t.
    \begin{enumerate}
        \item[(iii)]     $\partial_1\Sim^4\cup\partial_3\Sim^4\hookrightarrow\Sim^4$ (Frobenius identity).
    \end{enumerate}
\end{theorem}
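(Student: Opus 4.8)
\emph{Plan.} The statement is an equivalence, and I would prove the two implications separately; the forward (``only if'') direction is short, while the converse carries essentially all of the work.

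For the forward implication, assume $X\cong N(\mathcal{F})$ for a Frobenius algebra $\mathcal{F}$ in $\Rel$. Then condition (i) is exactly Theorem~\ref{thm:epsHorns} and condition (ii) is Theorem~\ref{thm:2coskeletal}. For (iii), observe that $\partial_1\Sim^4\cup\partial_3\Sim^4\hookrightarrow\Sim^4$ is precisely the generator $j_3$ of the set $J_3$ in Theorem~\ref{thm:ret}, since $\partial_1\Sim^4=\Sim^{\{0,2,3,4\}}$ and $\partial_3\Sim^4=\Sim^{\{0,1,2,4\}}$. As $j_3\in J_3\subseteq\overline{J}_3=\overline{J}_1$, the equivalence~\eqref{eq:sol} reduces (iii) to the statement $J_1\ \boxslash\ N(\mathcal{F})$: the $\epsilon$-horn part of $J_1$ is again Theorem~\ref{thm:epsHorns}, and for the generator $j_1\colon\Sim^{\{0,1,2,4\}}\cup\Delta^{\{0,2,3\}}\subset\Sim^4$ one uses that $\mu$ and $\delta$ of Definitions~\ref{def:QuasiRel} and~\ref{def:deltaForSim} agree with those of $\mathcal{F}$: by the translation at the end of Section~\ref{sec:eHorn}, a morphism out of this sub-$\epsilon$-simplicial set is an instance of one side of the Frobenius identity $(\mu,\mathrm{id})\circ(\mathrm{id},\delta)=\delta\circ\mu$ for $\mathcal{F}$, the identity produces the missing faces, and $2$-coskeletality completes the $4$-simplex.

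For the converse, assume $X$ satisfies (i)--(iii); I would construct a Frobenius algebra $\mathcal{F}$ with $N(\mathcal{F})\cong X$. The $n=1$ instances of (i) first force $X(w)\colon X_\epsilon\to X_1$ to be injective, so the marked edges form a subset $\epsilon\subseteq X_1$, and every vertex is the source of a unique marked edge and the target of a unique marked edge. Put $F:=X_1$, let $\eta\subseteq F$ be the degenerate edges and $\epsilon$ as above, and let $\mu,\delta$ be the relations of Definitions~\ref{def:QuasiRel} and~\ref{def:deltaForSim}. The uniqueness clause of (i) at $n=2$ upgrades $\hat\alpha,\hat\beta$ to honest maps $F\to F$, and its uniqueness clauses at $n=2,3$ show they are mutually inverse bijections; together with the degeneracies $s^0_0,s^1_0,s^1_1$ this gives the unit law for $(F;\mu,\eta)$ and the counit law for $(F;\delta,\epsilon)$. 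The Frobenius identity for $\mu,\delta$ is then exactly the conclusion of Theorem~\ref{thm:frobenius}, whose hypotheses --- fillings of all $\epsilon$-horns, and the extension property with respect to $\partial_1\Sim^4\cup\partial_3\Sim^4\hookrightarrow\Sim^4$ --- are (i) and (iii); Remark~\ref{rem:Asoc} then supplies associativity and co-associativity, so $\mathcal{F}:=(F;\mu,\eta,\delta,\epsilon)$ is a Frobenius algebra in $\Rel$. Finally, the evident comparison morphism $N(\mathcal{F})\to X$ --- the identity on $1$- and $\epsilon$-simplices, the canonical bijection $\eta\cong X_0$ on vertices, a witnessed triple sent to the $2$-simplex of $X$ with that boundary, and extended by coskeletality in higher degrees --- is checked degreewise to be an isomorphism: bijectivity in degrees $\le 1$ and on $X_\epsilon$ is immediate, in degree $2$ it is the uniqueness of a $2$-simplex with prescribed boundary (the ``moreover'' clause of $2$-coskeletality contained in (ii)), and in degrees $\ge 3$ it follows because both $N(\mathcal{F})$ and $X$ satisfy (ii).

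The main difficulty is the converse direction, and within it the verification that $\mu$ and $\delta$ --- defined on an abstract $X$ only through the presence or absence of certain simplices --- genuinely obey the unit and counit axioms. For $N(\mathcal{F})$ these hold trivially, but for a general $X$ the relevant uniqueness facts (that $\hat\alpha,\hat\beta$ are well-defined mutually inverse bijections, and that units and counits absorb as required) must be squeezed out of the \emph{uniqueness} of $\epsilon$-horn fillings together with $2$-coskeletality, via a series of somewhat delicate horn-filling diagram chases. Once those structural facts are secured, everything else is either a citation (Theorems~\ref{thm:epsHorns}, \ref{thm:2coskeletal}, \ref{thm:ret}, \ref{thm:frobenius}; Remark~\ref{rem:Asoc}) or routine degreewise bookkeeping.
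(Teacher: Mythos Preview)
Your proposal is correct and follows the paper's strategy closely: cite Theorems~\ref{thm:epsHorns} and~\ref{thm:2coskeletal} for the forward direction, build $\mathcal{F}$ on $X_1$ for the converse, invoke Theorem~\ref{thm:frobenius} and Remark~\ref{rem:Asoc}, and finish with a degreewise comparison using $2$-coskeletality.

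One point deserves care. Your justification for bijectivity in degree $2$ appeals to ``the `moreover' clause of $2$-coskeletality contained in (ii)'', but condition (ii) of the present theorem covers only $n\geq 3$; the ``moreover'' clause of Theorem~\ref{thm:2coskeletal} is a property of $N(\mathcal{M})$, not a hypothesis on a general $X$. To conclude that two $2$-simplices of $X$ with the same boundary coincide you must derive it from (i) and (ii) together: extend one of them to $\Sim^3$ via $\partial_3\Sim^3\hookrightarrow\Sim^3\in\overline{\eHorn}$, use (ii) at $n=3$ to fill the full boundary containing the other, and then invoke the uniqueness in (i) for $\ELambda^3_3$. The paper is admittedly terse here as well, simply asserting $X_2\cong N(\mathcal{F})_2$ ``by the very construction''. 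A second, purely cosmetic difference: for (iii) in the forward direction the paper uses associativity of $\mathcal{F}$ (Example~\ref{ex:assoc}) together with Lemma~\ref{lem:equiv}, rather than your route through $\overline{J}_3=\overline{J}_1$ in Theorem~\ref{thm:ret}; both are valid.
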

\begin{proof}
    The right-to-left implication is easy. Condition (i) follows from Theorem~\ref{thm:epsHorns}, $2$-coskeletality (ii) holds by Theorem~\ref{thm:2coskeletal}. Since each Frobenius algebra $\mathcal{F}$ is associative, we have $(\partial_1\Delta^3\cup\partial_3\Delta^3\hookrightarrow \Delta^3)\boxslash N(\mathcal{F})$ (see Example~\ref{ex:assoc}). Thus, the condition (iii) follows from Lemma~\ref{lem:equiv}. 

    Assume $X$ satisfies the lifting properties (i--iii). We will find a Frobenius algebra $\mathcal{F}=(X_1;\mu,\eta,\delta,\mu)$ on the set of edges of $X$ such that $N(\mathcal{F})\cong X$. We define $\mu$ and $\delta$ according to Definition~\ref{def:QuasiRel} and Definition~\ref{def:deltaForSim}. Next, let $\eta$ contain all degenerate edges and $\epsilon$ contain all $\epsilon$-edges. By Theorem~\ref{thm:frobenius}, $\mathcal{F}$ satisfies the Frobenius identity. 
Hence, it remains to prove the unit and counit rules (we obtain associativity and coassociativity as a consequence; see Remark~\ref{rem:Asoc}).    
    First, observe that applying the assumptions to $\ELambda^1_i\subset \Sim^1$, for $i=0,1$, we get there is a unique $\epsilon$-edge with a given source (target, resp.) and each $\epsilon$-edge is witnessed by a unique element in $X_\epsilon$. 
Similarly, we observe that the relations $\hat{\alpha}$ and $\hat{\beta}$ established in Section~\ref{sec:eHorn} are (due to uniqueness) bijections $X_1\rightarrow X_1$.

    Let us prove that $(X_1;\mu,\eta)$ satisfies the unit rule (Definition~\ref{def:monoid}, (ii)). 
Take any $a\in X_1$. Its target $t(a):=s^0_0\circ d^1_0(a)$ and source $s(a):=s^0_0\circ d^1_1(a)$ are degenerate edges (hence elements of $\eta$) and clearly fit in $\mu\colon (t(a),a)\dashmapsto a$ and $\mu\colon(a,s(a))\dashmapsto a$, respectively.
    
Next, assume $a,b\in X_1$ and let $r\in s^0_0(X_0)$ satisfy $\mu\colon(r,b)\dashmapsto a$ ($\mu\colon(b,r)\dashmapsto a$, respectively). We will prove $b=a$.
Note that $r=s_0^0\circ d^1_0(r)=s_0^0\circ d^1_0(a)=t(a)$ ($r=s(a)$, respectively).

Let $e_1$ ($e_2$, resp.) be the unique $\epsilon$-edge with $t(e_1)=t(a)$ ($s(e_2)=s(a)$, resp.). We have the following $\epsilon$-horn $\ELambda^3_3$ ($\ELambda^3_0$, resp.) in $X$:
\begin{align}\label{diag:3simWit}
\begin{tikzcd}[row sep={2.4em,between origins}, column sep={3.3em,between origins}, ampersand replacement=\&]
    \&
    {3}
    \&\\[10pt]\&
    {2}\arrow[u,"t(a)"{description}]
    \&\\
    {0}
    \arrow[ruu,LA=1.5pt,"{e_1}"{description}, bend left=5]
    \arrow[rr,"\hat{\alpha}(a)"{description}  bend right=5]
    \arrow[ru, LA=1.5pt,"{e_1}"{description}]
    \&\&
    {1}\arrow[uul,"{a}"{description}, bend right=5]\arrow[lu, "b"'{description}]
\end{tikzcd}\ \ \ \text{respective}\ \ \ 
          \begin{tikzcd}[row sep={2.4em,between origins}, column sep={3.3em,between origins}, ampersand replacement=\&]
    \&
    {3}
    \&\\[10pt]\&
    {2}\arrow[u,"\hat{\beta}(a)"{description}]
    \&\\
    {0}
    \arrow[ruu,LA=1.5pt, bend left=5,"e_2"{description}]
    \arrow[rr,"s(a)"{description}  bend right=5]
    \arrow[ru,"a"{description}]
    \&\&
    {1}\arrow[uul,LA=1.5pt,"e_2"{description}, bend right=5]\arrow[lu, "b"'{description,name=f13}]
\end{tikzcd}
    \end{align}
By the assumptions, horns in~\eqref{diag:3simWit} admit fillings. The so obtained face $\partial_3\Delta^3$ ($\partial_0\Delta^3$, resp.) witnesses $\hat{\alpha}(b)=\hat{\alpha}(a)$ ($\hat{\beta}(b)=\hat{\beta}(a)$, resp.). However, we have already observed $\hat{\alpha}$ ($\hat{\beta}$, resp.) is bijective, this gives us $a=b$.

Finally, we verify the counit rule. First, assume any $a\in X_1$ and let $e_1$ ($e_2$, resp.) be the unique $\epsilon$-edge satisfying $s(a)=s(e_1)$ ($t(a)=t(e_2)$, resp.). Using the assumed lifting properties, it is easy to show that in $X$, we have the following full (degenerate) $3$-simplices:
 \begin{align}\label{eq:witOfCounit}
          \begin{tikzcd}[row sep={2.4em,between origins}, column sep={3.3em,between origins}, ampersand replacement=\&]
    \&
    {3}
    \&\\[10pt]\&
    {2}\arrow[u,"t(a)"{description}]
    \&\\
    {0}
    \arrow[ruu,LA=1.5pt,"{e_1}"{description}, bend left=5]
    \arrow[rr,"\hat{\alpha}(a)"{description}  bend right=5]
    \arrow[ru, LA=1.5pt,"{e_1}"{description}]
    \&\&
    {1}\arrow[uul,"{a}"{description}, bend right=5]\arrow[lu, "a"'{description}]
\end{tikzcd}\ \ \ \text{respective}\ \ \ 
          \begin{tikzcd}[row sep={2.4em,between origins}, column sep={3.3em,between origins}, ampersand replacement=\&]
    \&
    {3}
    \&\\[10pt]\&
    {2}\arrow[u,"\hat{\beta}(a)"{description}]
    \&\\
    {0}
    \arrow[ruu,LA=1.5pt, bend left=5,"e_2"{description}]
    \arrow[rr,"s(a)"{description}  bend right=5]
    \arrow[ru,"a"{description}]
    \&\&
    {1}\arrow[uul,LA=1.5pt,"e_2"{description}, bend right=5]\arrow[lu, "a"'{description,name=f13}]
\end{tikzcd}
    \end{align}
Simplices~\eqref{eq:witOfCounit} witness $\delta\colon (e_1,a)\dashmapsto a$ ($\delta\colon (a,e_2)\dashmapsto a$, resp.). Next assume $a,b\in X_1$ such that there is an $\epsilon$-edge $e_1$ ($e_2$, resp.) so that $\delta\colon (e_1,a)\dashmapsto b$ ($\delta\colon (a,e_2)\dashmapsto b$). This must be witnessed by a $3$-simplex of form~\eqref{diag:3simWit} left (right, resp.). Therefore, $a=b$ by the already verified unit rule.  

It remains to show $X\cong N(\mathcal{F})$. 
As both $\epsilon$-simplicial sets are $2$-coskeletal, it is enough to describe the isomorphism on simplices of dimension up to $2$.
By the very construction of $\mathcal{F}$, there are isomorphisms $X_0\cong N(F)_0$, $X_1\cong N(F)_1$, and $X_2\cong N(F)_2$ which are compatible with degeneracy and face maps. Moreover, since each $\epsilon$-edge in $X$ has a unique witness in $X_\epsilon$, there is an isomorphism $N(\mathcal{F})_\epsilon=\epsilon\cong X_\epsilon$.
\end{proof}

    In Theorem~\ref{thm:char}, we may leave the uniqueness condition in (ii). Assume that we have two $n$-simplices $x,y\colon \Delta^n\rightarrow X$, $n\geq 3$, which coincide at the boundary $\partial\Delta^n$. Using Theorem~\ref{thm:lifting1}, we may extend $x$ to $\hat{x}\colon\Sim^{n+1}\rightarrow X$, so that $x$ coincides with the restriction of $\hat{x}$ on $\partial_{n+1}\Sim^{n+1}$. Denote by $z\colon\ELambda^{n+1}_{n+1}\rightarrow X$ the restriction of $\hat{x}$. Now both $z\cup x$ and $z\cup y$ define a mapping $\partial\Sim^{n+1}\rightarrow X$. Both of these extend to whole $\Sim^{n+1}\rightarrow X$ by (ii). However, the two obtained simplices are extensions of the $\epsilon$-horn $z$. So $x=y$ by the uniqueness condition in (i).
\section{Future work}
The main aim is to define a convenient model structure on $\epsilon$-simplicial sets, which would represent Frobenius algebras as fibrant objects. Such a model structure would provide a robust framework for studying higher Frobenius algebras.

The nerve functor $N\colon\RelFrob\rightarrow \eSSet$ admits a left adjoint. In the case of categories, we have an explicit description of the left adjoint on fibrant objects, given by the Boardman–Vogt construction. The author intends to describe an analogous construction for Frobenius algebras which, in particular, generalizes the construction of an effect algebra from a test space (the Foulis–Bennett construction).

For two groupoids $G$ and $H$, it is well-known that $\mathrm{Hom}(G,H)$ with natural transformations can be organized as a groupoid. Similarly, for two effect algebras $E$ and $F$, the functor complex $\mathrm{Fun}(N(E), N(F))$ is isomorphic to the nerve of a Frobenius algebra. This observation raises a natural question: does this isomorphism hold more generally for arbitrary Frobenius algebras in $\Rel$? An affirmative result would enable $\RelFrob$ to be treated as a category enriched over itself and obtain a new structural understanding of Frobenius algebras.

Furthermore, it is desirable to investigate which lifting properties are satisfied by the functor complex $\mathrm{Fun}(X,Y)$ for convenient $\epsilon$-simplicial sets $X,Y$. 
The aim is to internalize the lifting calculus and prove results analogous to those of A. Quillen on quasi-categories.
\section*{Declarations}
\subsection*{Acknowledgments:} The author acknowledges the support by the Czech Science Foundation (GAČR): project
24-14386L.
\subsection*{Conflict of nterests:}
The author declares that he has no conflict of interest.

\bibliography{bibliography}

\begin{thebibliography}{CKM22}

\bibitem[CKM22]{CKM}
I.~Contreras, M.~Keller, and R.~A. Mehta.
\newblock Frobenius objects in the category of spans.
\newblock {\em Reviews in Mathematical Physics}, 34(10), 2022.

\bibitem[CMS25]{CMS}
I.~Contreras, R.~A. Mehta, and W.~H. Stern.
\newblock Frobenius and commutative pseudomonoids in the bicategory of spans.
\newblock {\em Journal of Geometry and Physics}, 207, 2025.

\bibitem[Fou89]{F}
D.~J. Foulis.
\newblock Coupled physical systems.
\newblock {\em Foundations of Physics}, 19:905--922, 1989.

\bibitem[FPR83]{FPR}
D.~Foulis, C.~Piron, and C.~Randall.
\newblock Realism, operationalism, and quantum mechanics.
\newblock {\em Found. Phys.}, 13:813--841, 1983.

\bibitem[HCC13]{CCH}
C.~Heunen, I.~Contreras, and A.~S. Cattaneo.
\newblock Relative frobenius algebras are groupoids.
\newblock {\em Journal of Pure and Applied Algebra}, 217(1):114--124, 2013.

\bibitem[Hov98]{MH}
M.~Hovey.
\newblock {\em Model Categories}.
\newblock American Mathematical Society, Michigan, 1998.

\bibitem[HV20]{HV}
C.~Heunen and J.~Vicary.
\newblock {\em Categories for quantum theory}.
\newblock Oxford University Press, Oxford, 2020.

\bibitem[Jen18]{Je}
G.~Jen\v{c}a.
\newblock Effect algebras as presheaves on finite boolean algebras.
\newblock {\em Order}, 35:525–540, 2018.

\bibitem[Koc03]{JK}
J.~Kock.
\newblock {\em Frobenius algebras and 2D topological quantum field theories}.
\newblock Cambridge University Press, Cambridge, 2003.

\bibitem[Lur09]{L}
J.~Lurie.
\newblock {\em Higher Topos Theory}.
\newblock Princeton University Press, Princeton, 2009.

\bibitem[MZ20]{MZ}
R.~A. Metha and R.~Zhang.
\newblock Frobenius objects in the category of relations.
\newblock {\em Lett Math Phys}, 110:1941–1959, 2020.

\bibitem[PS16]{PS}
D.~Pavlovic and P.~M. Seidel.
\newblock (modular) effect algebras are equivalent to (frobenius) antispecial algebras, 2016.

\bibitem[Rou16]{R}
F.~Roumen.
\newblock Cohomology of effect algebras, 2016.

\bibitem[SU18]{SU}
S.~Staton and S.~Uijlen.
\newblock Effect algebras, presheaves, non-locality and contextuality.
\newblock {\em Information and Computation}, 261:336--354, 2018.

\end{thebibliography}
\end{document}